\newtheorem{theorem}{Theorem}[section]
\newtheorem{proposition}[theorem]{Proposition}
\newtheorem{lemma}[theorem]{Lemma}
\theoremstyle{definition}
\newtheorem{definition}[theorem]{Definition}
\theoremstyle{remark}
\newtheorem{remark}[theorem]{Remark}
\theoremstyle{plain}
\newcommand{\thistheoremname}{}
\newtheorem{genericthm}[theorem]{\thistheoremname}
\newtheorem*{genericthm*}{\thistheoremname}
\newenvironment{namedthm*}[1]
  {\renewcommand{\thistheoremname}{#1}%
   \begin{genericthm*}}
  {\end{genericthm*}}
\newcommand\cC{\mathcal{C}}
\newcommand\cE{\mathcal{E}}
\newcommand\cL{\mathcal{L}}
\newcommand\cM{\mathcal{M}}
\newcommand\cX{\mathcal{X}}
\newcommand{\bN}{\mathbb{N}}
\newcommand{\bP}{\mathbb{P}}
\newcommand{\bR}{\mathbb{R}}
\newcommand{\bX}{\mathbb{X}}
\newcommand{\bZ}{\mathbb{Z}}
\newcommand{\bCP}{\mathbb{CP}}
\newcommand\bm{\mathbf{m}} 
\newcommand\bn{\mathbf{n}}
\newcommand\bmu{\boldsymbol{\mu}}
\newcommand{\sC}{\mathscr{C}}
\newcommand{\sO}{\mathscr{O}}
\newcommand{\on}{\operatorname}
\newcommand{\codim}{\on{codim}}
\newcommand\pt{\on{pt}}
\newcommand\id{\on{id}}
\newcommand{\im}{\on{im}}
\newcommand{\Fuk}{\on{Fuk}}
\newcommand{\Symp}{\mathsf{Symp}}
\newcommand{\Top}{\textsf{Top}}
\newcommand{\Ob}{\on{Ob}}
\newcommand{\Mor}{\on{Mor}}
\newcommand{\cell}{{\on{cell}}}
\newcommand{\lin}{{\on{lin}}}
\newcommand{\ul}{\underline}
\newcommand{\sr}{\stackrel}
\newcommand{\wt}{\widetilde}
\newcommand{\smotimes}{{\scriptstyle\otimes}}
\def\rT{{\rm T}}
\def\rd{{\rm d}}
\newcommand\quotient[2]{
        \mathchoice
            {
                \text{\raise1ex\hbox{$#1$}\Big/\lower1ex\hbox{$#2$}}
            }
            {
                 \text{\raise1ex\hbox{$#1$}\Big/\lower1ex\hbox{$\scriptstyle #2$}}
            }
            {
                #1\,/\,#2
            }
            {
                #1\,/\,#2
            }
    }
\let\@cleartopmattertags\relax
\newcommand\articleend{%
  \enddoc@text
  \let\authors\@empty
  \let\shortauthors\@empty
  \let\contribs\@empty
  \let\xcontribs\@empty
  \let\toccontribs\@empty
  \let\addresses\@empty
  \let\thankses\@empty
  \newpage
 }
\let\@wraptoccontribs\wraptoccontribs
\newenvironment{itemlist}
   {
      \begin{list}
         {$\bullet$}
         {
            \setlength{\itemsep}{0.5ex}
            \setlength{\parsep}{0ex}
            \setlength{\leftmargin}{1em}
            \setlength{\parskip}{0ex}
            \setlength{\topsep}{0.5ex}
         }
   }
   {
      \end{list}
   }
\begin{document}

\begin{abstract}
This paper provides a blueprint for the construction of a symplectic $(A_\infty,2)$-category, \textsf{Symp}.
We develop two ways of encoding the information in \textsf{Symp} -- one topological, one algebraic.
The topological encoding is as an \emph{$(A_\infty,2)$-flow category}, which we define here.
The algebraic encoding is as a linear $(A_\infty,2)$-category, which we extract from the topological encoding.
In upcoming work, we plan to use the adiabatic Fredholm theory developed in \cite{bottman_wehrheim:adiabatic} to construct $\Symp$ as an $(A_\infty,2)$-flow category, which thus induces a linear $(A_\infty,2)$-category.

The notion of a linear $(A_\infty,2)$-category developed here goes beyond
the proposal of \cite{bottman_carmeli}.
The recursive structure of the 2-associahedra identifies faces with fiber products of 2-associahedra over associahedra, which led \cite{bottman_carmeli}
to associate operations to singular chains on 2-associahedra.
The innovation in our new definition of linear $(A_\infty,2)$-category is to extend the family of 2-associahedra to include all fiber products of 2-associahedra over associahedra.
This allows us to associate operations to cellular chains, which in particular enables us to produce a definition that involves only one operation in each arity, governed by a collection of \emph{$(A_\infty,2)$-equations}.
\end{abstract}

\title{Foundations of $(A_\infty,2)$-categories: from flow to linear}
\author{Nathaniel Bottman, Katrin Wehrheim}

\maketitle

\section{Introduction}
\label{sec:intro}

A linear $A_\infty$-category is a category over the operad $C_*^\cell(K) \coloneqq (C_*^\cell(K_r))_{r\geq 1}$, where $K_r$ is the $(r-2)$-dimensional associahedron.
In particular, there is an $r$-ary operation on morphisms associated to each cellular chain in $K_r$, and these operations satisfy coherences expressed by the commutativity of the squares
\begin{align}
\label{eq:A-infty_coherence}
\xymatrix{
C_*^\cell(K_{r-s+1})\otimes C_*^\cell(K_s)\otimes\Mor(X_0,X_1)\otimes\cdots\otimes\Mor(X_{r-1},X_r) \ar[r] \ar[d] & C_*^\cell(K_r)\otimes\cdots \ar[d] \\
C_*^\cell(K_{r-s+1})\otimes\cdots\otimes\Mor(X_i,X_{i+s})\otimes\cdots \ar[r] & \Mor(X_0,X_r).
}
\end{align}
The left vertical arrow indicates performing the $s$-ary composition
\begin{align}
C_*^\cell(K_s)\otimes\Mor(X_i,X_{i+1})\otimes\cdots\otimes\Mor(X_{i+s-1},X_{i+s})
\to
\Mor(X_i,X_{i+s}),
\end{align}
while the upper horizontal arrow indicates performing operadic composition in $C_*^\cell(K)$.
Specifically, the upper horizontal arrow uses the composition
\begin{align}
C_*^\cell(K_{r-s+1})\otimes C_*^\cell(K_s)
\to
C_*^\cell(K_{r-s+1}\times K_s)
\to
C_*^\cell(K_r),
\end{align}
where the first arrow is the Eilenberg--Zilber map and the second map uses the identification of the faces of the associahedra with 
Cartesian products of smaller associahedra.

In \cite{bottman_carmeli}, the first author and Carmeli proposed a definition of linear $(A_\infty,2)$-categories, with an eye toward defining $\Symp$, the Symplectic $(A_\infty,2)$-Category\footnote{See \S\ref{ssec:symp} or \cite[\S4]{abouzaid_bottman} for an overview of $\Symp$.}.
The idea of $\Symp$ is that objects are symplectic manifolds, that $\Mor(M_1,M_2)$ is defined to be the Fukaya category $M_1^-\times M_2$, and that there are composition maps on 2-morphisms associated to chains in 2-associahedra.
Therefore \cite{bottman_carmeli} aimed to define linear $(A_\infty,2)$-categories analogously to linear $A_\infty$-categories, with 2-associahedra -- introduced in \cite{bottman_2-associahedra,bottman_realizations} -- playing the role of associahedra.

As established in \cite[Thm.\ 4.1]{bottman_2-associahedra}, faces of 2-associahedra can be identified with
Cartesian products of \emph{fiber products} of 2-associahedra, with respect to the forgetful maps from 2-associahedra to associahedra.
Therefore \cite{bottman_carmeli} was unable to adapt the square \eqref{eq:A-infty_coherence} to the 2-associahedra; specifically, the issue is with the top arrow.
Instead, they used a version of \eqref{eq:A-infty_coherence} that used the Alexander--Whitney map, rather than the Eilenberg--Zilber map.
Furthermore, they were forced to associate operations to singular chains in 2-associahedra, rather than cellular chains.
For several reason, this is undesirable:
\begin{itemize}
\item
$\Symp$ would be difficult to implement using the definition of \cite{bottman_carmeli}, 
because it would require regularizing uncountably-infinitely-many moduli spaces compatibly.

\smallskip

\item
In a linear $A_\infty$-category, there is one operation for every finite sequence of objects, and the $A_\infty$-equations express the coherences satisfied by these operations.
The \cite{bottman_carmeli} definition of a linear $(A_\infty,2)$-category is much less concise of an algebraic structure -- besides there being uncountably-infinitely-many operations of each arity, there are no ``$(A_\infty,2)$-equations''.
\end{itemize}

\subsection{$(A_\infty,2)$-flow categories and linear $(A_\infty,2)$-categories}

The contribution of this work is to define the notion of an \emph{$(A_\infty,2)$-flow category} in Definition~\ref{def:A_infty-2-flow_category}
and to produce a new definition of a linear $(A_\infty,2)$-category in Definition~\ref{def:A_infty-2-category}.
The notion of an $(A_\infty,2)$-flow category is analogous to that of a flow category \cite{cohen-jones}.
(Also see \cite{porcelli_smith} for a recent example of flow categories being used in the context of Floer homotopy theory.)
Using the toolbox of adiabatic Fredholm theory, as developed by the authors in \cite{bottman_wehrheim:adiabatic}, we plan to construct $\Symp$ as an $(A_\infty,2)$-flow category.
Our new definition of a linear $(A_\infty,2)$-category ameliorates the two issues mentioned in bullet points at the end of the previous subsection as follows.
Recall the structural properties of 2-associahedra:

\medskip

\noindent
{\bf \textsc{(forgetful)} and \textsc{(recursive)} parts of Theorem 4.1, \cite{bottman_2-associahedra}, paraphrased.}
{\it The 2-associahedra $W_\bn$, for $r \geq 1$ and $\bn \in \bZ_{\geq0}^r$,
satisfy the following properties:
\begin{itemize}
\item[] \textsc{(forgetful)} $W_\bn$ is equipped with forgetful maps $\pi\colon W_\bn \to K_r$ to the $r$-th associahedron, which are surjective maps of posets.

\smallskip

\item[] \textsc{(recursive)} The faces of $W_\bn$ decompose canonically as Cartesian 
products of fiber products of smaller 2-associahedra, where the fiber products are with respect to the forgetful maps to the associahedra.
\end{itemize}
}

\begin{figure} \label{fig:degens of fiber product}
\centering
\def\svgwidth{0.8\columnwidth}
\begingroup%
  \makeatletter%
  \providecommand\color[2][]{%
    \errmessage{(Inkscape) Color is used for the text in Inkscape, but the package 'color.sty' is not loaded}%
    \renewcommand\color[2][]{}%
  }%
  \providecommand\transparent[1]{%
    \errmessage{(Inkscape) Transparency is used (non-zero) for the text in Inkscape, but the package 'transparent.sty' is not loaded}%
    \renewcommand\transparent[1]{}%
  }%
  \providecommand\rotatebox[2]{#2}%
  \newcommand*\fsize{\dimexpr\f@size pt\relax}%
  \newcommand*\lineheight[1]{\fontsize{\fsize}{#1\fsize}\selectfont}%
  \ifx\svgwidth\undefined%
    \setlength{\unitlength}{1805.05406652bp}%
    \ifx\svgscale\undefined%
      \relax%
    \else%
      \setlength{\unitlength}{\unitlength * \real{\svgscale}}%
    \fi%
  \else%
    \setlength{\unitlength}{\svgwidth}%
  \fi%
  \global\let\svgwidth\undefined%
  \global\let\svgscale\undefined%
  \makeatother%
  \begin{picture}(1,0.84681718)%
    \lineheight{1}%
    \setlength\tabcolsep{0pt}%
    \put(0.01288946,0.47105434){\color[rgb]{0,0,0}\makebox(0,0)[lt]{\lineheight{0}\smash{\begin{tabular}[t]{l} \end{tabular}}}}%
    \put(0,0){\includegraphics[width=\unitlength,page=1]{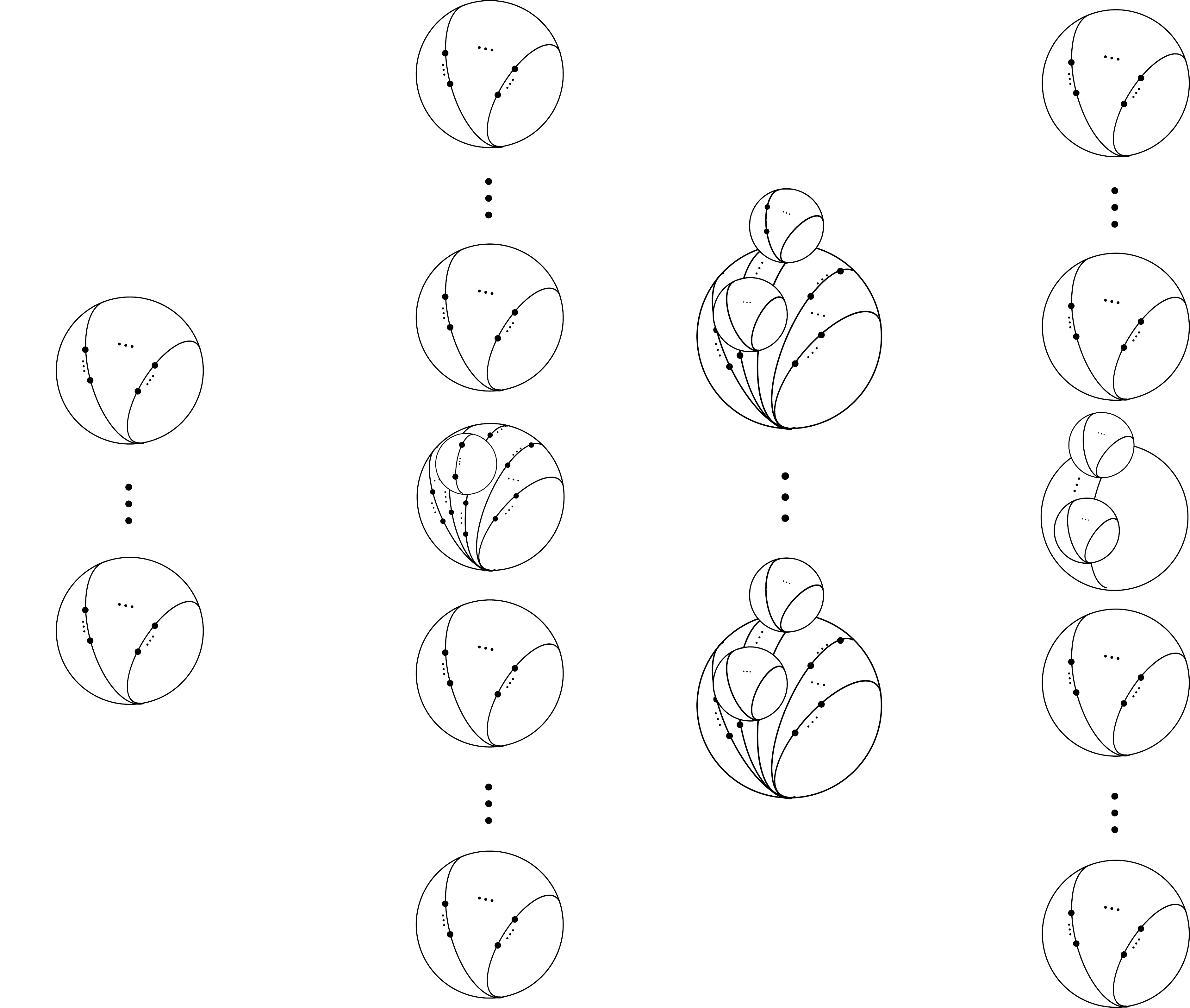}}%
    \put(-0.00217469,0.41603549){\color[rgb]{0,0,0}\makebox(0,0)[lt]{\lineheight{0}\smash{\begin{tabular}[t]{l}$\partial$\end{tabular}}}}%
    \put(0.22270194,0.41603549){\color[rgb]{0,0,0}\makebox(0,0)[lt]{\lineheight{0}\smash{\begin{tabular}[t]{l}$=$\end{tabular}}}}%
    \put(0.53412057,0.41603549){\color[rgb]{0,0,0}\makebox(0,0)[lt]{\lineheight{0}\smash{\begin{tabular}[t]{l}$\cup$\end{tabular}}}}%
    \put(0.77048657,0.41603549){\color[rgb]{0,0,0}\makebox(0,0)[lt]{\lineheight{0}\smash{\begin{tabular}[t]{l}$\cup$\end{tabular}}}}%
    \put(0,0){\includegraphics[width=\unitlength,page=2]{degens_of_fiber_product.pdf}}%
  \end{picture}%
\endgroup%

\caption{Here we depict the codimension-1 degenerations in an element of $\sO$ in a heuristic fashion.
On the left is a representative element of a fiber product of 2-associahedra.
There are three types of codimension-1 degenerations, which are depicted from left to right on the right-hand side: marked points on a single seam on a single sphere can collide; a proper subset of seams on all spheres can collide; or marked points on a single sphere can diverge to infinity -- which is equivalent to all seams on a single sphere colliding.
For details see Remark~\ref{rmk:three_types_of_comp_maps}.
}
\end{figure}

The \textsc{(recursive)} property was the reason that \cite{bottman_carmeli} was 
unable to adapt \eqref{eq:A-infty_coherence} to the context of $(A_\infty,2)$-categories.
However, there is a remedy for this problem: 
We expand the collection $(W_\bn)$ to the larger collection
\begin{align} \label{eq:O}
\sO \coloneqq \bigl(
W_{\bn^1} \times_{K_r} \cdots \times_{K_r} W_{\bn^a}
\bigr)_{\substack{r\geq 1, a\geq 1,\\\bn^1,\ldots,\bn^a \in 
\bZ_{\geq0}^a}}.
\end{align}
Again appealing to \cite[Theorem 4.1]{bottman_2-associahedra}, $\sO$ has the property that its
(codimension $1$) faces canonically decompose as Cartesian products of pairs of
elements of $\sO$ as depicted in Figure~\ref{fig:degens of fiber product}.
This enables our new Definition~\ref{def:A_infty-2-category} of linear $(A_\infty,2)$-categories, in which we associate operations to cellular chains in fiber products of 2-associahedra.
The two new definitions are related in that the chain complexes on an $(A_\infty,2)$-flow category form a linear $(A_\infty,2)$-category as follows.

\medskip

\noindent
{\bf Theorem \ref{thm:A-infinity_2_flow_to_linear}, paraphrased.}
{\it 
An $(A_\infty,2)$-flow category gives rise to a linear $(A_\infty,2)$-category.}

\begin{remark}
Our motivating example of an $(A_\infty,2)$-flow category is $\Symp$.
This is not, however, the only example of its kind:
\begin{itemize}
\item Recall that the original example of an $A_\infty$-space is any based loopspace $\Omega X$ \cite{stasheff1963I,stasheff1963II}.
In a similar way, Bottman--Carmeli showed that any continuous map $f\colon A \to X$ of pointed topological spaces gives rise to what they called a ``$\widetilde{(A_\infty,2)}$-space'', which, approximately, is an $(A_\infty,2)$-category in $\Top$ with a single object.

\item
In the forthcoming work \cite{manion_portwood}, Andrew Manion and Chris Portwood construct an uncurved linear $(A_\infty,2)$-category in which the objects, morphisms, and 2-morphisms are, respectively, uncurved $A_\infty$-categories, uncurved $A_\infty$-functors, and prenatural transformations.
This fulfills a longstanding expectation, and provides evidence that the definition of linear $(A_\infty,2)$-categories that we provide here is a/the ``right'' one.

Manion--Portwood's construction is a 2-categorical analogue of the fact that chain complexes form a dg-category, and in particular a linear $A_\infty$-category.
Their construction is also analogous to the fact that all (small) $(\infty,1)$-categories form an $(\infty,2)$-category \cite[Introduction to the Appendix, \S2.3]{gaitsgory2019study}.
\end{itemize}

\noindent
Continuing the line of thought in the latter bullet, we expect that our definition of $(A_\infty,2)$-flow categories in this paper can be extended to a definition of $(A_\infty,n)$-flow categories; we plan to construct this definition in a future paper.
This definition will be similar in spirit to Backman--Bottman--Poliakova's definition of $n$-associahedra in \cite{backman_bottman_poliakova}, following Bottman's definition of 2-associahedra in \cite{bottman_2-associahedra}.
We expect that Manion--Portwood's construction will extend to show that $(A_\infty,n-1)$-categories form an $(A_\infty,n)$-category.

Finally, we note that the $n$-associahedra underlie this future construction not just as a combinatorial scaffold, but as an operadic one.
Bottman--Carmeli showed in \cite{bottman_carmeli} that the 2-associahedra form an operad relative to the associahedra, and defined linear $(A_\infty,2)$-categories as categories over this relative 2-operad.
Similarly, we expect that the $n$-associahedra will form an operad relative to the $(n-1)$-associahedra, and that linear $(A_\infty,n)$-categories can be interpreted as categories over this relative $n$-operad.
\null\hfill$\triangle$
\end{remark}

\medskip

Before we end this introduction with an overview of the symplectic applications, we will specify some notational conventions and standing assumptions: 
We incorporate the bubbling phenomena in symplectic applications by allowing for unstable domain curves and thus including 2-associahedra whose realization in terms of witch curves \cite{bottman_realizations} allows for nondiscrete stabilizer subgroups.
We then interpret their algebraic impact as curvature generalizing \cite{fooo_12}.

\medskip
\begin{center}
\fbox{\parbox{0.8\columnwidth}{
\centering
All notions of $A_\infty$- and $(A_\infty,2)$-categories allow for nontrivial curvature.
}}
\end{center}
\medskip

\noindent
On the other hand, we simplify the algebraic exposition by the following assumption about chain complexes -- which goes along with the simplifying assumption of spaces being manifolds.
\medskip
\begin{center}
\fbox{\parbox{0.6\columnwidth}{
\centering
The coefficient ring (or field) $\Lambda$ has characteristic 2.
}}
\end{center}
\medskip

\subsection{Applications in Symplectic Geometry} \label{ssec:symp}

\subsubsection{From Weinstein's vision via Floer theory to Fukaya $A_\infty$-categories and the monotone symplectic 2-category}

In the early 1980s Weinstein \cite{weinstein_symplectic_category} challenged the nascent field of Symplectic Geometry/Topology to complete the construction of a {\bf symplectic ``category''} based on his observation that ``everything is Lagrangian'' and ought to form a category along the following lines:  
\begin{itemlist}
\item 
objects are symplectic manifolds, 
\item
morphisms $M_0 \overset{L_{01}}{\to} M_1$ are Lagrangian relations, i.e.\ Lagrangian submanifolds $L_{01}\subset M_0^-\times M_1$, 
\item
the natural geometric composition of general relations 
\begin{align} \label{eq:composition}
L_{01} \circ L_{12} &\; \coloneqq\; 
\bigl\{ (p_0, p_2 ) \,|\, \exists \, p_1\in M_1 : (p_0,p_1)\in L_{01}, (p_1,p_2)\in L_{12} \bigr\}  \\
&\;=\; 
 \pi_{02} \bigl( L_{01} \times L_{12}   \cap M_0 \times \Delta_{M_1} \times M_2 \bigr)
\nonumber
\end{align}
yields non-singular Lagrangians if the intersection with the diagonal $\Delta_M\coloneqq\{(p,p) \,|\, p\in M\} \subset  M\times M$ is transverse, and the restriction of the projection $\pi_{02}:M_0\times \Delta_{M_1}\times M_2  \to M_0\times M_2$ to the intersection $L_{01} \times_{M_1} L_{12}:=L_{01} \times L_{12}   \cap M_0 \times \Delta_{M_1} \times M_2$ is injective.
\end{itemlist}
A few years later, Gromov \cite{gromov} introduced the tool of {\bf pseudoholomorphic curves} into the field to establish the rigidity effects that make it a `Geometry' (e.g.\ a ball symplectically embeds into a cylinder only if it does so trivially) -- in contrast to the flexibility effects that make it a `Topology' (e.g.\ by Darboux theorem all symplectic structures in a fixed dimension are locally equivalent).
Since then, the study of algebraic structures derived from moduli spaces of pseudoholomorphic curves -- from Gromov-Witten invariants to Symplectic Field Theory \cite{eliashberg-givental-hofer-sft} and beyond, from Floer theory \cite{floer_lag_int} to Fukaya categories \cite{fooo_12} and beyond -- has grown Symplectic Geometry into a major subject with deep connections to low dimensional topology, algebraic geometry, and mathematical physics.
Again, Lagrangians appear prominently as the natural boundary condition for pseudoholomorphic curves, which allows to compactify and regularize the moduli spaces.

However, the only progress on Weinstein's challenge was his observation, published in \cite{guillemin_sternberg}, that Lagrangian subspaces of symplectic vector spaces always have a well defined geometric composition \eqref{eq:composition}, and thus do form a well defined linear symplectic category.
The next step towards Weinstein's challenge -- in ``monotone'' symplectic manifolds in which pseudoholomorphic curves are particularly well behaved -- was Wehrheim--Woodward's 2006 discovery \cite{wehrheim_woodward_2010functoriality} of the {\bf extended monotone symplectic 2-category}, which is roughly given as follows: 
\begin{itemlist}
\item 
objects are monotone symplectic manifolds, 
\item
$1$-morphisms $M_0 \overset{\cdots}{\to} \ldots \overset{\cdots}{\to} M_1$ are finite composable sequences of monotone Lagrangian relations, 
\item
composition of $1$-morphisms is given by concatenation of finite sequences, 
\item
$2$-morphisms are quilted Floer homology classes \cite{wehrheim_woodward_quilted}, 
\item
the higher structure maps are induced by moduli spaces of pseudholomorphic quilts with Lagrangian boundary and seam conditions \cite{wehrheim_woodward_jhol_quilts} -- based on the observation that Lagrangian relations $L_{01}\subset M_0^-\times M_1$ allow for a coupling between pseudoholomorphic curves in $M_0$ and $M_1$, 
\item
there are $2$-equivalences
$({\scriptstyle  \ldots \xrightarrow{L_{01}} M_1 \xrightarrow{L_{12}} \ldots } ) \simeq ( {\scriptstyle \ldots \xrightarrow{L_{01}\circ L_{12}} \ldots } )$ 
when the geometric composition \eqref{eq:composition} is ``embedded'' (i.e.\ $L_{01} \times_{M_1} L_{12}$ is cut out transversely and $\pi_{02}|_{L_{01} \times_{M_1} L_{12}}$ is injective) \cite{wehrheim_woodward_geometric_composition}.
\end{itemlist}
This construction contains a full resolution of Weinstein's challenge by allowing for non-monotone objects and morphisms, and taking equivalence classes induced by $({\scriptstyle  \ldots \xrightarrow{L_{01}} M_1 \xrightarrow{L_{12}} \ldots } ) \simeq ( {\scriptstyle \ldots \xrightarrow{L_{01}\circ L_{12}} \ldots } )$ whenever the geometric composition is well defined.\footnote{This construction in \cite{wehrheim_woodward_2010functoriality} can nowadays be understood as the categorification of a partially defined simplicial set.
Indeed, the categorification construction requires only $1$- and $2$-simplices.}
From the perspective of higher category theory, however, one ought to replace the quotient construction by an $(\infty,2)$-category whose $2$-morphisms encode the equivalence.
And from the perspective of modern Symplectic Geometry after Gromov, Floer, and Fukaya, the $2$-compositions in this higher categorical structure ought to arise from moduli spaces of pseudoholomorphic curves.
That is exactly what \cite{wehrheim_woodward_2010functoriality} did -- at the expense of restricting to monotone objects and morphisms to exclude a novel singularity formation in \cite{wehrheim_woodward_geometric_composition}: the novel figure eight bubbling phenomenon illustrated in \cite[Figure 15]{abouzaid_bottman}, which must be expected at the boundary of compactified moduli spaces.
Under the same assumptions, Ma'u--Wehrheim--Woodward \cite{mww} then lifted the main part of this construction\footnote{
The 2-category structure induces Yoneda embeddings 
$\Mor(M_1,M_2) \to {\rm Fun}(\Mor(\pt,M_1), \Mor(\pt,M_2) )$ 
that are functorial in the sense that they intertwine the composition of morphisms with the composition of functors.
} to the Floer chain level, resulting in $A_\infty$-functors $\Phi_{L_{12}} : \Fuk^\# (M_1) \to  \Fuk^\# (M_2)$ between extended Fukaya categories, which intertwine the geometric composition of Lagrangians with the composition of functors $\Phi_{L_{01}}\circ \Phi_{L_{12}} \simeq \Phi_{L_{01}\circ L_{12}}$ -- when \eqref{eq:composition} is embedded.
Here the (non-extended) {\bf Fukaya $\mathbf{A_\infty}$-category} ${\rm\mathbf{Fuk}}\mathbf{(M)}$ of a symplectic manifold $M$ is roughly defined as follows:
\begin{itemize}
\item 
Objects are Lagrangian submanifolds $L\subset M$, 
\item
morphisms are Floer chains, i.e.\ $\Mor(L,L')$ is generated by intersection points $L\cap L'$, 
\item
the $A_\infty$-structure maps are induced by moduli spaces of pseudholomorphic curves with Lagrangian boundary conditions, whose domains are given by the realizations of the associahedra in terms of disks with marked points on the boundary.
\end{itemize}
While these constructions initiated the development of categorical symplectic geometry as surveyed in \cite{abouzaid_bottman}, a general and rigorous foundation for its wide-ranging applications within Symplectic Geometry and connections to Algebraic Geometry and Low Dimensional Topology requires dropping the highly restrictive monotonicity assumptions, removing the extension construction -- i.e.\ defining composition directly by \eqref{eq:composition} -- and making the construction at chain-level.
This required a detailed understanding of the novel figure eight bubbling phenomenon, which was achieved analytically in \cite{bottman_wehrheim,bottman_figure_eight_singularity,bottman_wehrheim:adiabatic} and algebraically in \cite{bottman_2-associahedra,bottman_realizations,bottman_oblomkov,bottman_carmeli} and the present paper.
The resulting first prediction of the algebraic impact is \cite[Conj.4.11]{bottman_wehrheim}, which proposed that the geometric composition $(L_{01},L_{12})\mapsto L_{01}\circ L_{12}$ in \eqref{eq:composition} should lift to an $A_\infty$-bifunctor on non-extended Fukaya categories of cleanly-immersed Lagrangians, 
\begin{align}
\label{eq:bifunctor}
\bigl(\Fuk(M_0^-\times M_1),\Fuk(M_1^-\times M_2)\bigr)
\to
\Fuk(M_0^-\times M_2).
\end{align}
Restricting to $M_0=\pt$, this generalizes the \cite{mww} constructions to $A_\infty$-functors on non-extended Fukaya categories $\Fuk(M_1^-\times M_2) \to {\rm Fun}(\Fuk(M_1), \Fuk(M_2) )$, which are compatible with geometric composition \eqref{eq:composition} with
figure eight bubbling contributing to bounding cochains (see \S\ref{sss:MC}).

\subsubsection{Symplectic Examples of $(A_\infty,2)$-categorical structures}

Bottman's work on the combinatorics of the 2-associahedra -- a new family of polytopes motivated by the bubbling analysis in \cite{bottman_wehrheim} -- led to the notion of $(A_\infty,2)$-categories \cite{bottman_carmeli} and a {\bf blueprint for the Symplectic $\mathbf{(A_\infty,2)}$-category} $\rm\mathbf{Symp}$ in \cite[\S4]{abouzaid_bottman}, which contains \eqref{eq:bifunctor} and thus explains the chain-level functoriality \cite{mww} as a Yoneda construction.
This higher symplectic category $\Symp$ is now under construction by the present authors to fully resolve Weinstein's challenge of constructing a symplectic ``category'' -- in the natural context of Floer theory.
It generalizes and un-extends the monotone symplectic 2-category roughly as follows: 
\begin{itemlist}
\item 
objects are symplectic manifolds, 
\item
$1$-morphisms $M_0 \overset{L_{01}}{\to} M_1$ are cleanly immersed Lagrangian relations $L_{01} \looparrowright M_0^-\times M_1$, 
\item
composition of $1$-morphisms is given geometrically -- generalizing \eqref{eq:composition} by replacing the diagonal with the graph of a Hamiltonian symplectomorphism (which is Floer-theoretically inessential), 
\item
$2$-morphisms are Floer chains, i.e.\ $\leftindex^1\Mor(L_{01},L'_{01})$ is generated by the intersection points $L_{01}\cap L'_{01}$, 
\item
a bi-infinite collection of structure maps --  starting with \eqref{eq:bifunctor} -- is induced by moduli spaces of pseudholomorphic quilts 
whose domains are given by the witch curve realizations of the 2-associahedra.
\end{itemlist}
This blueprint for the construction of $\Symp$ is the motivating example for the notions of $(A_\infty,2)$-type categories that are developed in  \cite{bottman_carmeli} and this paper.
First, the natural construction of Gromov-compactified moduli spaces of pseudoholomorphic quilts yields compact topological spaces, which form the 2-composition data of an {\bf $\mathbf{(A_\infty,2)}$-category in spaces} as defined in \cite[Definition 3.6]{bottman_carmeli}.	
In the spirit of Definition~\ref{def:general A_infty-2-flow_category}, this structure might also be called an {\bf ``unregularized $\mathbf{(A_\infty,2)}$-flow category''}\footnote{
A formal definition of this notion would replace the ``regularization framework'' ($(*)$spaces, $(*)$maps) by (compact spaces, continuous maps) in Definition~\ref{def:general A_infty-2-flow_category}.
This makes sense of the structure and properties except for (iii) -- which could be dropped or replaced by a notion of ``expected boundary'' for the unregularized moduli spaces.
}
It is roughly given as follows:
\begin{itemlist}
\item
$(\Ob, \leftindex^1\Mor)=(\text{symplectic manifolds}, \text{Lagrangian relations})$ with composition \eqref{eq:composition} for appropriate choices of Hamiltonian symplectomorphisms forms a semi-simplicial set -- a generalized category.

\smallskip

\item
For every $L, L' \in  \leftindex^1\Mor$, there is an intersection space $\leftindex^2\Mor(L,L')=L\cap L'$.

\smallskip

\item
For any collection of objects $M_0,\ldots ,M_r\in\Ob$ and 1-morphisms
$\cL=\bigl(L_{(i-1)i}^{j,k} \in  \leftindex^1\Mor(M_{i-1},M_i) \bigr)$
there is a compactified moduli space $\overline\cM(\cL) $ with continuous evaluation maps $\alpha^{\cdots}_\cL, \beta^{\cdots}_\cL$ to intersection spaces and a forgetful map $p_\cL([(\ul \Sigma,\ul I, \ul u)])=[(\ul \Sigma,\ul I)]$ to a fiber product of 2-associahedra as specified in \eqref{eq:2-a-b-maps}.
Roughly speaking, such a compactified moduli space
$$
\overline\cM(\cL) \coloneqq \bigl\{ 
(\ul \Sigma,\ul I, \ul u) \,\big|\, [(\ul \Sigma,\ul I)]\in W_\bn,  u_i : \Sigma_i \to M_i , \overline\partial_{J_i} u_i =0,  (u_{i-1},u_i)(I_{ik})\subset L^k_{(i-1)i} 
\bigr\} / \sim
$$
consists of equivalence classes -- under reparameterization with biholomorphisms -- of 
\begin{itemize}
\item[$-$]
a collection of $r+1$ ``patches'' given by nodal Riemann surfaces $\ul\Sigma=(\Sigma_i)_{0\leq i \leq r}$ with boundary, 
\item[$-$]
``seams'' providing pairwise identifications of the boundary components $\partial\Sigma_{i-1} \hookleftarrow I_{ik}\hookrightarrow \partial \Sigma_i$, 
\item[$-$] 
maps $u_i:\Sigma_i \to M_i$ for each patch, which satisfy the Cauchy-Riemann PDEs $\overline\partial_{J_i} u_i =0$ and have finite symplectic area,\footnote{Finite area/energy $\int u_i^*\omega_{M_i}=\frac 12\int |\rd u_i|^2$ guarantees the existence of limits, which in turn define the maps $\alpha^{\cdots}_\cL, \beta^{\cdots}_\cL$.} 
\item[$-$]
lifts $I_{ik} \to L^k_{(i-1)i}$ of $(u_{i-1},u_i)|_{I_{ik}}: I_{ik}\to M_{i-1}\times M_i$ for each seam, which generalize the seam condition ``$(u_{i-1},u_i)(I_{ik})\subset L^k_{(i-1)i}$'' to immersions.
\end{itemize}
\end{itemlist}
As a second example, we expect to obtain a {\bf regularized $\mathbf{(A_\infty,2)}$-flow category} in the sense of Definition~\ref{def:A_infty-2-flow_category} by applying to the unregularized moduli spaces an appropriate choice of regularization framework as described in Definition~\ref{def:framework}.
This will replace each compactified moduli space $\overline\cM(\cL)$ (and its evaluation and forgetful maps) by a regularized space $\cX(\cL)$ such as a $\cC^0$-manifold (with continuous evaluation and forgetful maps).

As a third example, Theorem \ref{thm:A-infinity_2_flow_to_linear} then extracts from the regularized $(A_\infty,2)$-flow category a {\bf linear $\mathbf{(A_\infty,2)}$-category} as in Definition~\ref{def:A_infty-2-category}, roughly as follows:
\begin{itemlist}
\item
$(\Ob, \leftindex^1\Mor)=(\text{symplectic manifolds}, \text{Lagrangian relations})$ with composition \eqref{eq:composition} for appropriate choices of Hamiltonian symplectomorphisms remains the same as above.

\smallskip

\item
For every $L, L' \in  \leftindex^1\Mor$, the intersection space $L\cap L'$ is replaced by an appropriate chain complex
$\leftindex^2\Mor(L,L') \coloneqq C_* (L\cap L')$ with coefficients in some ring (or field) $\Lambda$.

\smallskip

\item
For each collection of 1-morphisms $\cL = \bigl(L_{(i-1)i}^{j,k}\bigr)_{(i,j,k)\in I^{\, r,a}_{\underline\bn}}$ a $\Lambda$-linear map is obtained by a push-pull construction with the evaluation maps from the regularized moduli space $\cX(\cL)$, 
\begin{align*}
\bmu^{r,a}_{\underline\bn}
\,:\: 
\bigotimes_{(i,j,k)}
C_*\bigl(L_{(i-1)i}^{j,k-1} \cap L_{(i-1)i}^{j,k}\bigr)
&\:\to\:
\bigotimes_{j}
C_*\bigl(L_{01}^{j,0}\circ\cdots L_{(r-1)r}^{j,0} \cap L_{01}^{j,n_1^j}\circ\cdots L_{(r-1)r}^{j,n_r^j} \bigr)
\nonumber \\
\bigotimes_{(i,j,k) } c_i^{j,k}  \:  & \:\mapsto\: \: \bigl(\beta^j_{\cL} \bigr)_* \bigl( a_{\cL}^{i,k} \bigr)^*  \bigl( \underset{(i,j,k) }{\times} c_i^{j,k}  \bigr).
\end{align*}
\end{itemlist}
This construction will in particular contain the Fukaya $A_\infty$-categories $\Fuk(M)=\leftindex^1\Mor(\pt,M)$ of all symplectic manifolds $M$ and the predicted bifunctors \eqref{eq:bifunctor}; see Lemma~\ref{lem:mor is A-infty cat} and Remark~\ref{rmk:compatible}.
The following subsections sketch some of the major applications of these structures.

\begin{remark}
In applications to moduli spaces of pseudoholomorphic objects, the notion of regularized space will depend on the choice of regularization theory and will result in fractional counts that require more general coefficient rings than the characteristic 2 case that we use in this paper.
We plan to give a non-partisan construction of $\Symp$ -- for use with any regularization theory -- by axiomatizing these choices of types of regularized spaces and chain complexes.
This, however, does not contribute to the algebraic exposition of the new categorical notions which are the point of this paper.
That is why we avoid the axiomatic language for now by assuming manifolds as regularized spaces, which allows for the simplifying assumption of characteristic 2 that avoids signs.
\null\hfill$\triangle$
\end{remark}

\subsubsection{Topological invariants constructed via Floer field theories}

The symplectic 2-category of \cite{wehrheim_woodward_2010functoriality} was originally developed to explain and add to the wealth of topological invariants -- such as Heegard-Floer theory \cite{ozsvath_szabo_HF_defn} -- of the type ``Floer theory applied to Lagrangians arising from the parts of a topological decomposition'' that originated from the Atiyah-Floer conjecture \cite{Atiyah-1988-new-invariants}.
For example, various dimensional reductions of gauge theories (such as Seiberg-Witten and Donaldson-Yang-Mills) associate to any Heegard decomposition $Y=H_0\cup_\Sigma H_1$ of a 3-manifold along a separating surface $\Sigma$ a pair of Lagrangians $L_{H_0},L_{H_1}\subset M_\Sigma$ in a symplectic manifold.
The conjecture of Atiyah, resp.\ the theorem of Ozsv\'ath-Szab\'o, is that the resulting Floer homology $HF(L_{H_0},L_{H_1})$ is in fact independent of the choice of the decomposition -- hence an invariant of the 3-manifold $Y$.
This construction principle for topological invariants was formalized in \cite{wehrheim_floer_field_philosophy} and explains the invariance by the higher categorical structure: Dimensional reduction of gauge theories yields functors from topological categories such as (2-manifolds, 3-cobordisms) to a symplectic category (symplectic manifolds, {\small suitably generalized} Lagrangians).
Then the 2-categorical structure of the latter induces a Yoneda functor to (categories, functors).
Their composition is a topological field theory functor (2-manifolds, 3-cobordisms) $\to$ (categories, functors) which factors through a symplectic category -- a ``Floer field theory''.
Here the Floer homology ${\rm HF}(L_{H_0},L_{H_1})=\leftindex^2\Mor(  {\scriptstyle M_\emptyset   \xrightarrow{L_{H_0}} M_\Sigma \xrightarrow{L_{H_1}}  M_\emptyset  })$ 
is the 2-morphism space associated to the decomposition of the closed 3-manifold -- which is independent of the choice of decomposition by functoriality of the Floer field theory.
 
This framework -- in monotone settings, based on \cite{wehrheim_woodward_2010functoriality} --  has already been utilized in \cite{abouzaid2019khovanov,manolescu2012floer,reza_khovanov,wehrheim_woodward_floer_field_theory,wehrheim2015floer} 
 to construct or identify invariants.
The analogous application to Heegard-Floer type invariants -- outlined in \cite{Auroux-Heegard,wehrheim_floer_field_philosophy}
based on \cite{perutz_lag_matching_I,perutz_lag_matching_II} -- 
requires an extension of the symplectic 2-category beyond the monotone case.
The resulting symplectic $(A_\infty,2)$-category under construction will moreover allow to construct an abundance of additional topological invariants and field theoretic structures \cite{manion_portwood_HF,wehrheim_floer_field_philosophy} by working at the algebraic and geometric level -- as the pseudoholomorphic curve analysis is packaged in the higher categorical structure.
First examples of such new topological invariants which require correction terms from figure eight bubbles can be found in \cite{cazassus2020correspondence, hedden_herald_kirk}.

\subsubsection{Functors on Fukaya categories}\label{sss:funfuk}

A special case of the $A_\infty$-bifunctor \eqref{eq:bifunctor} associates an $A_\infty$-functor $\Phi_{L_{01}}:\Fuk(M_1)\to\Fuk(M_2)$ to any Lagrangian in $M_1^-\times M_2$ -- and allows to compute the composition of such functors 
$\Phi_{L_{01}}\circ \Phi_{L_{12}} \simeq \Phi_{L_{01}\circ L_{12}}$ by studying the geometric composition $L_{01}\circ L_{12}\subset M_0^-\times M_2$.
For example, any Hamiltonian $G$-action on a symplectic manifold $M$ can be described in terms of a Lagrangian submanifold of  $\rT^*G \times M\times M^-$.
The resulting functor $\Fuk(\rT^*G) \to \Fuk(M^-\times M)$ can be used as a tool for computations in $\Fuk(M)$.
For instance, \cite{evans2019generating} shows that when $M$ is a toric variety, then the level set $\mu^{-1}\{0\}\subset M$ of the moment map split-generates an orthogonal factor of $\Fuk M$.

While these functors have not yet been rigorously available -- except for monotone cases with an artificial extension \cite{mww} -- this vision guided the developments sketched in the following subsections.
Each of these ``applications of the expected higher categorical structure'' had to find work-arounds for the absence or shortcomings of the foundations -- e.g.\ restricting the allowed Lagrangian relations to graphs of maps and split Lagrangians, or working with more easily defined $A_\infty$-bimodules \cite{mau_n-modules}.
This resulted in groundbreaking constructions -- though limited in scope and often based on quite complicated, lengthy, and unnatural constructions.
The benefit of a full construction of the symplectic $(A_\infty,2)$-category $\Symp$ will be a simplification and generalization of each of these developments.
And thus the benefit of the algebraic framework developed in this paper is that it allows for such developments based on an axiomatic description of $\Symp$ -- prior to the completion of its construction, and independent of the particularities of regularization frameworks.

\subsubsection{Generation criteria for Fukaya categories}

The most basic case of the above functoriality is the fact that the diagonal $\Delta_M \subset M^-\times M$ represents the identity functor $\Phi_{\Delta_M}={\rm Id}_{\Fuk(M)}$ on the Fukaya category of $M$.
Now the higher categorical structure -- after extension to formal sums -- allows to deduce that an equivalence $\Delta_M \simeq \sum L'_\alpha \times L_\alpha$ to a sum of split Lagrangians in $\Fuk(M^-\times M)$ -- a ``resolution of the diagonal'' -- directly implies that $\Fuk(M)$ is ``generated'' by the $L_\alpha\subset M$ in the sense that for any object $L$ of $\Fuk(M)$ we have\footnote{
Here only the meaning of $\Phi_{L'}(L)$ as an appropriate coefficient depends on the concrete higher categorical structure.
} 
$$\textstyle
L \;=\; L \circ \Delta_M  \;=\; \Phi_{\Delta_M}(L) \;\simeq\;  \sum \Phi_{L'_\alpha \times L_\alpha}(L)  \;=\; \sum \Phi_{L'_\alpha}(L) \;  L_\alpha .
$$
This line of argument was pioneered by Nadler \cite{nadler2009microlocal} and led to the generation criteria for Fukaya categories formulated by Ganatra \cite{ganatra_thesis} and Abouzaid \cite{abouzaid_generation}.

In \cite[\S5.7]{abouzaid_bottman}, Abouzaid--Bottman proposed applying the Barr--Beck theorem in the context of functors between Fukaya categories.
Specifically, given a Lagrangian correspondence $L_{12} \subset M_1^-\times M_2$ that gives rise to a functor $\Phi_{L_{12}}^\#\colon \Fuk^\#(M_1) \to \Fuk^\#(M_2)$, they sketched the definition of a ``quilted closed-closed map'', then explained how to use this map to formulate a hypothesis for when $\Fuk^\#(M_2)$ can be computed in terms of $\Fuk^\#(M_1)$ and the endofunctor $\Phi_{L_{12}^T}^\# \circ \Phi_{L_{12}}^\#$.
(In the special case $M_1 = \pt$, their conjectured ``symplectic Barr--Beck theorem'' reduces to a close variant of Abouzaid's generation criterion.)
While Abouzaid--Bottman's conjecture was formulated for extended Fukaya categories, we expect that our ongoing work will enable a version of this conjectured theorem for non-extended Fukaya categories.

\subsubsection{Proofs of Mirror Symmetry }

The above generation criteria enable an approach to establishing a Mirror Symmetry pair $M \sim M^\vee$ by exhibiting a generating set $(L_\alpha)$ in $\Fuk(M)$ with the same algebraic properties as a generating collection $(S_\alpha)$ of sheaves on the complex mirror $M^v$.
This approach to proving HMS, or close adaptations thereof, has been carried out in a number of contexts, e.g.:
\begin{itemize}
\item Sheridan, for Fano hypersurfaces in projective space \cite{sheridan_fano};
\item Sheridan--Smith, for Greene--Plesser mirrors \cite{sheridan_smith:hms_for_greene-plesser_mirrors};

\item Li, for open Riemann surfaces \cite{heather:thesis};

\item Cho--Hong--Lau, for the weighted projective line \cite{cho_hong_lau:hms_for_weighted_projective_line};

\item We, for special isogenous tori \cite{wu:hms_for_special_isogenous_tori}; and

\item Lekili--Ueda, for certain Milnor fibers \cite{lekili_ueda:hms_for_milnor_fibers_via_moduli}.
\end{itemize}

Finally, we mention Smith's work \cite{smith:pencils}.
Smith constructs an equivalence
\begin{align}
D^\pi\Fuk(\Sigma_g)
\simeq
D^\pi\Fuk(Q_0\cap Q_1;0),
\end{align}
where $\Sigma_g$ is a Riemann surface of genus $g\geq2$, and $Q_0\cap Q_1$ is the smooth intersection of two smooth quadric hypersurfaces in $\bCP^{2g+1}$.
This has an interpretation in terms of HMS, as explained in Smith's \S1.7.
Quilt theory is an essential ingredient in Smith's work.
In particular, he constructs an Ma'u--Wehrheim--Woodward-type $A_\infty$-functor
\begin{align}
\Fuk(Q_0\cap Q_1) \to \Fuk^\#\bigl(\text{Bl}_{Q_0\cap Q_1}\bigl(\bCP^{2g+1}\bigr)\bigr),
\end{align}
and argues in his Lemma 4.31 that this functor is quasi-isomorphic to one with image in the non-extended Fukaya category $\Fuk\bigl(\text{Bl}_{Q_0\cap Q_1}(\bCP^{2g+1})\bigr)$.
We anticipate that our work will enable similar arguments to be made in a more streamlined way: the functor associated to a correspondence will automatically have codomain a non-extended Fukaya category.

\subsubsection{Monoidal structure on Fukaya categories}
One of the many predictions for Fukaya categories arising from the SYZ Mirror Symmetry conjecture is the existence of a symmetric monoidal structure that is mirror to the natural tensor product on categories of coherent sheaves.
Again, the relation between algebraic and geometric properties in \S\ref{sss:funfuk} provides the crucial guide to identifying such structures: 
A monoidal functor should arise from a Lagrangian $\Gamma \subset (M\times M)^- \times M$.
Its symmetry would be guaranteed by invariance under the exchange of the first two factors $(x,y,o)\mapsto (y,x,o)$.
Associativity would be guaranteed by an identification between the two natural orders of composing $\Gamma$ with itself to obtain a Lagrangian in $(M\times M\times M)^- \times M$,
$$
\{(x,y,z,u) \,|\, \exists o : (x,y,o)\in \Gamma, (o,z,u)\in\Gamma  \} \; \sim \; \{(x,y,z,u) \,|\, \exists o : (x,o,u)\in \Gamma, (y,z,o)\in\Gamma  \}.
$$

A first example of such a symmetric monoidal structure was constructed by Subotic \cite{subotic} on the homology of the Fukaya category of a 2-torus $T^2$ equipped with a fibration over $S^1$ and distinguished section -- by constructing a Lagrangian $\Gamma$ that encodes fiberwise addition in the torus fibers.
The fact that Subotic worked in the SYZ setting is crucial here: SYZ mirror symmetry considers symplectic manifolds presented as Lagrangian torus fibrations with a distinguished Lagrangian section, which is necessary in order to make sense of ``fiberwise addition''.
Subotic's work therefore gave a roadmap toward equipping the homology-level Fukaya category with a symmetric conoidal structure in the SYZ setting.
Towards this, \cite{abouzaid_bottman_niu} constructed an analogue of $\Gamma$ in the context of an elliptically-fibered K3 surface, which is the next case to consider following Subotic's setting.
The associativity and symmetry of Subotic's functor followed immediately from the analogous properties satisfied by $\Gamma$, which are also satisfied by the correspondence constructed by Abouzaid--Bottman--Niu.

Pascaleff \cite{pascaleff_poisson} has proposed a generalization of this construction, and \cite[\S5.4]{abouzaid_bottman} discusses an enhancement of these result to the chain level, which would rely on a new algebraic notion of ``symmetric monoidal $A_\infty$-category''.
We expect that our notions of $(A_\infty,2)$-categories will inform this new algebraic notion, for the same reason that ordinary monoidal categories are related to 2-categories with a single object.
We eventually expect to need the notion of a symmetric monoidal $(A_\infty,2)$-category, in the context of Fukaya categories in the SYZ setting; such an algebraic notion will be related to the notion of an $(A_\infty,3)$-category.

\subsubsection{Bounding cochains, computations of Floer homology, and Seidel's formal group} \label{sss:MC}

The result of the analytic bubbling phenomena for pseudoholomorphic curves is that the $A_\infty$-structures are typically curved, that is the Floer differentials do not generally square to zero.
Thus (quilted) Floer homology generally depends on the choice of bounding cochains $b_L$ for each Lagrangian $L$ -- solutions of a Maurer-Cartan equation which guarantees that the $b_L$-twisted Floer differential squares to zero.
Thus generalizing the strip-shrinking isomorphism of (quilted) Floer homologies 
${\rm HF}(\ldots, L_{01},L_{12},\ldots)\simeq {\rm HF}(\ldots, L_{01}\circ L_{12},\ldots)$ 
under embedded geometric composition \cite{wehrheim_woodward_geometric_composition} to non-monotone settings in particular requires specifying bounding cochains.
While $\Symp$ does not contain quilted Floer complexes, any $A_\infty$-bifunctor $C^2$ in \eqref{eq:bifunctor} will map pairs of bounding cochains $b_{01}$ and $b_{12}$ for Lagrangian relations $L_{01}$ and $L_{12}$ to a bounding cochain for  $L_{12}\circ L_{23}$ given by
\begin{align}
b_{02} := \sum_{k,\ell\geq0} C^2\Bigl(\underbrace{b_{01},\ldots,b_{01}}_\ell \:\big|\: \underbrace{b_{12},\ldots,b_{12}}_k\Bigr).
\end{align}
With that the strip-shrinking isomorphism can be generalized to the \cite[\S4.4]{bottman_wehrheim} conjecture
${\rm HF}(\ldots, (L_{01},b_{01}),(L_{12},b_{12}),\ldots)\simeq {\rm HF}(\ldots, (L_{01}\circ L_{12},b_{02}),\ldots)$ which would allow for much broader computations of Floer homologies than the sample computations \cite{wehrheim_woodward_quilted} in monotone cases.

When we specialize the effect of \eqref{eq:bifunctor} on bounding cochains to $M_1 = M_2 = M_3 \eqqcolon M$ and the diagonal relation $\Delta_M$, then \eqref{eq:bifunctor} induces a product on the set of bounding cochains for $\Delta_M$.
Moreover, Seidel  \cite{seidel_formal} observed that this product should be compatible with an algebraic gauge action on solutions of the Maurer-Cartan equation, and thus induce a group structure on $MC(M)$,  the set of gauge equivalence classes of Maurer-Cartan solutions.
While this new invariant for general symplectic manifolds depends on the construction of \eqref{eq:bifunctor}, Seidel noticed that the product counts \emph{unquilted} pseudoholomorphic spheres with input marked points arranged along two circles, and thus the construction of $MC(M)$ for monotone $M$ was possible with classical methods.

\subsubsection{Categorification of Dehn twists and (symplectic) Khovanov homology}

Arnold's generalization of Dehn twists provide a key example of symplectomorphisms -- which Seidel proved to be non-isotopic to the identity by an exact triangle in Floer homology \cite{seidel_les} shown on the left for any Lagrangian sphere $C\subset M$ and admissible Lagrangians $L,L'\subset M$.
\begin{align*}
\xymatrix{
{\rm HF}(L,L') \ar[rr] &  &  {\rm HF}(L,\tau_C(L')) \ar[ld] \\
& {\rm HF}(L,C) \otimes {\rm HF}(C,L') \ar[lu] & 
}
\qquad
\xymatrix{
\Delta_M \ar[rr] &  &  {\rm gr}(\tau_C) \ar[ld] \\
&  C^T \# C \ar[lu] & 
}
\end{align*}
The technical infrastructure of the monotone symplectic 2-category allowed \cite{wehrheim_woodward_exact_triangle, perutz2008symplecticgysinsequence} to generalize this exact triangle to fibered Dehn twists along spherically fibered coisotropics $C\subset M$; and the conceptual framework allowed for a formulation as an exact triangle in the 2-category -- independent of the ``test branes'' $L,L'$ -- in the extended Fukaya category of $M^-\times M$.
In the case of a Lagrangian sphere $C\subset M$, this implies Seidel's exact triangle and a generalization to an exact triangle between the bimodules $CF^*(-,-)$, $CF^*(-,\tau_C-)$, and $CF^*(-,C)\otimes CF^*(-,C)$.

This exact triangle is central for Abouzaid--Smith's study \cite{abouzaid2019khovanov} of the relationship between the combinatorially defined Khovanov homology and symplectic Khovanov homology -- a knot invariant which \cite{SeidelSmith2006} constructed from the Floer homology of Lagrangians associated to braid representations of knots.
For more details, see \cite{abouzaid_bottman}[\S5.2].

\subsection*{Acknowledgments}

The second author's visits to the Max Planck Institute for Mathematics in Spring 2024 and summer 2025 played a crucial role in the development of this paper.
The authors thank MPIM for enabling this work.
The first author first conceived of associating operations to chains in fiber products of 2-associahedra in Spring 2019, and thanks Paul Seidel and Guillem Cazassus for their interest in this approach.
We are looking forward to the work of Andrew Manion and Chris Portwood building on the notions developed in this paper, which in particular led to several indexing bugs getting fixed in Definition~\ref{def:A_infty-2-category}.

\vfill

\pagebreak

\section{Preliminaries}
\label{sec:prelim}

In symplectic applications, the types of flow categories that we will introduce in section~\ref{sec:flow_cats} result from regularizing compactified moduli spaces of pseudoholomorphic curves and quilts -- which involves the choice of a technical framework to describe the moduli spaces and evaluation maps to base spaces in such a way that they allow for fiber products and push-pull constructions that induce the algebraic structure maps of the linear categories that we construct in section~\ref{sec:extracting}.
To facilitate non-partisan use of the new categorical concepts, we develop them without fixing the choice of a regularization approach.
Instead, we specify the key properties of a regularization framework:

\begin{definition} \label{def:framework}
A \emph{regularization framework} $(*)$ -- or short \emph{framework} -- consists of
\begin{enumerate}
\item[(i)]
a class of $(*)$moduli spaces that comes with a notion of boundary and a notion of a collection of $(*)$embeddings forming a ``system of boundary faces'', 
\item[(ii)]
a notion of $(*)$maps to $(*)$base spaces so that the fiber products in \eqref{eq:A_infty-flow_recursive} resp.\ \eqref{eq:type1_composition} -- \eqref{eq:type3_composition} yield $(*)$moduli spaces, and the notion includes the maps 
$p_{L^0 \ldots L^r}$ in \eqref{eq:a-b-maps} and $p_\cL$ in \eqref{eq:2-a-b-maps},
\item[(iii)]
$(*)$chain complexes over a coefficient ring (or field) $\Lambda$ on the $(*)$base spaces and $(*)$fundamental cycles on the $(*)$moduli spaces that allow for $\Lambda$-linear push-pull constructions under $(*)$maps as in \eqref{eq:push-pull} and \eqref{eq:2-push-pull}.
\end{enumerate}
\end{definition}

This definition has two purposes: First, it serves as a guide for the choice of a regularization framework for specific symplectic applications.
Second, it can be inserted into the definitions of section~\ref{sec:flow_cats} to create consistent definitions of flow categorical structures in any framework.
To maximize accessibility of the exposition of the new categorical concepts, we will mostly work in a simplified regularization framework whose existence in monotone settings and relation to the general case is discussed in Remark~\ref{rmk:cascades}.

\begin{definition} \label{def:C0-framework}
The framework $(*)=(\cC^0,\text{Morse})$ consists of the following notions: 
\begin{enumerate}
\item[(i)]
$(*)$moduli spaces are $\cC^0$-manifolds $X$ with boundary as in Definition~\ref{def:mfds_with_faces}, equipped with a locally constant energy function $\cE:X\to\bR$ so that $\cE^{-1}((-\infty,E])$ is compact for all $E\in\bR$; 
\item[(ii)]
$(*)$maps to $(*)$base spaces are either $\cC^0$-maps to finite sets (of critical points of Morse functions), or $\cC^0$-maps to the (2-)associahedra, viewed as compact $\cC^0$-manifolds with boundary;  
\item[(iii)]
$(*)$chain complexes are Morse chain complexes over the universal Novikov field $\Lambda$ generated by finite sets of critical points, and the relevant fundamental cycles over $\Lambda$ arise from well-defined counts of the $0$-dimensional parts of $\cC^0$-manifolds with fixed energy.
\end{enumerate}
\end{definition}

When working in the $(\cC^0,\text{Morse})$-framework, we will use the universal Novikov field 
\begin{align} \label{eq:novikov}
\Lambda \ \coloneqq \: \left\{\left. {\textstyle  \sum_{l=0}^\infty   a_l  T^{\lambda_l} }  \right| a_l\in\bZ_2, \lambda_l\in\bR , \lim_{l\to\infty} \lambda_l = +\infty   \right\}; 
\end{align}
see e.g.\ \cite[Def.1.3]{auroux_beginners_guide}.
Note in particular that it contains both infinite and finite series, as the latter can be expressed with $a_l=0$ for $l\geq l_0$.
Moreover, we use the following $\cC^0$-differential-geometric conventions.

\begin{definition}
\label{def:mfds_with_faces}
A \emph{$\cC^0$-manifold with boundary of dimension $n\in\bN$}
is a second-countable Hausdorff topological space $X$ 
equipped with an atlas of local homeomorphisms to open subsets of the half-space $[0,\infty)\times\bR^{n-1}$, such that all transition functions are continuous.
Its \emph{boundary} $\partial X$ is the union of preimages of the boundary $\{0\}\times\bR^{n-1}$.

A \emph{$\cC^0$-manifold with boundary} is a finite disjoint union $X = \bigsqcup_{n=0}^N X_n$ of $\cC^0$-manifolds with boundary $X_n$ of dimension $\dim X_n = n$.
Its \emph{$n$-dimensional part} is the component $X_n$ of dimension $n$.
Its \emph{boundary} $\partial X \coloneqq  \bigsqcup_{n=0}^N \partial X_n$ is the union of boundaries.
Its \emph{interior} $X^\circ \coloneqq  X \setminus \partial X$ is the complement of the boundary.

Given a $\cC^0$-manifold $X$ with boundary $\partial X$, a \emph{system of (codim 1)  boundary faces for $X$} is a finite collection of $\cC^0$-embeddings $\phi_i : F_i \to X$ of $\cC^0$-manifolds with boundary $F_i$ called \emph{faces} as follows:
\begin{enumerate}
\item[(i)]
The images of the faces lie in the boundary $\phi_i(F_i)\subset\partial X$, and the interiors of the faces are open subsets of the boundary $\phi_i(F_i^\circ)\subset \partial X$.
(Equivalently, the $n$-dimensional part of $F_i$ is embedded in the boundary of an $n+1$-dimensional part of $X$.) 
\item[(ii)]
The interiors of the faces are pairwise disjoint $\phi_i(F_i^\circ) \cap \phi_j(F_j^\circ)=\emptyset$ for $i\ne j$.
\item[(iii)]
The union of the interiors of the faces is dense in the boundary $\overline{\bigcup_i \phi_i(F_i^\circ)}=\partial X$.
\null\hfill$\triangle$
\end{enumerate}
\end{definition}

\begin{remark} \label{rmk:C0 manifolds}
We work with $\cC^0$-manifolds rather than smooth manifolds for two reasons: 
Firstly -- even for trivial isotropy in symplectic applications -- we cannot expect a smooth structure on the regularized moduli spaces of pseudoholomorphic quilts.
They will at best be smooth manifolds with generalized corners since already the underlying domain moduli spaces have generalized corners as e.g.\ in \cite[Figure 33]{abouzaid_bottman}.

Secondly, they suffice for our present purposes.
In particular note that any $1$-dimensional compact $\cC^0$-manifold with boundary $X$ is homeomorphic to a disjoint union of circles and closed intervals.
Now consider a system of boundary faces for $X$.
Its boundary $\partial X$ is a compact $\cC^0$-manifold of dimension $0$, i.e.\ finite unions of points.
Thus a system of boundary faces amounts to a partition $\partial X = \bigcup_i \phi_i(F_i^\circ)= \bigcup_i \phi_i(F_i)$ of its boundary into $0$-dimensional faces given by embeddings $\phi_i: F_i\to \partial X$, where each $F_i=F_i^\circ$ is a finite union of points.
These have well-defined counts $ \#_{\bZ_2} F_i = \#_{\bZ_2} \phi_i(F_i) \in \bZ_2$ modulo $2$ (i.e.\ odd or even), and since the total number of boundary points of any finite disjoint union of circles and intervals is even, we obtain the identity
$\textstyle \sum_i \#_{\bZ_2} F_i = 0 \in \bZ_2$.
\null\hfill$\triangle$
\end{remark}

The proofs of section \ref{sec:extracting} need a generalization of the identity resulting from systems of boundary faces for compact $\cC^0$-manifolds to noncompact moduli spaces in the regularization framework of Definition~\ref{def:C0-framework}.
This generalization will be formulated in terms of Novikov counts as follows.

\begin{definition}\label{def:Novikov count}
Let $Y$ be a $0$-dimensional $\cC^0$-manifold equipped with an energy function $\cE: Y\to\bR$ such that $\cE( Y ) = \{E_0, E_1, \ldots \}$ is a discrete set with $E_l < E_{l+1}$ and finitely many elements or $\lim_{l\to\infty} E_l = \infty$.
Assume in addition that each $\cE^{-1}(E_l)$ consists of finitely many points.
Then we define the Novikov count as
\begin{equation} \label{eq:Novikov count}
\#_\Lambda Y \:\coloneqq\: \textstyle \sum_{l=0}^\infty  \#_{\bZ_2}\cE^{-1}(E_l) \;  T^{E_l} .
\end{equation} 
\end{definition}

\begin{remark} \label{rmk:energies can be ordered}
The energy condition in Definition~\ref{def:Novikov count} follows if $Y$ is equipped with a locally constant energy function $\cE:Y\to\bR$ so that $\cE^{-1}((-\infty,E])$ is compact for all $E\in\bR$.
Indeed, compactness of $\cE^{-1}((-\infty,E])$ means that it can have only finitely many connected components, and thus $\cE(Y)\cap(-\infty,E]$ is finite for each $E\in\bR$ and thus can be ordered $\cE\bigl(\cE^{-1}((-\infty,E])\bigr) = \{ E_0 < E_1 < \ldots < E_L \}$.
As we increase $E\to\infty$, the ordered list of energy values either ends with finitely many entries or continues with $\lim_{l\to\infty} E_l = \infty$.
Here each $\cE^{-1}(E_l)$ is compact since it is a closed subset of the compact set $\cE^{-1}((-\infty,E])$.
If, moreover, $Y$ is of dimension $0$, then each $\cE^{-1}(E_l)$ is a compact manifold of dimension $0$, that is a finite number of points.
\end{remark}

\begin{lemma}\label{lem:Novikov product}
Let $Y_n$ for $1\leq n \leq N$ be $0$-dimensional $\cC^0$-manifolds equipped with energy functions $\cE_n: Y_n\to\bR$ as in Definition~\ref{def:Novikov count}.
Define an energy function $Y:=Y_1\times \ldots\times Y_N \to \bR$ by addition $(y_1,\ldots,y_N)\mapsto \cE_1(y_1)+\ldots+\cE_N(y_N)$.
Then the Novikov count is well-defined and multiplicative, 
\begin{equation}\label{eq:Novikov product}
\#_\Lambda \bigl( Y_1\times \ldots\times Y_N \bigr) \: = \:  \#_\Lambda Y_1 \: \cdot \: \ldots  \: \cdot \:   \#_\Lambda Y_N.
\end{equation}
\end{lemma}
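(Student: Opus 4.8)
The plan is to reduce the statement to elementary finite-cardinality arithmetic modulo $2$, once the discreteness hypotheses of Definition~\ref{def:Novikov count} have been verified for the product. There are really two things to establish: that $\#_\Lambda Y$ is defined at all, and that it equals the product of the factor counts. The second is formal; the first -- ensuring that the additive energy $\cE$ has discrete, finite level sets and energy values that are finite or tend to $+\infty$ -- is where the only genuine bookkeeping lies.

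First I would check well-definedness. Since each $Y_n$ is countable discrete (finite level sets over a countable spectrum), the finite product $Y = Y_1 \times \cdots \times Y_N$ is again a $0$-dimensional $\cC^0$-manifold. Each spectrum $\cE_n(Y_n)$ is bounded below, say by $E_0^{(n)}$, and meets every half-line $(-\infty,E]$ in a finite set. Hence for any bound $E$, a tuple $(e_1,\ldots,e_N) \in \prod_n \cE_n(Y_n)$ with $e_1 + \cdots + e_N \leq E$ forces each $e_n \leq E - \sum_{m\neq n} E_0^{(m)}$, so only finitely many such tuples exist. It follows that the additive spectrum $\cE(Y) = \{ e_1 + \cdots + e_N : e_n \in \cE_n(Y_n)\}$ is bounded below and meets each $(-\infty,E]$ in a finite set, hence is discrete and either finite or tends to $+\infty$. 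Moreover each level set
\[
\cE^{-1}(E) = \bigsqcup_{e_1 + \cdots + e_N = E} \cE_1^{-1}(e_1) \times \cdots \times \cE_N^{-1}(e_N)
\]
is a finite disjoint union (finitely many decompositions of $E$, by the bound just established) of finite sets (finite products of finite level sets), hence finite. This verifies the hypotheses of Definition~\ref{def:Novikov count} for $Y$, so $\#_\Lambda Y$ is defined.

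Next I would compute both sides and match coefficients of each power $T^E$. Expanding the product in $\Lambda$ and collecting monomials gives
\[
\#_\Lambda Y_1 \cdots \#_\Lambda Y_N = \sum_E \Bigl( \sum_{e_1 + \cdots + e_N = E} \prod_{n=1}^N \#_{\bZ_2} \cE_n^{-1}(e_n) \Bigr) T^E,
\]
where for each $E$ the inner sum is finite by the boundedness argument above -- this is exactly the finiteness that makes multiplication in $\Lambda$ well-defined. On the other side, the displayed level-set decomposition together with the elementary identities $\#_{\bZ_2}(A \times B) = \#_{\bZ_2} A \cdot \#_{\bZ_2} B$ and $\#_{\bZ_2}(A \sqcup B) = \#_{\bZ_2} A + \#_{\bZ_2} B$ in $\bZ_2$ (as in Remark~\ref{rmk:C0 manifolds}) yields
\[
\#_{\bZ_2} \cE^{-1}(E) = \sum_{e_1 + \cdots + e_N = E} \prod_{n=1}^N \#_{\bZ_2} \cE_n^{-1}(e_n) \in \bZ_2.
\]
Comparing the two displays coefficient-by-coefficient gives the asserted identity~\eqref{eq:Novikov product}.

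The main (and essentially only) obstacle is the discreteness bookkeeping of the first step: one must rule out infinitely many tuples contributing to a fixed total energy $E$, since otherwise neither $\#_\Lambda Y$ nor the product in $\Lambda$ would make sense. This is precisely the boundedness-below together with local finiteness of the energy spectra guaranteed by Definition~\ref{def:Novikov count} (and, in the geometric settings of interest, by the compactness of sublevel sets noted in Remark~\ref{rmk:energies can be ordered}). Once this finiteness is in hand, the remainder is the formal observation that the Novikov count sends disjoint unions to sums and Cartesian products to products of mod-$2$ cardinalities, i.e.\ that it is multiplicative by the convolution structure of $\Lambda$. An equivalent route is to treat the case $N=2$ and induct, using associativity of the product of spaces and of multiplication in $\Lambda$; I would present the direct $N$-fold version above to avoid re-invoking well-definedness at each inductive stage.
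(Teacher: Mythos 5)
Your proposal is correct and follows essentially the same route as the paper: the same bounding argument (using the lower bounds $E_{n,0}$ of the other factors to force each coordinate energy into a finite sublevel set) establishes discreteness and finiteness of level sets for the additive energy, and multiplicativity then follows by expanding the product and regrouping by total energy. The only difference is presentational -- you match coefficients of $T^E$ directly, using that each coefficient is a finite convolution sum, whereas the paper phrases the same computation via truncated sums and a limit $E\to\infty$.
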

\begin{proof}
Since the discrete energy values $\cE(Y_n)=\{ E_{n,0}, E_{n,1}, \ldots \}$ are discrete and bounded below for each $1\leq n \leq N$, their sums $\cE(Y)=\{ E_0:=E_{1,0}+\ldots+E_{n,0} , \ldots \}$ are discrete and bounded below as well.
Moreover, for each $E\in\bR$ we have the inclusion $\cE((-\infty, E]) \subset \cE_1^{-1}( (-\infty, E - E_0 + E_{1,0}] )\times \ldots \times \cE_N^{-1}( (-\infty, E - E_0 + E_{N,0}] )$ since $\cE_n(y_n)> E - E_0 + E_{n,0}$ would result in $\cE(y_1,\ldots,y_N) > \ldots+ E_{n-1,0}+E - E_0 + E_{n,0}+ E_{n+1,0}+\ldots = E$.
The right hand side of this inclusion is finite, which implies finiteness of all level sets of $\cE:Y\to\bR$.

To approximate $\#_\Lambda \cE((-\infty, E])$ consider the product 
\begin{align}
& \#_\Lambda  \cE_1^{-1}( (-\infty, E - E_0 + E_{1,0} =: \hat E_1] )\cdot \ldots \cdot \#_\Lambda \cE_N^{-1}( (-\infty, E - E_0 + E_{N,0} =: \hat E_N] ) 
\nonumber \\
& \: = \: 
\left( \textstyle \sum_{E_{1,l} \leq \hat E_1}  \#_{\bZ_2}\cE_1^{-1}(E_{1,l}) \;  T^{E_{1,l}} \right)
\cdot \ldots \cdot
\left( \textstyle \sum_{E_{N,l} \leq \hat E_N}  \#_{\bZ_2}\cE_N^{-1}(E_{N,l}) \;  T^{E_{N,l}} \right)
\\
& \: = \: 
\textstyle \sum_{E_{1,l} \leq \hat E_1} 
\cdot \ldots \cdot
 \textstyle \sum_{E_{N,l} \leq \hat E_N}  
 \#_{\bZ_2} \bigl( \cE_1^{-1}(E_{1,l}) \times \ldots \times \cE_1^{-1}(E_{N,l}) \bigr) \;  T^{E_{1,l}+ \ldots + E_{N,l}}  
 \nonumber \\
 & \: = \: 
\textstyle \sum_{E_l \leq E} 
\left(
 \textstyle \sum_{E_{1,l}+\ldots +E_{N,l}=E_l}  
 \#_{\bZ_2} \bigl( \cE_1^{-1}(E_{1,l}) \times \ldots \times \cE_1^{-1}(E_{N,l}) \bigr) \right) \;  T^{E_l}  
 + \sum_{E_l > E}  \ldots 
 \nonumber \\
  & \: = \: 
\textstyle \sum_{E_l \leq E}  
 \#_{\bZ_2} \cE^{-1}(E_l) \;  T^{E_l}  
 + \sum_{E_l > E}  \ldots  \; .
 \nonumber 
\end{align}
It differs from $\#_\Lambda Y$ only in the terms with $E_l>E$, thus taking the limit $E\to\infty$ proves \eqref{eq:Novikov product}.
\end{proof}

\begin{lemma} \label{lem:system of boundary faces induces identity}
Let  $(\phi_i:F_i \to X)_{i\in I}$ be a system of boundary faces as in Definition~\ref{def:mfds_with_faces} for a $1$-dimensional $\cC^0$-manifold with boundary $X$.
Suppose moreover that $X$ is equipped with a locally constant energy function $\cE:X\to\bR$ so that $\cE^{-1}((-\infty,E])$ is compact for all $E\in\bR$.
Then we have 
\begin{equation}\label{eq:boundary Novikov identity}
\textstyle
 \sum_{i\in I} \#_\Lambda F_i  \: = \:  0  \quad \in \Lambda  ,  
\end{equation}
where each Novikov count is defined as in \eqref{eq:Novikov count} using the induced energy functions $\cE\circ\phi_i \,:\: F_i\to\bR$.
\end{lemma}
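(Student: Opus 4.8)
The plan is to reduce the Novikov-graded identity \eqref{eq:boundary Novikov identity} to the $\bZ_2$-valued identity for compact $1$-dimensional $\cC^0$-manifolds already recorded in Remark~\ref{rmk:C0 manifolds}, by splitting $X$ into its energy level sets and handling each one separately.

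First I would check that each Novikov count $\#_\Lambda F_i$ is well-defined. Since $\cE$ is locally constant it is constant on each connected component of $X$, so every sublevel set $\cE^{-1}((-\infty,E])$ is a (compact) union of components, and the induced function on the $0$-dimensional face $F_i$ has finite sublevel sets $\phi_i^{-1}(\cE^{-1}((-\infty,E]))\subset\partial(\cE^{-1}((-\infty,E]))$; hence Remark~\ref{rmk:energies can be ordered} and Definition~\ref{def:Novikov count} apply to $\cE\circ\phi_i$. Because $I$ is finite, $\sum_{i\in I}\#_\Lambda F_i$ is again a well-defined element of $\Lambda$, and by construction its $T^E$-coefficient equals $\sum_{i\in I}\#_{\bZ_2}(\cE\circ\phi_i)^{-1}(E)\in\bZ_2$. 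It therefore suffices to prove that this coefficient vanishes for each $E$ in the discrete image $\cE(X)$.

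Fix such an $E$ and set $X_E\coloneqq\cE^{-1}(E)$. Since $\cE$ is locally constant, $X_E$ is a clopen union of connected components of $X$, and since it is a closed subset of the compact set $\cE^{-1}((-\infty,E])$ it is itself a compact $1$-dimensional $\cC^0$-manifold with boundary satisfying $\partial X_E=\partial X\cap X_E$. I would then verify that restricting each face to its energy-$E$ part, $\phi_i^E\coloneqq\phi_i|_{F_i^E}\colon F_i^E\to X_E$ with $F_i^E\coloneqq(\cE\circ\phi_i)^{-1}(E)$, produces a system of boundary faces for $X_E$ in the sense of Definition~\ref{def:mfds_with_faces}: property (i) holds because $\phi_i(F_i^E)\subset\partial X\cap X_E=\partial X_E$; property (ii) is inherited from the disjointness of the $\phi_i(F_i^\circ)$; and property (iii) holds because $\bigcup_i\phi_i(F_i^\circ)$ is dense in $\partial X$, so its intersection with the clopen set $X_E$ is dense in the finite, hence discrete, set $\partial X_E$ and therefore equals it.

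Finally I would apply Remark~\ref{rmk:C0 manifolds} to the compact manifold $X_E$ with this system of boundary faces, obtaining $\sum_{i\in I}\#_{\bZ_2}F_i^E=0\in\bZ_2$, which is exactly the vanishing of the $T^E$-coefficient of $\sum_{i\in I}\#_\Lambda F_i$. Letting $E$ range over $\cE(X)$ kills every coefficient and yields \eqref{eq:boundary Novikov identity}. I expect the main obstacle to be the bookkeeping in the previous step, namely confirming that $X_E$ is compact and that the level-set restriction genuinely defines a system of boundary faces; the underlying $\bZ_2$-statement and the final coefficient-wise assembly are then immediate.
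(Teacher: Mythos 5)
Your proposal is correct and follows essentially the same route as the paper's own proof: decompose $X$ into the compact level sets $\cE^{-1}(E)$, restrict the system of boundary faces to each level set, apply the even-boundary-point parity argument of Remark~\ref{rmk:C0 manifolds}, and reassemble the resulting $\bZ_2$-identities coefficient-wise into the Novikov identity \eqref{eq:boundary Novikov identity}. The only (welcome) difference is that you verify explicitly that the restricted faces satisfy conditions (i)--(iii) of Definition~\ref{def:mfds_with_faces}, a point the paper's proof asserts without detail.
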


\begin{proof}
As in Remark~\ref{rmk:C0 manifolds}, a system of boundary faces for a $1$-dimensional $\cC^0$-manifold is given by a partition of its $0$-dimensional boundary $\partial X = \bigcup_{i\in I} \phi_i(F_i)$ into finitely many images of embeddings $\phi_i:F_i\to \partial X$ of $0$-dimensional $\cC^0$-manifolds $F_i$.
As in Remark~\ref{rmk:energies can be ordered} the energy values on the $1$-dimensional manifold form a discrete set $\cE(X) = \{E_0, E_1, \ldots \}$  which can be ordered $E_l < E_{l+1}$, and has finitely many elements or $\lim_{l\to\infty} E_l; = \infty$.
Moreover, each $\cE^{-1}(E_l)$ is compact.
Thus each $X_l \coloneqq \cE^{-1}(E_l)$ is a compact $1$-dimensional $\cC^0$-manifold with boundary.
It inherits a system of boundary faces $\partial X_l = \bigcup_{i\in I} \phi_i(F_{i,l})$ given by $F_{i,l} \coloneqq \{ f\in F_i \,|\, \cE(\phi_i(f))=E_l \}$ -- some of which may be empty.
As in Remark~\ref{rmk:C0 manifolds}, $X_l$ is homeomorphic to a finite disjoint union of circles and closed intervals, thus has an even number of boundary points, which implies the identity
\begin{equation}
\textstyle
 \sum_{i\in I} \#_{\bZ_2} F_{i,l}  \: = \:  \#_{\bZ_2} \bigcup_{i\in I}\phi_i(F_{i,l})  \: = \:  \#_{\bZ_2} \partial X_l  \: = \:  0  \quad \in \bZ_2  .
\end{equation}
Now summing over any finite subset of energies $E_1<E_2<\ldots<E_N$ implies  
\begin{equation}
\textstyle
 \Lambda \ni \: 0 
\: = \:  
\sum_{l=1}^N \bigl(  \sum_{i\in I}   \#_{\bZ_2} F_{i,l}  \bigr) \; T^{E_l}
 \: = \:
 \sum_{i\in I}  \bigl(   \sum_{l=1}^N \#_{\bZ_2} F_{i,l} \; T^{E_l}  \bigr),
\end{equation}
where we can change the order of summation since both sums are finite.
Now for each fixed $i\in I$ the expression $\sum_{l=1}^L \#_{\bZ_2} F_{i,l} \; T^{E_l}$ converges to $\#_\Lambda F_i$ for $L\to\infty$ since the energy values of $\cE\circ\phi_i:F_i\to\bR$ are a subset of those of $\cE:X\to\bR$.
That results in the overall Novikov identity
\begin{equation}
\textstyle
 \Lambda \ni \: 0 
 \: = \:
  \sum_{i\in I}  \bigl(   \sum_l \#_{\bZ_2} F_{i,l} \; T^{E_l}  \bigr)
 \: = \:
\sum_{i\in I} \#_\Lambda F_i  .
\end{equation}
\vspace{-5mm}
\end{proof}

As a final preliminary, we will use the following notation conventions for categories.

\begin{definition} \label{def:category}
A \emph{category} $\sC$ consists of:
\begin{itemize}
\item
A set $\Ob=\Ob_\sC$ of objects.

\smallskip

\item
A set $\Mor=\Mor_\sC$ of morphisms with source and target maps $\sigma,\tau:\Mor\to\Ob$.
This induces for every $M_0, M_1 \in \Ob$, a set $\Mor_\sC(M_0,M_1) \coloneqq \sigma^{-1}(M_0)\cap\tau^{-1}(M_1)$.

\smallskip

\item
An associative binary operation 
$\Mor \leftindex_\sigma\times_\tau \Mor_\sC  \:\to\: \Mor_\sC$, $(L_{01},L_{12}) \:\mapsto\: L_{01}\circ L_{12}$ 
with $\sigma(L_{01}\circ L_{12})=\sigma(L_{01})$ and $\tau(L_{01}\circ L_{12})=\tau(L_{12})$.
This induces a well-defined composition of tuples of any length $k\geq 1$,  
$$ 
L_{01}\circ L_{12} \circ \ldots \circ L_{(k-1)k} \:\in\: \Mor_\sC(M_0,M_k)
$$
for any $L_{01}\in \Mor_\sC(M_0,M_1), L_{12}\in \Mor_\sC(M_1,M_2), \ldots,  L_{(k-1)k}\in \Mor_\sC(M_{k-1},M_k)$.

\item
Identity morphisms $\id_M\in\Mor_\sC(M,M)$ for all $M\in\Ob$ so that $\id_{M_0}\circ L_{01}=L_{01}$ and $L_{01}\circ \id_{M_1}=L_{01}$ holds for any $L_{01}\in \Mor_\sC(M_0,M_1)$.
\end{itemize}
\end{definition}

\section{$A_\infty$- and $(A_\infty,2)$-flow categories}
\label{sec:flow_cats}

In this section, we define the notions of $A_\infty$- and $(A_\infty,2)$-flow categories.
We formalize the notion of an $A_\infty$-flow category in a general class of spaces and maps as described in  section~\ref{sec:prelim}.
Flow categories were first introduce by Cohen-Jones-Segal \cite{cohen-jones} with an eye towards Floer theory, but no specific applications.
They were formalized in \cite[Def.2.9]{zhou-morse-bott} for application to (equivariant) Morse-Bott theory, and in \cite[Def.3.4]{abouzaid-blumberg} towards Floer homotopy theory.
The following notion of an $A_\infty$-category is a new formalization of existing constructions of Fukaya categories \cite{fooo_12,seidel_picard-lefschetz}.
The subsequent notion of an $(A_\infty,2)$-flow category provides the blueprint for generalizing the monotone symplectic 2-category developed by Wehrheim-Woodward \cite{wehrheim_woodward_2010functoriality} to a construction on chain level that includes all compact symplectic manifolds and Lagrangian relations.

\begin{definition} \label{def:general_A_infty-flow_category}
A \emph{regularized $A_\infty$-flow category $\sC$ in a framework $(*)$} as in Definition~\ref{def:framework} consists of:
\begin{itemize}
\item
A set $\Ob=\Ob_\sC$ of objects.

\smallskip

\item
For every $L, L' \in \Ob$, a $(*)$base space $\Mor(L,L')=\Mor_\sC(L,L')$.

\smallskip

\item
For every $n \geq 0$ and $L^0, \ldots, L^n \in \Ob$, a $(*)$moduli space $\cX(L^0,\ldots,L^n)$ that is equipped with $(*)$maps 
-- called {\it evaluation maps} $\alpha^\cdot_{\cdots}, \beta_{\cdots}$ resp.\ {\it forgetful map} $p_{\cdots}$ -- 
\begin{align} 
\label{eq:a-b-maps}
\xymatrix{
&&& \Mor(L^{k-1},L^k)
\quad\text{for}\quad k = 1, \ldots, n \\
\cX(L^0,\ldots,L^n) \ar@/^1.5pc/[urrr]^(0.4){\alpha^k_{L^0 \ldots L^n}}\ar[rrr]^{\beta_{L^0 \ldots L^n}}\ar@/_1.0pc/[drrr]_{p_{L^0 \ldots L^n}} &&& \Mor(L^0,L^n) \\
&&& K_n
}
\end{align}
where $K_n$ is the $(n-2)$-dimensional associahedron.

\smallskip

\item
For every choice of integers $n,s,t\geq 0$ with $s+t\leq n$ 
and objects $L^0,\ldots,L^n \in \Ob$, a $(*)$embedding -- called a \emph{boundary decomposition map} --
\begin{align}
\label{eq:A_infty-flow_recursive}
\varphi^{s,t}_{L^0 \ldots L^n}
\colon
\cX(L^0,\ldots,L^s,L^{s+t},\ldots,L^n)
\:\leftindex_{\alpha^{s+1}}\times_\beta\:
\cX(L^s,\ldots,L^{s+t})
\to
\cX(L^0,\ldots,L^n).
\end{align}
\end{itemize}

\noindent
We require that these boundary composition maps satisfy the following properties.

\begin{enumerate}
\item[\bf(i)]
Each boundary decomposition map $\varphi_{L^0 \ldots L^n}^{s,t}$ covers the corresponding operadic composition map $\sigma_{s+1}$ on associahedra, in the sense that the following square commutes:
\begin{align}\label{eq:cover operadic composition}
\xymatrix{
\cX(L^0,\ldots,L^s,L^{s+t},\ldots,L^n)
\:\leftindex_{\alpha^{s+1}}\times_\beta\:
\cX(L^s,\ldots,L^{s+t})
\ar[rr]^(0.7){\varphi^{s,t}_{L^0 \ldots L^n}} \ar[d]_{p_{L^0 \ldots L^s,L^{s+t} \ldots L^n }\times p_{L^s\ldots L^{s+t}}} &&
\cX(L^0,\ldots,L^n) \ar[d]^{p_{L^0 \ldots L^n}} \\
K_{n-t+1} \times K_t \ar[rr]^{\sigma_{s+1}} && K_n.
}
\end{align}
\item[\bf(ii)]
Each boundary decomposition map $\varphi_{L^0 \ldots L^n}^{s,t}$ is compatible with the evaluation maps as follows.
Denoting $\cL \coloneqq (L^0,\ldots,L^n)$, $\cL' \coloneqq  (L^0 \ldots L^s,L^{s+t} \ldots L^n)$, and $\cL'' \coloneqq  (L^s ,\ldots, L^{s+t})$ we require for all $(\chi',\chi'')\in \cX(\cL')\leftindex_{\alpha^{s+1}}\times_\beta \cX(\cL'')$ and $\chi \coloneqq \varphi_{L^0 \ldots L^n}^{s,t}(\chi',\chi'')$ 
\begin{equation}
\label{eq:compatibility-with-alpha-beta}
\beta_{\cL}(\chi)= \beta_{\cL'}(\chi')
\qquad\text{and}\qquad
\alpha^k_{\cL}(\chi) \;=\; 
\begin{cases}
 \alpha^k_{\cL'}(\chi') & \quad\text{for} \;  k \leq s    \\
\alpha^{k-s}_{\cL''}(\chi'') & \quad \text{for} \; s+1 \leq k \leq s+t \\
 \alpha^{k-t+1}_{\cL'}(\chi') & \quad\text{for} \;  k \geq s+t + 1 .
\end{cases}
\end{equation}

\item[\bf(iii)]
For any $n \geq 0$ and $L^0,\ldots,L^n \in \Ob$ the collection of boundary decomposition maps $\varphi_{L^0 \ldots L^n}^{s,t}$ 
for $s,t\geq 0$ with $s+t\leq n$ form a system of boundary faces for $\cX(L^0,\ldots,L^n)$.

\smallskip

\item[\bf(iv)]
The boundary decomposition maps are associative in the sense that the
following two diagrams commute for any choice of $n\geq 0$, objects $L^0,\ldots,L^n \in \Ob$, 
and the two combinatorial possibilities for obtaining this string of $n+1$ objects by inserting substrings of length 
$t\geq 0$ and $t'\geq 0$ at the positions $s$ and $s+s'$ into a string of $n-t-t'+1$ objects.
\begin{itemize}
\item[\bf(a)]
The first case -- an insertion within the insertion -- is described by integers $s,s',t,t'\geq 0$ with $s'\leq t$ and $s+t+t' \leq n$ and requires the commuting diagram
\begin{align}
\label{eq:associativity-case-1}
\xymatrix{
\cX(\cL_1)
\leftindex_{\alpha^{s+1}}\times_\beta
\cX(\cL_2)
\leftindex_{\alpha^{s'+1}}\times_\beta
\cX(\cL_3)
\ar[rrr]^{\hspace{2em}\id\times\varphi^{s',t'}_{\cL_{23}}}
\ar[d]_{\varphi^{s,t}_{\cL_{12}}\times\id}
&&&
\cX(\cL_{1})
\leftindex_{\alpha^{s+1}}\times_\beta
\cX(\cL_{23})
\ar[d]^{\varphi^{s,t+t'}_{L^0 \ldots L^n}}
\\
\cX(\cL_{12})
\leftindex_{\alpha^{s+s'+1}}\times_\beta
\cX(\cL_{3})
\ar[rrr]_{\varphi^{s+s',t'}_{L^0 \ldots L^n}}
&&&
\cX(L^0 \ldots L^n)
}
\end{align}
where we write
$\cL_1 \coloneqq (L^0 \ldots L^s,L^{s+t+t'} \ldots L^n)$, 
$\cL_2 \coloneqq (L^s \ldots L^{s+s'},L^{s+s'+t'} \ldots L^{s+t+t'})$,
$\cL_3 \coloneqq (L^{s+s'} \ldots L^{s+s'+t'})$.
Then 
$\cL_{23} \coloneqq (L^s \ldots L^{s+t+t'})$ is obtained by inserting $\cL_3$ into $\cL_2$
and
$\cL_{12} \coloneqq (L^0 \ldots L^{s+s'},L^{s+s'+t'} \ldots L^n)$ is obtained by inserting $\cL_2$ into $\cL_1$.
\item[\bf(b)]
The second case -- two independent insertions -- is described by integers $s,s',t,t'\geq 0$ with $s+t+s'+t' \leq n$ and requires the commuting diagram
\begin{align}
\label{eq:associativity-case-2}
\xymatrix{
\cX(\cL_1)
\leftindex_\beta\times_{\alpha^{s+1}}
\cX(\cL_2)
\leftindex_{\alpha^{s+s'+2}}\times_\beta
\cX(\cL_3)
\ar[rrr]^{\hspace{2em} \id \times \varphi^{s+s',t'}_{\cL_{23}}}
\ar[d]_{\varphi^{s,t}_{\cL_{12}} \times \id}
&&&
\cX(\cL_1)
\leftindex_\beta\times_{\alpha^{s+1}}
\cX(\cL_{23})
\ar[d]^{\varphi^{s,t}_{L^0 \ldots L^n}}
\\
\cX(\cL_{12})
\leftindex_{\alpha^{s+t+s'+1}}\times_\beta
\cX(\cL_{3})
\ar[rrr]_{\varphi^{s+t+s',t'}_{L^0 \ldots L^n}}
&&&
\cX(L^0 \ldots L^n)
}
\end{align}
with 
$\cL_1 \coloneqq (L^s \ldots L^{s+t})$, 
$\cL_2 \coloneqq (L^0 \ldots L^s,L^{s+t} \ldots L^{s+t+s'},L^{s+t+s'+t'} \ldots L^n)$,
$\cL_3 \coloneqq (L^{s+t+s'} \ldots L^{s+t+s'+t'})$.
Then 
$\cL_{12} \coloneqq (L^0 \ldots L^{s+t+s'},L^{s+t+s'+t'} \ldots L^n)$
results by inserting $\cL_1$ into $\cL_2$ 
and
$\cL_{23} \coloneqq (L^0 \ldots L^s,L^{s+t} \ldots L^n)$
results by inserting $\cL_3$ into $\cL_2$.$\triangle$
\end{itemize}
\end{enumerate}
\end{definition}

\begin{remark}
This notion of a regularized $A_\infty$-flow category is a combination of properties of the unregularized moduli spaces -- which allow for their coherent regularization -- and properties of the regularized moduli spaces -- which allow for the construction of a linear $A_\infty$-category in Proposition~\ref{prop:A-infinity_flow_to_linear}.
The latter result requires only properties (ii) and (iii) -- thus could be achieved without reference to the maps $p_{L^0 \ldots L^n}$ to the underlying associahedra $K_n$.
However, these maps and property (i) is what gives this categorical structure its name.
Property (iv) is a natural feature of the unregularized moduli spaces, resultingw from the operadic structure of associahedra (which concerns the domains of the maps in the moduli space) and Gromov compactness (which concerns the possible degenerations of the maps themselves).
It is crucial for achieving property (iii) for the regularized moduli spaces -- by an iterative process which will naturally preserve property (iv).
\null\hfill$\triangle$
\end{remark}

Proposition~\ref{prop:A-infinity_flow_to_linear} outlines how a regularized $A_\infty$-flow category in any regularization framework induces a linear $A_\infty$ category.
To give a precise yet accessible proof, we will work in the simplified 
$(\cC^0,\text{Morse})$-framework of Definition~\ref{def:C0-framework}, as specified in Definition~\ref{def:A_infty-flow_category} below.

\begin{remark} \label{rmk:cascades}
Given a regularized $A_\infty$-flow category $\sC$ in any framework $(*)$, the $(*)$maps to $(*)$base spaces can be replaced by maps to finite sets by an application of the homological perturbation lemma as explained in 
 \cite[2.4]{zhou-morse-bott} -- as long as the $(*)$chain complexes on the $(*)$base spaces are quasi-equivalent to Morse chain complexes via a projection-homotopy relation.
The projections and homotopies in this relation are typically given by half-finite and finite length Morse flow lines, and the homological perturbation lemma then informs the so-called cascades construction, in which these Morse flow lines are coupled with the $(*)$moduli spaces via the evaluation maps $\alpha,\beta$ in \eqref{eq:a-b-maps} to build so-called cascade moduli spaces.
(In the context of moduli spaces of pseudoholomorphic curves and quilts, this cascade construction is outlined in \cite[4.3]{bottman_wehrheim}.) In most regularization frameworks, these will also inherit the structure of a $(*)$moduli space.
Now the simplifying assumption that would yield a regularized $A_\infty$-flow category in the $(\cC^0,\text{Morse})$-framework is that the cascade moduli spaces can be given the structure of $\cC^0$-manifolds with boundary.

In symplectic applications, this is both a gross oversimplification and the appropriate setting to develop new categorical structures.
It can be achieved by geometric assumptions -- such as monotonicity, as shown in \cite{wehrheim_woodward_jhol_quilts} -- which rule out isotropy and guarantee that equivariant transversality of the Cauchy-Riemann operator can be achieved.
However, when dealing with general symplectic manifolds, the issues of isotropy and obstructions to equivariant transversality need to be resolved by a more refined regularization framework that replaces $\cC^0$-manifolds with e.g.\ weighted branched orbifolds or another type of $(*)$moduli spaces with (relative / virtual / ...) fundamental chains.
The overall structure formed by these moduli spaces will remain the same -- just requires working with chain complex with more refined coefficient rings, which would needlessly complicate the exposition of the new type of higher categorical structures that is the purpose of this paper.
\null\hfill$\triangle$
\end{remark}

\begin{definition} \label{def:A_infty-flow_category}
A \emph{regularized $A_\infty$-flow category $\sC$ in the $(\cC^0,\text{Morse})$-framework of Definition~\ref{def:C0-framework}} consists of:
\begin{itemize}
\item
A set $\Ob=\Ob_\sC$.

\smallskip

\item
For every $L,L' \in \Ob$, a finite set $\Mor(L,L')=\Mor_\sC(L,L')$.
\smallskip

\item
For every $n \geq 0$ and $L^0, \ldots, L^n \in \Ob$, a $\cC^0$-manifold with boundary $\cX(L^0 ,\ldots, L^n)$ that is equipped with a locally constant energy function $\cE:\cX(L^0 \ldots L^n)\to\bR$ such that $\cE^{-1}((-\infty,E])$ is compact for all $E\in\bR$, and with $\cC^0$ evaluation and forgetful maps 
\begin{align} 
\xymatrix{
&&& 
\Mor(L^{k-1},L^k)
\quad\text{for}\quad k = 1, \ldots, n \\
\cX(L^0,\ldots,L^n) \ar@/^1.5pc/[urrr]^(0.4){\alpha^k_{L^0 \ldots L^n}}
\ar[rrr]^{\beta_{L^0 \ldots L^n}}\ar@/_1.0pc/[drrr]_{p_{L^0 \ldots L^n}} &&& \Mor(L^0,L^n) \\
&&& K_n
}
\end{align}
where $K_n$ is the $(n-2)$-dimensional associahedron.

\smallskip

\item
For every choice of integers 
$n,s,t\geq 0$ with $s+t\leq n$ 
and objects $L^0,\ldots,L^n \in \Ob$, a $\cC^0$-embedding 
-- called a \emph{boundary decomposition map} -- 
as in \eqref{eq:A_infty-flow_recursive},
\begin{align}
\varphi^{s,t}_{L^0 \ldots L^n}
\colon
\cX(L^0 \ldots L^s,L^{s+t} \ldots L^n)
\:\leftindex_{\alpha^{s+1}}\times_\beta\:
\cX(L^s \ldots L^{s+t})
\to
\cX(L^0 \ldots L^n).
\end{align}
\end{itemize}
We require that these boundary composition maps satisfy the properties (i)--(iv) as in Definition~\ref{def:general_A_infty-flow_category}: 
\begin{enumerate}
\item[\bf(i)]
Each boundary decomposition map $\varphi_{L^0 \ldots L^n}^{s,t}$ covers the corresponding operadic composition map $\sigma_{s+1}$ on associahedra as in \eqref{eq:cover operadic composition}.

\item[\bf(ii)]
Each boundary decomposition map $\varphi_{L^0 \ldots L^n}^{s,t}$ is compatible with the evaluation maps $\alpha^k_{L^0\ldots L^n}$ and $\beta_{L^0\ldots L^n}$ as in \eqref{eq:compatibility-with-alpha-beta} and makes the energy functions additive in the sense that 
\begin{equation}\label{eq:energy additive}
\cE\bigl( \varphi_{L^0 \ldots L^n}^{s,t}(\chi',\chi'') \bigr) \: = \: \cE(\chi') + \cE(\chi'') .
\end{equation}

\item[\bf(iii)]
For any $L^0,\ldots,L^n \in \Ob$ the collection of boundary decomposition maps $\varphi_{L^0 \ldots L^n}^{s,t}$ for $s,t\geq 0$ with $s+t\leq n$ forms a system of boundary faces for $\cX(L^0,\ldots,L^n)$ in the sense of Def.~\ref{def:mfds_with_faces}.

\item[\bf(iv)]
The boundary decomposition maps are associative in the sense that the two types of diagrams \eqref{eq:associativity-case-1} and \eqref{eq:associativity-case-2} commute for any choice of $n\geq 0$, objects $L^0,\ldots,L^n \in \Ob$, 
and the two combinatorial possibilities for obtaining this string of $n+1$ objects by inserting two strings of $\geq 0$ objects into a given string of objects.
\null\hfill$\triangle$
\end{enumerate}
\end{definition}

The definition of an $(A_\infty,2)$-flow category is similar.
It is a higher-categorical structure in which morphisms $\Mor(M_0,M_1)$ between two fixed objects have the structure of an $A_\infty$-flow category, as we will see in Lemma~\ref{lem:mor is cat}.
Its higher categorical structure then results from 2-composition data for any finite tuple $M_0,\ldots,M_r$ of objects and multi-tuples of 1-morphisms between them, which covers the associative composition of 1-morphisms
\begin{equation}
\Mor(M_0,M_1)\times \Mor(M_1,M_2) \times \ldots \times \Mor(M_{r-1},M_r) \quad \longrightarrow\quad  \Mor(M_0,M_r).
\end{equation}

\begin{remark}
In symplectic applications, the $A_\infty$-structure on morphisms with fixed source and target is a flow category version of the Fukaya category of the product symplectic manifold $M_0^-\times M_1$.
Beyond that, the unregularized symplectic $(A_\infty,2)$-flow category as envisioned in \cite[\S4]{bottman_wehrheim} and  \cite[\S4]{abouzaid_bottman} contains 2-composition data -- a moduli space $\cM(\cL)$ with various evaluation maps -- for any finite tuple $M_0,\ldots,M_r$ of objects and a multi-tuple $\cL$ of tuples of 1-morphisms in each of $\leftindex^1\Mor(M_0,M_1), \leftindex^1\Mor(M_1,M_2),\ldots,\leftindex^1\Mor(M_{r-1},M_r)$.
Describing the relative operadic structure of the expected boundary of these moduli spaces requires the consideration of fiber products 
\begin{equation}
\textstyle
\prod^{K_r}_{1\leq j \leq a} \cM(\cL^j) \:  \coloneqq \:  
\bigl\{ (m_1,\ldots, m_a) \in  \cM(\cL^1)\times\ldots\times  \cM(\cL^a) \, \big| \, \pi_{\cL^1}(m_1)=\ldots = \pi_{\cL^a}(m_a)  \bigr\} .
\end{equation}
Here $\cL^1,\ldots,\cL^a$ are $a\geq 1$ multi-tuples as above for the same objects $M_0,\ldots,M_r$, and $\prod^{K_r}_{1\leq j \leq a}$ indicates their iterated fiber product with respect to maps $\pi_{\cL^j}=\pi\circ p_{\cL^j}: \cL^j \to K_r$ for $1\leq j \leq a$ which factor through the forgetful maps $\pi:W_\bn \to K_r$ from 2-associahedra to associahedra.
As discussed in the introduction, these fiber products present a conceptual challenge already at the level of 2-associahedra and the resulting algebraic notion of $(A_\infty,2)$-category.

The resolution presented in this work 
is to expand the collection of 2-associahedra by including their iterated fiber products as in \eqref{eq:O}.
At the level of moduli spaces this means that we consider the fiber products $\prod^{K_r}_{1\leq j \leq a} \cM(\cL^j) =: \cM(\cL)$ as separate moduli spaces and regularize them without necessarily preserving the fiber product structure.
This leads to regularized composition data for tuples $\cL \coloneqq (\cL^j)_{1\leq j\leq a}$ of multi-tuples $\cL^j = (L_{(i-1)i}^{j,k})_{1\leq i \leq r, 1\leq k \leq n^j_i}$ in Definition~\ref{def:general A_infty-2-flow_category} below.

The fact that in certain circumstances -- in particular when $K_r$ is trivial so that the fiber product is actually a Cartesian product -- the moduli spaces could be regularized compatibly with the (fiber) product relations will then be formulated as an additional property of $(A_\infty,2)$-flow categories in Definition~\ref{def:compatible}.
\null\hfill$\triangle$
\end{remark}

\begin{definition}
\label{def:general A_infty-2-flow_category}
A \emph{regularized $(A_\infty,2)$-flow category $2\sC$ in a framework $(*)$} as in Definition~\ref{def:framework} consists of:

\begin{itemize}
\item
A category $(\Ob, \leftindex^1\Mor)=(\Ob_{2\sC}, \leftindex^1\Mor_{2\sC})$.

\smallskip

\item
For every $L, L' \in  \leftindex^1\Mor$, a $(*)$base space 
$\leftindex^2\Mor(L,L')=\leftindex^2\Mor_{2\sC}(L,L')$.

\smallskip

\item
For every choice of integers $r \geq 1, a \geq 1$, tuples of integers 
$\underline\bn=(\bn^1, \ldots, \bn^a) \in \bigl(\bZ_{\geq0}^r\bigr)^a$, 
collections of objects $M_0,\ldots,M_r\in\Ob$ and 
a \emph{collection of 1-morphisms of shape $\ul\bn$}
\begin{align}   \label{eq:curly-L}
\cL & = 
\left(\begin{array}{c}
 \cL^1 \coloneqq \left( L_{(i-1)i}^{1,k} \right)_{ 1\leq i\leq r , 0\leq k\leq n_i^1} 
\\ \vdots \\ 
\cL^a \coloneqq  \left(L_{(i-1)i}^{a,k} \right)_{ 1\leq i\leq r , 0\leq k\leq n_i^a}
\end{array}\right)
\: = \: \Bigl(L_{(i-1)i}^{j,k} \in  \leftindex^1\Mor(M_{i-1},M_i) \Bigr)_{(i,j,k)\in I^{\, r,a}_{\underline\bn} }  \\
& \qquad\qquad\qquad \text{indexed by}  \qquad
 I^{\, r,a}_{\underline\bn}  \coloneqq  \{ (i,j,k) \,|\,  1\leq i\leq r , 1\leq j\leq a, 0\leq k\leq n_i^j  \} 
\nonumber
\end{align}
a $(*)$moduli space $\cX(\cL)$ that is equipped with $(*)$maps
-- called {\it evaluation maps} $\alpha^{\cdots}_\cL, \beta^{\cdots}_\cL$ resp.\ {\it forgetful map} $p_\cL$ -- 
\begin{align}
\label{eq:2-a-b-maps}
\xymatrix{
&&& \leftindex^2\Mor(L_{(i-1)i}^{j,k-1},L_{(i-1)i}^{j,k}) 
\quad\text{for}\quad (i,j,k)\in  {'\!I^{\, r,a}_{\underline\bn}}  \\
\cX(\cL) \ar@/^1.0pc/[urrr]^(0.3){\alpha^{i,j,k}_\cL}\ar[rrr]^(0.3){\beta^j_\cL}\ar@/_1.0pc/[drrr]_(0.3){p_\cL} &&& \leftindex^2\Mor(L_{01}^{j,0}\circ\cdots\circ L_{(r-1)r}^{j,0},L_{01}^{j,n_1^j}\circ\cdots\circ L_{(r-1)r}^{j,n_r^j}) 
\quad\text{for}\quad 1\leq j\leq a 
\\
&&& 
\widetilde W_{\underline\bn} \coloneqq \prod^{K_r}_{1\leq j \leq a}  W_{\bn^j}
}
\end{align}
Here pairs of consecutive 1-morphisms are indexed by 
$$
'\!I^{\, r,a}_{\underline\bn}  \coloneqq  \{ (i,j,k) \,|\,  1\leq i\leq r , 1\leq j\leq a, 1\leq k\leq n_i^j  \}.
$$
$W_\bn$ is the $\bn$-th 2-associahedron as in \cite{bottman_realizations,bottman_oblomkov}, 
and 
$\prod^{K_r}_{1\leq j\leq a} W_{\bn^j}$ indicates the iterated fiber product with respect to the forgetful maps $W_{\bn^j}\to K_r$ on each of the $a$ factors.

\smallskip

\item
For every choice of integers $r \geq 1, a \geq 1$, $\underline\bn\in \bigl(\bZ_{\geq0}^r\bigr)^a$, and collection of 1-morphisms $\cL$ as in \eqref{eq:curly-L}, $(*)$-embeddings -- called \emph{boundary decomposition maps} -- of three types: 

\begin{enumerate}
\item
Insertion of $t\geq 0$ choices of 1-morphisms between objects $M_{i-1}$ and $M_i$ 
for $1\leq i\leq r$ in a block $1\leq j \leq a$ at position $s\geq 0$ for $s+t\leq n^j_i$ 
gives rise to a \emph{type-1 boundary decomposition map}
\begin{equation}
\label{eq:type1_composition}
\varphi^{(1)}_{i,j,s,t} 
\: \colon \quad
\cX(\cL'\coloneqq\cL^{(1) \rm in}_{i,j,s,t}) \:\leftindex_{\underline\alpha'} \times_{\underline\beta''}\: \cX(\cL''\coloneqq\cL^{(1) \rm out}_{i,j,s,t}) \quad \longrightarrow \quad \cX(\cL)
\end{equation}
where we denote $\underline\alpha' \coloneqq \alpha^{i,j,s+1}_{\cL'}$, 
$\underline\beta'' \coloneqq \beta^1_{\cL''}$, 
and
\begin{align}
\cL'\coloneqq\cL^{(1) \rm in}_{i,j,s,t} & \coloneqq 
\left(\begin{array}{c}
\vdots  \\ \cL^{j-1} \\  {\cL'}^j_{i,s,t} \\  \cL^{j+1} \\ \vdots 
\end{array}\right)
\quad\text{with}\quad
{\cL'}^j_{i,s,t} \coloneqq 
\left(\begin{array}{ccccc}
\cdots & L_{(i-2)(i-1)}^{j,0} & L_{(i-1)i}^{j,0} & L_{i(i+1)}^{j,0} & \cdots \\
 && \vdots && \\
& \vdots & L_{(i-1)i}^{j,s} & \vdots &   \\
 & \vdots & L_{(i-1)i}^{j,s+t} & \vdots & \\
 && \vdots && \\
\cdots & L_{(i-2)(i-1)}^{j,n^j_{i-1}} & L_{(i-1)i}^{j,n^j_i} & L_{i(i+1)}^{j,n^j_{i+1}} & \cdots
\end{array}\right), 
 \\
\cL''\coloneqq\cL^{(1) \rm out}_{i,j,s,t} & \coloneqq \left(\begin{array}{c}
L_{(i-1)i}^{j,s} \\
\vdots \\
L_{(i-1)i}^{j,s+t}
\end{array}\right) .
\nonumber
\end{align}
\item
Composition of $2\leq t \leq r-1$ consecutive 1-morphisms between the objects $M_s$ and $M_{s+t}$ for $s\geq 0$ with $s+t\leq r$ along with insertion of 1-morphisms between these objects indexed by partitions for each $1\leq j \leq a$ 
\begin{equation}
(n^j_{s+1},\ldots,n^j_{s+t}) \ = \ \bm^{j,1} + \ldots + \bm^{j,b^j}  
\qquad \text{into} \quad   \bm^{j,1},\ldots,\bm^{j,b^j}  \in \bZ_{\geq0}^t
\quad \text{for some}\   b^j\geq 1
\end{equation}
gives rise to a \emph{type-2 boundary decomposition map}
\begin{equation}
\label{eq:type2_composition}
\varphi^{(2)}_{s,t,\underline\bm}
\: \colon \quad
\cX(\cL'\coloneqq\cL^{(2) \rm in}_{s,t,\underline\bm}) \:\leftindex_{\underline{\alpha}'} \times_{\underline{\beta}''}\: \cX(\cL''\coloneqq\cL^{(2) \rm out}_{s,t,\underline\bm}) \quad \longrightarrow \quad \cX(\cL)
\end{equation}
where we denote $\underline{\alpha}'\coloneqq \bigl(  \alpha^{s+1,j,k}_{\cL'} \bigr)_{1\leq j \leq a, 1\leq k\leq b^j}$, $\underline{\beta}''\coloneqq \bigl(\beta_{\cL''}^j \bigr)_{1\leq j \leq b^1+\ldots b^a}$, and  
\begin{equation}
\cL'\coloneqq\cL^{(2) \rm in}_{s,t,\underline\bm}  \coloneqq \left(\begin{array}{c}
{\cL'}^1 \\  \vdots \\ {\cL'}^{a}
\end{array}\right), \qquad
\cL''\coloneqq\cL^{(2) \rm out}_{s,t,\underline\bm}  \coloneqq 
\left(\begin{array}{c}
{\cL''}^1 \\  \vdots \\  {\cL''}^{b^1+\ldots b^a} 
\end{array}\right)
\end{equation}
with
\begin{align}
& {\cL'}^j  \coloneqq
\left(\begin{array}{ccccc}
\cdots & L_{(s-1)s}^{j,0} & L_{s(s+1)}^{j,0}\circ\cdots\circ L_{(s+t-1)(s+t)}^{j,0} & L_{(s+t)(s+t+1)}^{j,0} & \cdots \\
 & \vdots & L_{s(s+1)}^{j,m^{j,1}_1} \circ\cdots\circ L_{(s+t-1)(s+t)}^{j,m^{j,1}_t}  & \vdots &  \\
  & \vdots & L_{s(s+1)}^{j,m^{j,1}_1+m^{j,2}_1} \circ\cdots\circ L_{(s+t-1)(s+t)}^{j,m^{j,1}_t+m^{j,2}_t}  & \vdots &  \\
 & \vdots & \vdots & \vdots &  \\
\cdots & L_{(s-1)s}^{j,n^j_s} & L_{s(s+1)}^{j,m^{j,1}_1 +\ldots m^{j,b^j}_1= n^j_{s+1}}\circ\cdots\circ L_{(s+t-1)(s+t)}^{j,m^{j,1}_t +\ldots m^{j,b^j}_t = n^j_{s+t}} & L_{(s+t)(s+t+1)}^{j,n^j_{s+t+1}} & \cdots
\end{array}\right), 
\nonumber \\ 
& {\cL''}^{b^1+\ldots b^{j-1}+j'}  \coloneqq 
\left(\begin{array}{ccc}
L_{s(s+1)}^{j,m_1^{j,1}+\cdots+m_1^{j,j'-1}} & \cdots & L_{(s+t-1)(s+t)}^{j,m_t^{j,1}+\cdots+m_t^{j,j'-1}} \\
\vdots &  & \vdots \\
L_{s(s+1)}^{j,m_1^{j,1}+\cdots+m_1^{j,j'}} & \cdots & L_{(s+t-1)(s+t)}^{j,m_t^{j,1}+\cdots+m_t^{j,j'}}
\end{array}\right) 
\qquad\text{for}\; 1\leq j' \leq b^j.
\nonumber
\end{align}

\item
Composition of all $r\geq 2$ 1-morphisms in the $j$-th block along with insertion of 1-morphisms indexed by a partition
$\bn^j = \bm^1+\cdots+\bm^b$ into $\bm^1,\ldots,\bm^b \in \bZ_{\geq0}^r$ for some $b\geq 1$
gives rise to a \emph{type-3 boundary decomposition map}
\begin{equation}
\label{eq:type3_composition}
\varphi^{(3)}_{j,\underline{\bm}} 
\: \colon \quad
\cX(\cL'\coloneqq\cL^{(3) \rm in}_{j,\underline\bm}) \:\leftindex_{\underline\alpha'} \times_{\underline\beta''}\: \cX(\cL''\coloneqq\cL^{(3) \rm out}_{j,\underline\bm}) \quad \longrightarrow \quad \cX(\cL)
\end{equation}
where we denote $\underline{\alpha}'\coloneqq \bigl(  \alpha^{1,1,k}_{\cL'} \bigr)_{1\leq k \leq b}$, $\underline{\beta}''\coloneqq \bigl(\beta_{\cL''}^{j+j''} \bigr)_{0\leq j'' \leq b-1}$, and  
\begin{equation}
\cL'\coloneqq\cL^{(3) \rm in}_{j,\underline\bm}  \coloneqq
\left(\begin{array}{ccccc}
L_{01}^{j,0}\circ\cdots\circ L_{(r-1)r}^{j,0}  \\
L_{01}^{j,m^{1}_1} \circ\cdots\circ L_{(r-1)r}^{j,m^{1}_r}   \\
L_{01}^{j,m^{1}_1+m^{2}_1} \circ\cdots\circ L_{(r-1)r}^{j,m^{1}_t+m^{2}_r}   \\
\vdots \\
 L_{01}^{j,m^{1}_1 +\ldots m^{b}_1= n^j_1}\circ\cdots\circ L_{(r-1)r}^{j,m^{1}_r +\ldots m^{b}_t = n^j_r} 
\end{array}\right)
\nonumber \\ 
, \qquad
\cL''\coloneqq\cL^{(3) \rm out}_{j,\underline\bm}  \coloneqq 
\left(\begin{array}{c}
{\cL''}^1 \coloneqq \cL^1 \\  \vdots \\ {\cL''}^{j-1}\coloneqq\cL^{j-1} \\
{\cL''}^j \\  \vdots \\  {\cL''}^{j+b-1} 
 \\ {\cL''}^{j+b}\coloneqq\cL^{j+1}  \\  \vdots \\  {\cL''}^{a+b-1}\coloneqq\cL^{a}
\end{array}\right)
\end{equation}
with
\begin{align}
& {\cL''}^{j+j''}  \coloneqq 
\left(\begin{array}{ccc}
L_{01}^{j,m_1^{1}+\cdots+m_1^{j''}} & \cdots & L_{(r-1)r}^{j,m_r^{1}+\cdots+m_r^{j''}} \\
\vdots &  & \vdots \\
L_{01}^{j,m_1^{1}+\cdots+m_1^{j''+1}} & \cdots & L_{(r-1)r}^{j,m_r^{1}+\cdots+m_r^{j''+1}}
\end{array}\right) 
\qquad\text{for}\; 0\leq j'' \leq b-1 .
\nonumber
\end{align}
\end{enumerate}
\end{itemize}

We require that these boundary decomposition maps satisfy the following properties.
\begin{enumerate}
\item[\bf(i)]
Each boundary decomposition map $\varphi^{(\tau)}_{\cdots}$ for types $\tau=1,2,3$ covers the corresponding operadic composition map $\wt\Gamma$ -- specified in Remark~\ref{rmk:combinatorics_of_regularized_A-infty_2_categories} -- 
on fiber products of 2-associahedra in the sense that the following square commutes: 
\begin{gather} \label{eq:2 cover}
\xymatrix{
\cX(\cL') \times \cX(\cL'')
\ar[rr]^(0.55){\varphi^{(\tau)}_{\cdots}} \ar[d]_{p_{\cL'}
\times p_{\cL''}} 
&&
\cX(\cL) \ar[d]^{p_\cL} \\
\widetilde W_{\underline\bn'} \times \widetilde W_{\underline\bn''} \ar[rr]_{\wt\Gamma} && \widetilde W_{\underline\bn}
}
\end{gather}

\smallskip

\item[\bf(ii)]
Each boundary decomposition map is compatible with the evaluation maps as follows.
Using the notation of \eqref{eq:type1_composition}, the type-1 map applied to 
$$
(\chi',\chi'')\in \cX(\cL') \leftindex_{\alpha^{i,j,s+1}_{\cL'}}\times_{\beta^1_{\cL''}}\cX(\cL'')
$$ 
yields $\chi \coloneqq \varphi^{(1)}_{i,j,s,t}(\chi',\chi'') \in \cX(\cL)$ with  $\underline\beta_{\cL}(\chi)=\underline\beta_{\cL'}(\chi')$ and
\begin{equation}
\label{eq:type-1-compatibility-with-alpha-beta}
\alpha^{i',j',k}_{\cL}(\chi) \;=\; 
\begin{cases}
 \alpha^{i',j',k}_{\cL'}(\chi') & \quad\text{for} \;   (i',j') \neq (i,j)   \; \text{or} \;  (i',j') = (i,j),  k \leq s    \\
\alpha^{1,j',k-s}_{\cL''}(\chi'') & \quad \text{for} \; (i',j') = (i,j),  s+1 \leq k \leq s+t \\
 \alpha^{i,j,k-t +1}_{\cL'}(\chi') & \quad\text{for} \;  (i',j') = (i,j), k \geq s+t + 1  .
\end{cases}
\end{equation}
Similarly, using the notation of \eqref{eq:type2_composition}, the type-2 map applied to 
$$
\bigl(\chi',\chi''\bigr)\in \cX(\cL') \leftindex_{(\alpha^{s+1,j,k}_{\cL'})_{1\leq j\leq a, 1\leq k \leq b^j}}\times_{(\beta^j_{\cL''})_{1\leq j\leq b^1+\ldots b^a}} \cX(\cL'')
$$ 
yields 
$\chi \coloneqq \varphi^{(2)}_{s,t,\underline\bm}\bigl(\chi',\chi''\bigr) \in \cX(\cL)$ with   
$\underline\beta_{\cL}(\chi)= \underline\beta_{\cL'}(\chi')$ and
\begin{equation}
\label{eq:type-2-compatibility-with-alpha-beta}
\alpha^{i,j,k}_{\cL}(\chi) \;=\; 
\begin{cases}
 \alpha^{i,j,k}_{\cL'}(\chi') & \quad\text{for} \;   i \leq s    \\
\alpha^{i-s,b^1+\ldots b^{j-1}+j',k'}_{\cL''}(\chi'') & \quad \text{for} \;  
s+1 \leq i \leq s+t, 1\leq j' \leq b^j, 1\leq k' \leq m^{j,j'}_{i-s}  \\
& \qquad\qquad\qquad\quad \text{such that}\;  k = \sum_{1\leq \ell \leq j'-1} m^{j,\ell}_{i-s} + k'   \\
  \alpha^{i-t,j,k}_{\cL'}(\chi') & \quad\text{for} \;   i \geq s+t+1. \\
 \end{cases}
\end{equation}
And, using the notation of \eqref{eq:type3_composition}, the type-3 map applied to 
$$
\bigl(\chi',\chi''\bigr)\in \cX(\cL') \leftindex_{(\alpha^{1,1,k}_{\cL'})_{1\leq k \leq b}}\times_{(\beta^{j''}_{\cL''})_{j\leq j''\leq j+b-1}} \cX(\cL'')
$$ 
yields 
$\chi \coloneqq \varphi^{(3)}_{j,\underline\bm}\bigl(\chi',\chi''\bigr) \in \cX(\cL)$ with   
$
\underline\beta_{\cL}(\chi)= \bigl( \ldots,\beta^{j-1}_{\cL''}(\chi''), \beta^1_{\cL'}(\chi'), \beta^{j+b}_{\cL''}(\chi''), \ldots   \bigr)
$ 
and
\begin{equation}
\label{eq:type-3-compatibility-with-alpha-beta}
\alpha^{i,j',k}_{\cL}(\chi) \;=\; 
\begin{cases}
 \alpha^{i,j',k}_{\cL''}(\chi'') & \quad\text{for} \;   j' \neq j    \\
\alpha^{i,j+j'',k'}_{\cL''}(\chi'') & \quad \text{for} \;  j'=j,  0\leq j'' \leq b-1, 1\leq k' \leq m^{j''+1}_{i}  \\
& \qquad\qquad\qquad\quad \text{such that}\;  k = \sum_{1\leq \ell \leq j''} m^{\ell}_i + k'   .
 \end{cases}
\end{equation}
\smallskip

\item[\bf(iii)]
For any collection of 1-morphisms $\cL$ as in \eqref{eq:curly-L}
the collection of boundary decomposition maps $\varphi^{(\tau)}_{\cdots}$ for $\tau=1,2,3$
is a system of boundary faces for $\cX(\cL)$ in the sense of Definition~\ref{def:mfds_with_faces}.

\smallskip

\item[\bf(iv)]
The boundary decomposition maps are associative in the sense of Remark~\ref{rmk:combinatorics_of_regularized_A-infty_2_categories}.
\null\hfill$\triangle$
\end{enumerate}
\end{definition}

\begin{remark}[Combinatorial underpinnings of regularized $(A_\infty,2)$-flow categories]
\label{rmk:combinatorics_of_regularized_A-infty_2_categories}
In this remark, we make some parts of Definition~\ref{def:general A_infty-2-flow_category} explicit -- specifically, those parts with a significant combinatorial component.
We also give some intuition for how to think about the associativity condition in Definition~\ref{def:general A_infty-2-flow_category} in terms of paths in 2-associahedra.

\medskip

\noindent
{\bf The maps $\wt\Gamma$ in Definition~\ref{def:general A_infty-2-flow_category}(i):}
In each diagram \eqref{eq:2 cover},
$\widetilde W_{\underline\bn} \coloneqq \prod^{K_r}_{1\leq j \leq a}  W_{\bn^j}$ denotes the iterated fiber product of 2-associahedra as in \eqref{eq:type1_composition}.
The map $\wt\Gamma$ is the identification of $\widetilde W_{\underline\bn'} \times \widetilde W_{\underline\bn''}$ with a boundary face of $\widetilde W_{\underline\bn}$, defined in terms of the 2-operadic maps $\Gamma_{2T}$ from \cite[Theorem 4.1]{bottman_2-associahedra}.\footnote{\cite[Thm.\ 4.1]{bottman_2-associahedra} concerns the poset instantiation of 2-associahedra, whereas in this paper we only use the topological instantiation of 2-associahedra, as in \cite{bottman_realizations}.
The recursive identifications of faces of 2-associahedra in \cite[Thm.\ 4.1]{bottman_2-associahedra} also hold for the topological instantiation.
It naturally extends to fiber products.
}

For type-$(\tau=1)$ decomposition, the integer multi-tuples are $\ul\bn' \coloneqq (\ldots,\bn^{j-1},{\bn'}^j,\bn^{j+1},\ldots)$ with ${\bn'}^j:=(\ldots,n^j_{i-1},n^j_i-t+1,n^j_{i+1},\ldots)$, $\bn'' \coloneqq (t)$.
We define the map $\wt\Gamma$ like so:
\begin{itemize}
\item
On all the factors of $\wt W_{\ul\bn'}$ besides the $j$-th, $\wt\Gamma$ is the identity.

\item
On $W_{{\bn'}^j} \times W_{(t)}$ (i.e.\ the remaining factor of $\wt W_{\ul\bn'}$, times $\wt W_{\ul\bn''}$), we define $\wt\Gamma$ to be the map $\Gamma_{2T}$ (as defined in \cite[Theorem 4.1]{bottman_2-associahedra}) for the following tree-pair $2T$:
\begin{figure}[H]
\tiny
\centering
\def\svgwidth{0.4\columnwidth}
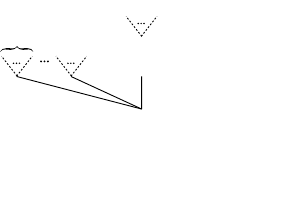
\end{figure}
\end{itemize}
Since both the identity maps and $\Gamma_{2T}$ are the identity on the underlying associahedron $K_r$, the map $\wt\Gamma$ respects the fiber product condition in the target $\wt W_{\ul\bn}$.

For type-$(\tau=2)$ decomposition, the integer multi-tuples are  
$\underline\bn' \coloneqq ({\bn'}^1,\ldots,{\bn'}^a)$ 
with ${\bn'}^j:=(\ldots,n^j_s,b^j,n^j_{s+t+1},\ldots)$, 
$\underline\bn'' \coloneqq ({\bn''}^1,\ldots,{\bn''}^{b^1+\ldots b^a})$ 
with ${\bn''}^{b^1+\ldots b^{j-1}+j'}  \coloneqq (m^{j,j'}_1-m^{j,j'-1}_1 ,\ldots, m^{j,j'}_t-m^{j,j'-1}_t)$.
We define the map $\wt\Gamma$ in the following way:
\begin{itemize}
\item
For any $j$ with $1 \leq j \leq a$, we define $\wt\Gamma$ on the factor $W_{{\bn'}^j}$ of $\wt W_{\ul\bn'}$ and on the factor $\prod_{1\leq j'\leq b^j}^{K_r} W_{{\bn''}^{b^1+\cdots+b^{j-1}+j'}}$ of $\wt W_{\ul\bn''}$ to be $\Gamma_{2T^j}$ for the following tree-pair $2T^j$:
\end{itemize}
\begin{figure}[H]
\tiny
\centering
\def\svgwidth{0.45\columnwidth}
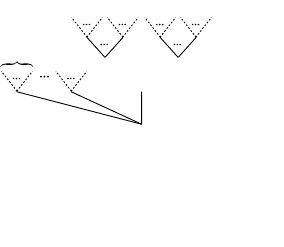
\end{figure}
Since the underlying tree in $2T^j$ is independent of $j$, the map $\wt\Gamma$ respects the fiber product condition in the target.

For type-$(\tau=3)$ decomposition, the integer multi-tuples are 
${\bn'}:=(b)$, 
$\underline\bn'' \coloneqq ({\bn''}^1,\ldots,{\bn''}^{a+b-1})$ 
with ${\bn''}^{j'<j}  \coloneqq \bn^{j'}$, 
${\bn''}^{j+j'<j+b}  \coloneqq (m^{j,j'+1}_1-m^{j,j'}_1 ,\ldots, m^{j,j'+1}_t-m^{j,j'}_r)$,
${\bn''}^{j'\geq j+b}  \coloneqq \bn^{j'-b+1}$.
We define the map $\wt\Gamma$ like so:
\begin{itemize}
\item
On the first $j-1$ and last $a-j$ factors of $\wt W_{\ul\bn''}$, $\wt\Gamma$ is the identity.

\item
On $W_{(b)} \times \prod_{0 \leq j'' \leq b-1}^{K_r} W_{{\bn''}^{j+j''}}$, we define $\wt\Gamma$ as the map $\Gamma_{2T}$ for the following tree-pair $2T$:
\begin{figure}[H]
\tiny
\centering
\def\svgwidth{0.25\columnwidth}
\begingroup%
  \makeatletter%
  \providecommand\color[2][]{%
    \errmessage{(Inkscape) Color is used for the text in Inkscape, but the package 'color.sty' is not loaded}%
    \renewcommand\color[2][]{}%
  }%
  \providecommand\transparent[1]{%
    \errmessage{(Inkscape) Transparency is used (non-zero) for the text in Inkscape, but the package 'transparent.sty' is not loaded}%
    \renewcommand\transparent[1]{}%
  }%
  \providecommand\rotatebox[2]{#2}%
  \newcommand*\fsize{\dimexpr\f@size pt\relax}%
  \newcommand*\lineheight[1]{\fontsize{\fsize}{#1\fsize}\selectfont}%
  \ifx\svgwidth\undefined%
    \setlength{\unitlength}{69.19571691bp}%
    \ifx\svgscale\undefined%
      \relax%
    \else%
      \setlength{\unitlength}{\unitlength * \real{\svgscale}}%
    \fi%
  \else%
    \setlength{\unitlength}{\svgwidth}%
  \fi%
  \global\let\svgwidth\undefined%
  \global\let\svgscale\undefined%
  \makeatother%
  \begin{picture}(1,1.54758046)%
    \lineheight{1}%
    \setlength\tabcolsep{0pt}%
    \put(0,0){\includegraphics[width=\unitlength,page=1]{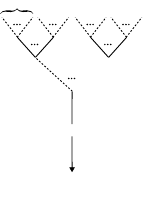}}%
    \put(0.08402015,1.53202081){\color[rgb]{0,0,0}\makebox(0,0)[lt]{\lineheight{0}\smash{\begin{tabular}[t]{l}$m_1^1$\end{tabular}}}}%
    \put(0,0){\includegraphics[width=\unitlength,page=2]{type-3_comp_on_Wn.pdf}}%
    \put(0.33989653,1.53202081){\color[rgb]{0,0,0}\makebox(0,0)[lt]{\lineheight{0}\smash{\begin{tabular}[t]{l}$m_r^1$\end{tabular}}}}%
    \put(0,0){\includegraphics[width=\unitlength,page=3]{type-3_comp_on_Wn.pdf}}%
    \put(0.59788382,1.53202081){\color[rgb]{0,0,0}\makebox(0,0)[lt]{\lineheight{0}\smash{\begin{tabular}[t]{l}$m_1^b$\end{tabular}}}}%
    \put(0,0){\includegraphics[width=\unitlength,page=4]{type-3_comp_on_Wn.pdf}}%
    \put(0.85376019,1.53202081){\color[rgb]{0,0,0}\makebox(0,0)[lt]{\lineheight{0}\smash{\begin{tabular}[t]{l}$m_r^b$\end{tabular}}}}%
    \put(0,0){\includegraphics[width=\unitlength,page=5]{type-3_comp_on_Wn.pdf}}%
    \put(0.4782749,0.24767389){\color[rgb]{0,0,0}\makebox(0,0)[lt]{\lineheight{0}\smash{\begin{tabular}[t]{l}$r$\end{tabular}}}}%
  \end{picture}%
\endgroup%

\end{figure}
\end{itemize}
Since both the identity maps and $\Gamma_{2T}$ are the identity on the underlying associahedron $K_r$, the map $\wt\Gamma$ respects the fiber product condition in the target.

\medskip

\noindent
{\bf The associativity condition in Definition~\ref{def:general A_infty-2-flow_category}(iv):}
Making this associativity condition explicit involves significant input from the theory of 2-associahedra.
We state associativity in terms of new objects called \emph{coppices of tree-pairs}, which label the strata in fiber products of 2-associahedra.
For a more detailed account of the combinatorics of fiber products of 2-associahedra, see \cite{b:homotopies}.
We note that when $a=1$, this associativity condition is a lift of the associativity condition for the relative 2-operad of 2-associahedra, as in \cite[Definition 2.3]{bottman_carmeli}.

Throughout this part of the remark, we will relax the stability condition on tree-pairs, in order to be able to incorporate unstable operations.
Specifically, for any tree-pair $2T = T_b \to T_s$ in this description of the associativity condition, the bubble tree $T_b$ is not required to satisfy \cite[Def.~3.1(stability)]{bottman_2-associahedra}.
The seam tree $T_s$ must still satisfy the stability condition in \cite[Def.~2.2]{bottman_2-associahedra}.
We denote the enlarged 2-associahedra consisting of these tree-pairs by $W_\bn^{\text{us}}$ (the ``$\text{us}$'' stands for ``unstable''); we denote fiber products thereof by $\wt W_{\ul\bn}^{\text{us}}$.

\medskip

\noindent
{\bf (Coppices of tree-pairs and terminology for relative location of vertices in trees)}
For any $r \geq 1$ and $\underline{\mathbf n} = (\mathbf n^1,\ldots,\mathbf n^a) \subset \mathbb{Z}_{\geq0}^r$, a \emph{coppice\footnote{In arboricultural parlance, coppicing a tree results in a collection of trees that share a single root system.} of tree-pairs of shape $\ul\bn$} is an element
\begin{align}
\underline{2T} = (2T^1,\ldots,2T^a) \in \widetilde W_{\underline{\mathbf n}}^{\text{us}} \coloneqq W_{\mathbf n^1}^{\text{us}} \times_{K_r} \cdots \times_{K_r} W_{\mathbf n^a}^{\text{us}}.
\end{align}
The fiber-product condition on $2T^1, \ldots, 2T^a$ exactly says that the seam trees $T_s^1, \ldots, T_s^a$ coincide.
We may therefore denote them all by the single tree $T_s$.

Suppose that $T$ is a rooted ribbon tree.
For distinct $v_1, v_2 \in V(T)$, we say that \emph{$v_1$ is above $v_2$} if the path from $v_1$ to the root passes through $v_2$.
If neither $v_1$ nor $v_2$ is above the other, we say \emph{$v_1$ is to the left of $v_2$} if $v_1$ appears before $v_2$ in the preorder traversal.
(If we depict $T$ with the root at the bottom, as in \cite{bottman_2-associahedra}, $v_1$ literally appears to the left of $v_2$.)

\medskip

\noindent
{\bf (Labelings)}
Suppose that $\underline{2T} \in \widetilde W_{\underline{\mathbf n}}^{\text{us}}$ is a coppice of tree-pairs.
A \emph{labeling} of $\underline{2T}$ is the choice of a sequence $\cL(v) = \bigl(\cL(v)_0,\ldots,\cL(v)_{\#\text{in}(v)}\bigr)$ for every $v \in V_{\text{seam}}(T_b^j)$.
We require these data to satisfy the following conditions:
\begin{itemize}
\item
Fix any $1\leq j \leq a$.
Recall from \cite{bottman_2-associahedra} that $f^j$ is the map $T_b^j \to T_s$, and that $T_{s,f^j(v)}$ is the subtree of $T_s$ with root $f^j(v)$ and with non-root vertices all elements of $V(T_s)$ that are above $v$.
Suppose that the leaves of $T_s$ in $T_{s,f^j(v)}$ are $\lambda_{s+1}^{T_s}, \ldots, \lambda_{s+t}^{T_s}$.
Then each element of $\cL(v)$ is required to be a 1-morphism from $M_s$ to $M_{s+t}$.

\item
Following \cite[Definition-Lemma 3.19]{bottman_2-associahedra}, we equip $V_{\text{seam}}(T_b^j)$ with a partial order in the following way:
\begin{itemize}
\item
Suppose $v_1, v_2 \in V_{\text{seam}}(T_b^j)$ have $f^j(v_1) = f^j(v_2)$.
Suppose that $v_1$ is to the left of $v_2$.
Then we declare $v_1 < v_2$.
\end{itemize}
Suppose $v_1, v_2 \in V_{\text{seam}}(T_b^j)$ satisfy $v_1 \lessdot v_2$, with respect to the partial order just defined.
Then we require $\cL(v_1)_{\#\text{in}(v_1)} = \cL(v_2)_0$.

\item
Fix $v \in V_{\text{seam}}(T_b^j)$ and $v' \in \text{in}(v)$.
Suppose $v'$ is the $m$-th element of $\text{in}(v)$.
Suppose $v' \in V_{\text{comp}(T_b^j)}$, and denote $\text{in}(v') \eqqcolon \{w_1,\ldots,w_\ell\}$.
Then we require:
\begin{align}
&\cL(v)_{m-1}
=
\cL(w_1)_0\circ\cdots\circ\cL(w_\ell)_0,
\qquad
\cL(v)_m
=
\cL(w_1)_{\#\text{in}(w_1)}\circ\cdots\circ\cL(w_\ell)_{\#\text{in}(w_\ell)}.
\end{align}
\end{itemize}
For any $\cL = \Bigl(L_{(i-1)i}^{j,k} \in  \leftindex^1\Mor(M_{i-1},M_i) \Bigr)_{(i,j,k)\in I^{\, r,a}_{\underline\bn} }$ and coppice $\ul{2T} \in \wt W_{\ul\bn}^{\text{us}}$, $\cL$ induces a labeling of $\ul{2T}$ in a canonical way, which we denote $\cL_{\ul{2T}}$ (see \cite{b:homotopies}).\footnote{
Here $\cL$ can be viewed as a labeling of the coppice corresponding to the interior of $\wt W_{\ul\bn}$.
}

\medskip

\noindent
{\bf (Decomposition maps, and the associativity requirement)}
In Definition~\ref{def:general A_infty-2-flow_category} we defined boundary decomposition maps $\varphi^{(\tau=1,2,3)}_{\cdots}$.
By composing these maps, we can produce finer decomposition maps, which we think of as being associated to coppices of tree-pairs.
The associativity requirement (the topic of condition (iv)) is the assertion that for a given coppice, the order in which we iteratively apply the boundary decomposition maps does not affect the final map.

More precisely, fix $\cL = \Bigl(L_{(i-1)i}^{j,k} \in  \leftindex^1\Mor(M_{i-1},M_i) \Bigr)_{(i,j,k)\in I^{\, r,a}_{\underline\bn} }$ and a coppice $\ul{2T} \in \wt W_{\ul\bn}^{\text{us}}$.
We would like to define a \emph{decomposition map}
\begin{align}
\label{eq:decomp_map}
\varphi^{\underline{2T}}_{\cL}
\colon
\prod_{1\leq j \leq a}\prod_{\alpha \in V_{\text{comp}}^1(T_b^j)} \mathcal X(\cL_\alpha) \times \prod_{\rho \in V_{\text{int}}(T_s)}\mathcal X(\cL_\rho)
\: \longrightarrow \: 
\mathcal X(\cL),
\end{align}
where each $\cL_\alpha$ and $\cL_\rho$ are defined as follows:
\begin{itemize}
\item
To define $\cL_\alpha$ denote 
by $\beta \in V_{\text{seam}}(T_b^j)$ the incoming vertex of $\alpha$.
Then $\cL_\alpha \coloneqq \bigl(\cL_{\underline{2T}}(\beta)\bigr)$ is a collection of 1-morphisms of shape $\bigl(\bigl(\#\text{in}(\beta)+1\bigr)\bigr)$.

\item
To define $\cL_\rho$ denote for each $1\leq j \leq a$ 
the elements of $V_{\text{comp}}^{\geq2}(T_b^j) \cap (f^j)^{-1}\{\rho\}$ by $\bigl(\alpha_m^j\bigr)_{1 \leq m \leq \ell_j}$, where the ordering in $m$ corresponds to the left-to-right partial-ordering on $V_{\text{comp}}(T_b^j)$.
For any $j, m$, denote the elements of $\text{in}(\alpha_m^j)$ by $\bigl(\beta^j_{m,o}\bigr)_{1\leq 0 \leq 1,\#\text{in}(\rho)}$.
Then $\cL_\rho$ is a collection of 1-morphisms of shape
\begin{align}
&\bigl(\bigl(\#\text{in}(\beta^1_{1,1}),\ldots,\#\text{in}(\beta^1_{1,\#\text{in}(\rho)})\bigr),\ldots,\bigl(\#\text{in}(\beta^1_{\ell_1,1}),\ldots,\#\text{in}(\beta^1_{\ell_1,\#\text{in}(\rho)})\bigr),\ldots
\\
&\hspace{1in}\ldots,\bigl(\#\text{in}(\beta^a_{1,1}),\ldots,\#\text{in}(\beta^a_{1,\#\text{in}(\rho)})\bigr),\ldots,\bigl(\#\text{in}(\beta^a_{\ell_a,1}),\ldots,\#\text{in}(\beta^a_{\ell_a,\#\text{in}(\rho)})\bigr)\bigr),
\nonumber
\end{align}
defined like so:
\begin{align}
\cL_\rho
\coloneqq
\left(\begin{array}{ccc}
\cL(\beta^1_{1,1})_0 & & \cL(\beta^1_{1,\#\text{in}(\rho)})_0 \\
\vdots & \cdots & \vdots \\
\cL(\beta^1_{1,1})_{\#\text{in}(\beta^1_{1,1})} & & \cL(\beta^1_{1,\#\text{in}(\rho)})_{\#\text{in}(\beta^1_{1,\#\text{in}(\rho)})} \\
& \vdots & \\
\cL(\beta^1_{\ell_1,1})_0 & & \cL(\beta^1_{\ell_1,\#\text{in}(\rho)})_0 \\
\vdots & \cdots & \vdots \\
\cL(\beta^1_{\ell_1,1})_{\#\text{in}(\beta^1_{\ell_1,1})} & & \cL(\beta^1_{\ell_1,\#\text{in}(\rho)})_{\#\text{in}(\beta^1_{\ell_1,\#\text{in}(\rho)})} \\
& \vdots & \\ \cL(\beta^a_{1,1})_0 & & \cL(\beta^a_{1,\#\text{in}(\rho)})_0 \\
\vdots & \cdots & \vdots \\
\cL(\beta^a_{1,1})_{\#\text{in}(\beta^a_{1,1})} & & \cL(\beta^a_{1,\#\text{in}(\rho)})_{\#\text{in}(\beta^a_{1,\#\text{in}(\rho)})} \\
& \vdots & \\ \cL(\beta^a_{\ell_a,1})_0 & & \cL(\beta^a_{\ell_a,\#\text{in}(\rho)})_0 \\
\vdots & \cdots & \vdots \\
\cL(\beta^a_{\ell_a,1})_{\#\text{in}(\beta^a_{\ell_a,1})} & & \cL(\beta^a_{\ell_a,\#\text{in}(\rho)})_{\#\text{in}(\beta^a_{\ell_a,\#\text{in}(\rho)})}
\end{array}\right).
\end{align}
\end{itemize}
The map $\varphi^{\ul{2T}}_\cL$ in \eqref{eq:decomp_map} is now defined by iteratively applying boundary decomposition maps $\varphi^{(\tau=1,2,3)}_{\cdots}$.\footnote{
For instance, for $\rho, \sigma \in V_{\text{int}(T_s)}$ with $\sigma \in \text{in}(\rho)$, we can apply an instance of $\varphi^{(2)}_{\cdots}$ to map the factors $\cX(\cL_\rho) \times \cX(\cL_\sigma)$ to a single $\cX(\cL')$.}
There may be several ways to compose a sequence of $\varphi^{(\tau)}_{\cdots}$'s to obtain a map with the domain and target of $\varphi^{\ul{2T}}_\cL$, and the associativity condition asserts that no matter what boundary decomposition maps we compose in what order, we end up with the same decomposition map $\varphi^{\ul{2T}}_\cL$.
That is, each instance of $\varphi^{\ul{2T}}_\cL$ is well-defined.

\medskip

\noindent
{\bf (Intuition for the associativity condition in terms of paths in 2-associahedra)}
In this final part of the remark, which is solely motivational, we consider only those decomposition maps associated to stable coppices $\ul{2T} \in \wt W_{\ul\bn}$.

Following \cite{bottman_2-associahedra}, one can move from a stratum $F$ of a 2-associahedron to adjacent strata $G$ with $\dim G = \dim F - 1$ via type-1, -2, and -3 moves\footnote{In \cite{bottman_2-associahedra}, the first author dealt with the poset incarnation of 2-associahedra.
The moves in \cite{bottman_2-associahedra} apply just as well to the incarnation of 2-associahedra as topological spaces as in \cite{bottman_realizations}.}.
A generalization of this allows one to define type-1, -2, and -3 moves for fiber products $\wt W_{\ul\bn}$ of 2-associahedra.
For any stratum $F$ of $\wt W_{\ul\bn}$, this allows us to enumerate the strata $G$ with $G \subset \text{cl}(F)$ and $\codim G = \codim F + 1$.
Applying this to the top stratum (corresponding to the interior of $\wt W_{\ul\bn}$), we can enumerate the boundary (i.e.\ codimension-1) strata of $\wt W_{\ul\bn}$; we refer to them as being of type 1, 2, or 3, depending on the type of the move leading us there from the interior.
Just as any strata of $\wt W_{\ul\bn}$, the type-1, -2, and -3 boundary strata of $\wt W_{\ul\bn}$ correspond to coppices $\ul{2T}$.
The corresponding decomposition maps $\varphi^{\ul{2T}}_\cL$ as described above are exactly the boundary decomposition maps $\varphi^{(\tau)}_{\cdots}$, as in Definition~\ref{def:general A_infty-2-flow_category}.

More generally, we defined the decomposition maps $\varphi^{\ul{2T}}_\cL$ by composing boundary decomposition maps $\varphi^{(\tau=1,2,3)}_{\cdots}$.
Each coppice $\ul{2T}$ corresponds to a face $F$ of $\wt W_{\ul\bn}$.
Now composing boundary decomposition maps to define $\varphi^{\ul{2T}}_\cL$ corresponds to choosing a sequence of faces
\begin{align}
\text{int}\:\wt W_{\ul\bn} = F_0 \supset F_1 \cdots \supset F_k = F
\end{align}
such that $F_i \subset \text{cl}(F_{i-1})$ and $\codim F_i = i$.
The different ways to produce $\varphi^{\ul{2T}}_\cL$ as a composition of boundary decomposition maps (which must coincide by the associativity condition in Definition~\ref{def:general A_infty-2-flow_category}(iv)) correspond to the different possible sequences of this form.
\null\hfill$\triangle$
\end{remark}

\begin{remark}\label{rmk:2-a-b-maps}
To simplify notation later on, we summarize the evaluation maps in \eqref{eq:2-a-b-maps} by
\begin{align}
\xymatrix{
&&&  \leftindex^2\Mor^{\rm in}_{2\sC}(\cL) \coloneqq \prod_{ (i,j,k)\in  {'\!I^{\, r,a}_{\underline\bn}}}   \leftindex^2\Mor(L_{(i-1)i}^{j,k-1},L_{(i-1)i}^{j,k})   \\
\cX(\cL) \ar@/^1.0pc/[urrr]^(0.2){\underline\alpha_\cL:=(\alpha^{i,j,k}_\cL)_{ (i,j,k)\in  {'\!I^{\, r,a}_{\underline\bn}}}}\ar[rrr]_(0.2){\underline\beta_\cL:=(\beta^j_\cL)_{1\leq j\leq a}} &&& 
\leftindex^2\Mor^{\rm out}_{2\sC}(\cL) \coloneqq \prod_{1\leq j\leq a} \leftindex^2\Mor(L_{01}^{j,0}\circ\cdots L_{(r-1)r}^{j,0},L_{01}^{j,n_1^j}\circ\cdots L_{(r-1)r}^{j,n_r^j}) .}
\end{align}
With that we can summarize the three types of boundary decomposition maps as
\begin{equation}  \label{eq:2-decomp map}
\varphi^{(\tau)}_{*_\tau}
\: \colon \quad
\cX(\cL'\coloneqq\cL^{(\tau) \rm in}_{*_\tau}) \:\leftindex_{\underline{\alpha}'} \times_{\underline\beta''} \: \cX(\cL''\coloneqq\cL^{(\tau) \rm out}_{*_\tau}) \quad \longrightarrow \quad \cX(\cL) .
\end{equation}
Here the indices $*_\tau$ are elements of the following subsets of integers: 
\begin{equation*} \label{eq:flow123}
\begin{array}{l}
*_1 \; \in \; \bigl\{ (i,j,s,t) \,\big|\, 
1\leq i \leq r, 1 \leq j \leq a, s,t\geq 0 \:
\; \text{so that}\; s+t \leq n_i^j  \bigr\} ;  \\
*_2 \; \in \; \bigl\{ \bigl( s,t,\underline m =( \underline m^{j,j'} )_{1\leq j \leq a, 1\leq j'\leq b^j} \bigr)  \,\big|\, 
s\geq 0, 2\leq t \leq r-1 
\; \text{so that}\;
\ s+t\leq r, \\
\qquad\qquad\qquad\qquad\qquad\qquad\qquad\qquad\quad
\forall 1\leq j \leq a : 
(n^j_{s+1},\ldots,n^j_{s+t})  
= \bm^{j,1} + \ldots + \bm^{j,b^j}   \bigr\} ;
\\
*_3 \; \in \; \bigl \{  \bigl( j, \underline m = ( \underline m^{j'} )_{1\leq j'\leq b} \,\big)
\,\big|\,  r\geq 2,  1 \leq j \leq a ,  \bm^1,\ldots,\bm^b \in \bZ_{\geq0}^r 
\; \text{so that}\;
\bm^1+\cdots+\bm^b  \bigr\}  .
\end{array}
\end{equation*}
For $\tau=1,2$ the fiber product is over 
$\leftindex^2\Mor^{\rm out}_{2\sC}(\cL''=\cL^{(\tau) \rm out}_{*_\tau})$, whereas for $\tau=3$ the fiber product is over $\leftindex^2\Mor^{\rm in}_{2\sC}(\cL'=\cL^{(\tau) \rm in}_{*_\tau})$, via the maps
\begin{equation*}
\begin{array}{ll}
(\tau=1):  \quad 
\underline\alpha' \coloneqq \alpha^{i,j,s+1}_{\cL'} 
&\quad\text{and}\quad 
\underline\beta'' \coloneqq \beta^1_{\cL''} = \underline \beta_{\cL''} ; \\
(\tau=2): \quad 
\underline{\alpha}'\coloneqq \bigl(  \alpha^{s+1,j,k}_{\cL'} \bigr)_{1\leq j \leq a, 1\leq k\leq b^j}
&\quad\text{and}\quad 
\underline{\beta}''\coloneqq \bigl(\beta_{\cL''}^j \bigr)_{1\leq j \leq b^1+\ldots b^a} = \underline \beta_{\cL''}; 
\\
(\tau=3): \quad 
\underline{\alpha}'\coloneqq \bigl(  \alpha^{1,1,k}_{\cL'} \bigr)_{1\leq k \leq b} = \underline\alpha_{\cL'} 
&\quad\text{and}\quad 
\underline{\beta}''\coloneqq \bigl(\beta_{\cL''}^{j+j''} \bigr)_{0\leq j'' \leq b-1}
.
\end{array}
\end{equation*}
The incoming resp.\ outgoing 2-morphism spaces associated to $\cL$ are then identified with the product of the incoming resp.\ outgoing 2-morphism spaces associated to $\cL'=\cL^{(\tau) \rm in}_{*_\tau}$ and $\cL''=\cL^{(\tau) \rm out}_{*_\tau}$ minus those factors that are utilized in the fiber product, that is 
\begin{equation*}
\begin{array}{ll}
(\tau=1,2): 
&\leftindex^2\Mor^{\rm in}_{2\sC}(\cL) = \quotient{\leftindex^2\Mor^{\rm in}_{2\sC}(\cL')}{\leftindex^2\Mor^{\rm out}_{2\sC}(\cL'')} \times \leftindex^2\Mor^{\rm in}_{2\sC}(\cL'') 
\: \:\text{and}\: \:
\leftindex^2\Mor^{\rm out}_{2\sC}(\cL) = \leftindex^2\Mor^{\rm out}_{2\sC}(\cL') ; 
 \\
(\tau=3): \;
&\leftindex^2\Mor^{\rm in}_{2\sC}(\cL) = \leftindex^2\Mor^{\rm in}_{2\sC}(\cL'')
\: \:\text{and}\: \: 
\leftindex^2\Mor^{\rm out}_{2\sC}(\cL) = \quotient{\leftindex^2\Mor^{\rm out}_{2\sC}(\cL'')}{\leftindex^2\Mor^{\rm in}_{2\sC}(\cL')} \times \leftindex^2\Mor^{\rm out}_{2\sC}(\cL')  .
\end{array}
\end{equation*}
Along with this, the compatibilities \eqref{eq:type-1-compatibility-with-alpha-beta}-- \eqref{eq:type-3-compatibility-with-alpha-beta} of the boundary decomposition maps with the evaluation maps in  Definition~\ref{def:A_infty-2-flow_category}~(ii) can be summarized for $(\chi',\chi'')\in \cX(\cL') \leftindex_{\underline\alpha'}\times_{\underline\beta''}\cX(\cL'')$ by
\begin{equation*}
\begin{array}{ll}
(\tau=1,2): 
&
\underline\alpha_\cL\bigl(\phi^{(\tau)}_{*_\tau}(\chi',\chi'')\bigr) =  \; \quotient{ \underline\alpha_{\cL'}(\chi') }{ \underline\beta_{\cL''}(\chi'') } \times \underline\alpha_{\cL''}(\chi'')   
\quad \text{and}\quad
\underline\beta_\cL\bigl(\phi^{(\tau)}_{*_\tau}(\chi',\chi'')\bigr) = \underline\beta_{\cL'}(\chi') ; 
 \\
(\tau=3): \;
&
\underline\alpha_\cL\bigl(\phi^{(\tau)}_{*_\tau}(\chi',\chi'')\bigr) = \underline\alpha_{\cL''}(\chi'') 
\quad \text{and}\quad
\underline\beta_\cL\bigl(\phi^{(\tau)}_{*_\tau}(\chi',\chi'')\bigr) =  \; \quotient{ \underline\beta_{\cL''}(\chi'') }{ \underline\alpha_{\cL'}(\chi') } \times \underline\beta_{\cL'}(\chi')  .
\end{array}
\end{equation*}
\null\hfill$\triangle$
\end{remark}

As a first step to making sense of this lengthy definition we show that, as claimed above, the morphisms between two fixed objects inherit the structure of an $A_\infty$-flow category.

\begin{lemma} \label{lem:mor is cat}
Let $2\sC$ be a regularized $(A_\infty,2)$-flow category as in Definition~\ref{def:general A_infty-2-flow_category}.
Then for any two objects $M_0, M_1 \in \Ob$ the structure of $2\sC$ restricts to a  
regularized $A_\infty$-flow category $\Mor(M_0,M_1)$ as in Definition~\ref{def:general_A_infty-flow_category}
as follows:  
\begin{itemize}
\item
Objects are given by the 1-morphisms $\Ob_{\Mor(M_0,M_1)} \coloneqq \leftindex^1\Mor_{2\sC}(M_0,M_1)$.

\smallskip

\item
For every $L,L' \in  \Ob_{\Mor(M_0,M_1)}$, the associated $(*)$base space is the 2-morphism space $\Mor_{\Mor(M_0,M_1)}(L,L') \coloneqq \leftindex^2\Mor_{2\sC}(L,L')$.

\smallskip

\item
For $L^0, \ldots, L^n \in \Ob_{\Mor(M_0,M_1)}$ the associated $(*)$moduli space is 
$\cX(L^0,\ldots,L^n) \coloneqq \cX(\cL)$ for the tuple $\cL=( L^{1,k}_{01} = L^k )_{0\leq k \leq n}$ with $r=1$, $a=1$, $n^1_1=n$.
The $(*)$maps are 
$\alpha^k_{L^0 \ldots L^n}  \coloneqq {\alpha^{1,1,k}_\cL}: \cX(\cL) \to \leftindex^2\Mor_{2\sC}(L^{k-1},L^k)$ for $1\leq k \leq n$, 
$\beta_{L^0 \ldots L^n}  \coloneqq   \beta^1_\cL: \cX(\cL) \to \leftindex^2\Mor_{2\sC}(L^0,L^k)$, 
and
$p_{L^0 \ldots L^n}  \coloneqq  p_\cL: \cX(\cL) \to W_{\bn=(n)} = K_n$.
\item
For $\cL=(L^0,\ldots,L^n) \subset \Ob_{\Mor(M_0,M_1)}$ and indices $s,t\geq 0$ with $s+t\leq n$ 
the boundary decomposition map is given by 
$\varphi^{s,t}_{L^0 \ldots L^n} \coloneqq  \varphi^{(1)}_{1,1,s,t}: \cX(\cL') \leftindex_{\alpha^{1,1,s+1}}\times_{\beta^1}\: \cX(\cL'') \to \cX(\cL)$
for $\cL'=(L^0,\ldots,L^s,L^{s+t},\ldots,L^n)$ and $\cL''= (L^s,\ldots,L^{s+t})$.
\end{itemize}
\end{lemma}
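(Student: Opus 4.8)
The plan is to verify, one property at a time, that the data listed in the statement satisfy (i)--(iv) of Definition~\ref{def:general_A_infty-flow_category}, by specializing the corresponding properties of $2\sC$ to the case $r=a=1$. First I would record the simplifications that occur in this case. With $\bn=(n)$ one has $\wt W_{\ul\bn}=W_{(n)}=K_n$, so the target of $p_\cL$ is literally the associahedron $K_n$ and the $p$-map of the restricted structure is well-defined. The index sets collapse to $I^{\,1,1}_{(n)}=\{(1,1,k)\mid 0\le k\le n\}$ and $'\!I^{\,1,1}_{(n)}=\{(1,1,k)\mid 1\le k\le n\}$, so that $\alpha^{1,1,k}_\cL$ and $\beta^1_\cL$ are exactly the maps renamed $\alpha^k$ and $\beta$ in the statement. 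Crucially, among the three families of boundary decomposition maps only type-$1$ survives when $r=1$: a type-$2$ map requires $2\le t\le r-1$ and a type-$3$ map requires $r\ge2$, both impossible for $r=1$. Thus the only boundary decomposition maps of $2\sC$ attached to such $\cL$ are the $\varphi^{(1)}_{1,1,s,t}$, which are precisely the $\varphi^{s,t}_{L^0\ldots L^n}$ of the restricted structure, and their fiber products $\cX(\cL')\leftindex_{\alpha^{1,1,s+1}_{\cL'}}\times_{\beta^1_{\cL''}}\cX(\cL'')$ agree with those in \eqref{eq:A_infty-flow_recursive}.

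Properties (i) and (ii) are then immediate specializations. For (i), the square \eqref{eq:cover operadic composition} is the $r=a=1$ instance of \eqref{eq:2 cover}, once I check that the map $\wt\Gamma$ attached to the type-$1$ tree-pair reduces to the operadic composition $\sigma_{s+1}$ on associahedra. Indeed, the relevant integer multi-tuples are $\ul\bn'=((n-t+1))$ and $\ul\bn''=((t))$, so that $\wt W_{\ul\bn'}=K_{n-t+1}$ and $\wt W_{\ul\bn''}=K_t$; and since for $r=1$ the seam tree is trivial, $\Gamma_{2T}$ is determined entirely by its effect on the underlying $K_n$, where by construction it is $\sigma_{s+1}$ (see Remark~\ref{rmk:combinatorics_of_regularized_A-infty_2_categories}). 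For (ii), specializing \eqref{eq:type-1-compatibility-with-alpha-beta} to $i=j=1$, $a=1$ removes the case $(i',j')\ne(i,j)$ entirely and leaves exactly the three-way split of \eqref{eq:compatibility-with-alpha-beta}, together with $\beta_\cL(\chi)=\beta_{\cL'}(\chi')$.

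Property (iii) is the direct reading of Definition~\ref{def:general A_infty-2-flow_category}(iii) in light of the observation above: the system of boundary faces for $\cX(\cL)$ consists only of type-$1$ maps, which are the boundary decomposition maps of $\Mor(M_0,M_1)$. The real work is (iv). Here I would invoke the associativity condition of Definition~\ref{def:general A_infty-2-flow_category}(iv), as made explicit in Remark~\ref{rmk:combinatorics_of_regularized_A-infty_2_categories}, and specialize it to $r=a=1$. Because the seam tree is trivial, a coppice $\ul{2T}\in\wt W_{(n)}^{\text{us}}$ reduces to a single bubble tree, i.e.\ to a face of $K_n$, and the iterated decomposition maps $\varphi^{\ul{2T}}_\cL$ become the standard face inclusions of the associahedron. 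The two distinct ways of factoring a codimension-$2$ face through codimension-$1$ faces --- a nested insertion and a pair of disjoint insertions --- are exactly the combinatorial inputs of the two diagrams \eqref{eq:associativity-case-1} and \eqref{eq:associativity-case-2}. Matching the parameters $(s,s',t,t')$ of those diagrams with the two ways of composing a pair of type-$1$ maps then yields the required commutativity.

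The main obstacle will be this last bookkeeping in (iv): one must check that the nested case $s'\le t$ and the disjoint case correspond precisely to the two legal ways of iterating $\varphi^{(1)}_{1,1,\cdot,\cdot}$, with the indices transforming as dictated by \eqref{eq:type1_composition}, and that no further associativity relations survive the specialization. Apart from this, I would note that the ambient framework $(*)$ is inherited for free: every piece of the restricted structure --- the base spaces $\leftindex^2\Mor$, the moduli spaces $\cX(\cL)$, the evaluation and forgetful maps, and the embeddings $\varphi^{(1)}_{1,1,s,t}$ --- is by definition a $(*)$-object or $(*)$-map of $2\sC$, so all the framework requirements of Definition~\ref{def:general_A_infty-flow_category} hold automatically.
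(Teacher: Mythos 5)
Your proposal is correct and takes essentially the same route as the paper's own proof: specialize properties (i)--(iv) of Definition~\ref{def:general A_infty-2-flow_category} to $r=a=1$, observe that type-2 and type-3 boundary decomposition maps cannot occur so only the $\varphi^{(1)}_{1,1,s,t}$ survive, and obtain \eqref{eq:associativity-case-1} and \eqref{eq:associativity-case-2} from the associativity axiom applied to tree-pairs with trivial seam tree (nested versus disjoint insertions in the bubble tree). The one point the paper treats more carefully is the identification $W_{(n)}=K_n$ underlying the forgetful map: it cites the poset isomorphism of \cite{bottman_2-associahedra} and explicitly lifts it to a homeomorphism of the topological realizations, whereas you take this identification for granted.
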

\begin{proof}
In the statement above, we implicitly use the fact that the 2-associahedra for $r=1$ can be identified with the standard associahedra $W_{\bn=(n)} = K_n$.
This statement is proven for the poset incarnations of the 1- and 2-associahedra in \cite[Lemma 3.9]{bottman_2-associahedra}, by constructing a poset isomorphism $W_{\bn=(n)} \to K_n$.
This isomorphism lifts to a homeomorphism of the topological incarnations (as defined in \cite{bottman_realizations}) of $W_{\bn=(n)}$ and $K_n$.
Indeed, to lift the poset isomorphism to a homeomorphism, we just need to identify the following two objects: (1) a configuration of a single vertical line with $k$ marked points in $\bR^2$, modulo overall translations and positive dilations; and (2) a configuration of $k$ marked points on $\bR$, modulo overall translations and positive dilations.
These can be identified like so: by using horizontal translations, we can arrange for the vertical line in (1) to have $x$-coordinate 0.
The remaining automorphisms are vertical translations and positive dilations, so this configuration can evidently be identified with the configuration in (2).
\noindent
This also identifies the operadic compositions used in (i) below.

It remains to verify the properties required by Definition~\ref{def:general_A_infty-flow_category}.
\begin{enumerate}
\item[\bf(i)]
The boundary decomposition maps cover the operadic composition on associahedra 
since the commuting square for type-1 decomposition over the 2-associahedra in Definition~\ref{def:general A_infty-2-flow_category} restricts in case $r=1$, $a=1$ to  
\begin{gather}
\xymatrix{
\cX(\cL') \times \cX(\cL'')
\ar[rr]^(0.55){\varphi^{(1)}_{1,1,s,t}} \ar[d]_{p_{\cL'}\times p_{\cL''} } &&
\cX(\cL) \ar[d]^{p_\cL} \\
\bigl( K_{n-t+1}=W_{(n-t+1)}\bigr) \times \bigl( W_{(t)}=K_t \bigr) \ar[rr]_{\qquad\qquad\quad\: \Gamma_{2T}=\sigma_{s+1}} && W_{(n)}=K_n
}
\end{gather}

\item[\bf(ii)]
The boundary decomposition maps are compatible with the maps $\alpha^k_{L^0\ldots L^n}$ and $\beta_{L^0\ldots L^n}$ since the special case $r=1$, $a=1$ of compatibility for the type-1 map in \eqref{eq:type-1-compatibility-with-alpha-beta} applied to 
$(\chi',\chi'')\in \cX(\cL')\leftindex_{\alpha^{s+1}}\times_\beta \cX(\cL'')$ 
and $\chi \coloneqq \varphi_{\cL}^{s,t}(\chi',\chi'')$ 
with
$\cL=(L^0 \ldots L^n)$, 
$\cL'=(L^0 \ldots L^s,L^{s+t} \ldots L^n)$, 
$\cL''=(L^s \ldots L^{s+t})$
yields $\beta^1_{\cL}(\chi)= \beta^1_{\cL'}(\chi')$ and
\begin{equation}
\alpha^{1,1,k}_{\cL}(\chi) \;=\; 
\begin{cases}
 \alpha^{1,1,k}_{\cL'}(\chi') & \quad\text{for} \;  k \leq s    \\
\alpha^{1,1,k-s}_{\cL''}(\chi'') & \quad \text{for} \; s+1 \leq k \leq s+t \\
 \alpha^{1,1,k-t+1}_{\cL'}(\chi') & \quad\text{for} \;  k\geq s+t + 1 .
\end{cases}
\end{equation}
This confirms \eqref{eq:compatibility-with-alpha-beta}.

\smallskip

\item[\bf(iii)]
For any $\cL=(L^0,\ldots,L^n) \subset \Ob_{\Mor(M_0,M_1)}$ the 
collection of boundary decomposition maps $\varphi_{L^0 \ldots L^n}^{s,t}$ 
for $s,t\geq 0$ with $s+t\leq n$ form a system of boundary faces for $\cX(L^0,\ldots,L^n)$ 
since this is the special case $r=1$, $a=1$ of Definition~\ref{def:general A_infty-2-flow_category}.
Indeed, the boundary decomposition maps of type-2 require $r\geq 3$ whereas type-3 requires $r\geq 2$.
And for type-1 the collection $\varphi^{(1)}_{1,1,s,t}=\varphi_{L^0 \ldots L^n}^{s,t}$ for $s,t\geq 0$ with $s+t \leq n$ forms a system of boundary faces for $\cX(\cL)$ and agrees with the collection required in Definition~\ref{def:general_A_infty-flow_category}.

\smallskip

\item[\bf(iv)]
The boundary decomposition maps are associative in the sense of \eqref{eq:associativity-case-1} and \eqref{eq:associativity-case-2} by the special case $r=1$, $a=1$ of associativity in Definition~\ref{def:general A_infty-2-flow_category}.
Indeed, \eqref{eq:associativity-case-1} follows by applying the associativity axiom to the decomposition map $\varphi^{2T}_{\ul\bn}$, where $2T$ is the tree-pair with trivial seam tree $T_s = \bullet$ and the following bubble tree:
\begin{figure}[H]
\tiny
\centering
\def\svgwidth{0.175\columnwidth}
\begingroup%
  \makeatletter%
  \providecommand\color[2][]{%
    \errmessage{(Inkscape) Color is used for the text in Inkscape, but the package 'color.sty' is not loaded}%
    \renewcommand\color[2][]{}%
  }%
  \providecommand\transparent[1]{%
    \errmessage{(Inkscape) Transparency is used (non-zero) for the text in Inkscape, but the package 'transparent.sty' is not loaded}%
    \renewcommand\transparent[1]{}%
  }%
  \providecommand\rotatebox[2]{#2}%
  \newcommand*\fsize{\dimexpr\f@size pt\relax}%
  \newcommand*\lineheight[1]{\fontsize{\fsize}{#1\fsize}\selectfont}%
  \ifx\svgwidth\undefined%
    \setlength{\unitlength}{36.16171057bp}%
    \ifx\svgscale\undefined%
      \relax%
    \else%
      \setlength{\unitlength}{\unitlength * \real{\svgscale}}%
    \fi%
  \else%
    \setlength{\unitlength}{\svgwidth}%
  \fi%
  \global\let\svgwidth\undefined%
  \global\let\svgscale\undefined%
  \makeatother%
  \begin{picture}(1,1.9717714)%
    \lineheight{1}%
    \setlength\tabcolsep{0pt}%
    \put(0,0){\includegraphics[width=\unitlength,page=1]{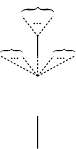}}%
    \put(0.76884465,1.38625847){\color[rgb]{0,0,0}\makebox(0,0)[lt]{\lineheight{0}\smash{\begin{tabular}[t]{l}$t-s'$\end{tabular}}}}%
    \put(0.47492199,1.94199791){\color[rgb]{0,0,0}\makebox(0,0)[lt]{\lineheight{0}\smash{\begin{tabular}[t]{l}$t'$\end{tabular}}}}%
    \put(0,0){\includegraphics[width=\unitlength,page=2]{r=1_a=1_associativity-case-1.pdf}}%
    \put(0.62332801,0.85818925){\color[rgb]{0,0,0}\makebox(0,0)[lt]{\lineheight{0}\smash{\begin{tabular}[t]{l}$n-s-t-t'$\end{tabular}}}}%
    \put(0.13162118,0.85818925){\color[rgb]{0,0,0}\makebox(0,0)[lt]{\lineheight{0}\smash{\begin{tabular}[t]{l}$s$\end{tabular}}}}%
    \put(0.14150516,1.38625847){\color[rgb]{0,0,0}\makebox(0,0)[lt]{\lineheight{0}\smash{\begin{tabular}[t]{l}$s'$\end{tabular}}}}%
  \end{picture}%
\endgroup%

\end{figure}
\noindent
\eqref{eq:associativity-case-2} follows in the same way, but with $2T$ the tree-pair with trivial seam tree and the following bubble tree:
\begin{figure}[H]
\tiny
\centering
\def\svgwidth{0.6\columnwidth}
\begingroup%
  \makeatletter%
  \providecommand\color[2][]{%
    \errmessage{(Inkscape) Color is used for the text in Inkscape, but the package 'color.sty' is not loaded}%
    \renewcommand\color[2][]{}%
  }%
  \providecommand\transparent[1]{%
    \errmessage{(Inkscape) Transparency is used (non-zero) for the text in Inkscape, but the package 'transparent.sty' is not loaded}%
    \renewcommand\transparent[1]{}%
  }%
  \providecommand\rotatebox[2]{#2}%
  \newcommand*\fsize{\dimexpr\f@size pt\relax}%
  \newcommand*\lineheight[1]{\fontsize{\fsize}{#1\fsize}\selectfont}%
  \ifx\svgwidth\undefined%
    \setlength{\unitlength}{151.096845bp}%
    \ifx\svgscale\undefined%
      \relax%
    \else%
      \setlength{\unitlength}{\unitlength * \real{\svgscale}}%
    \fi%
  \else%
    \setlength{\unitlength}{\svgwidth}%
  \fi%
  \global\let\svgwidth\undefined%
  \global\let\svgscale\undefined%
  \makeatother%
  \begin{picture}(1,0.38319227)%
    \lineheight{1}%
    \setlength\tabcolsep{0pt}%
    \put(0,0){\includegraphics[width=\unitlength,page=1]{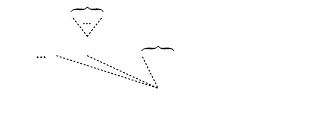}}%
    \put(0.49595314,0.25756154){\color[rgb]{0,0,0}\makebox(0,0)[lt]{\lineheight{0}\smash{\begin{tabular}[t]{l}$s'$\end{tabular}}}}%
    \put(0,0){\includegraphics[width=\unitlength,page=2]{r=1_a=1_associativity-case-2.pdf}}%
    \put(0.08100246,0.25756154){\color[rgb]{0,0,0}\makebox(0,0)[lt]{\lineheight{0}\smash{\begin{tabular}[t]{l}$s$\end{tabular}}}}%
    \put(0.26954459,0.37606664){\color[rgb]{0,0,0}\makebox(0,0)[lt]{\lineheight{0}\smash{\begin{tabular}[t]{l}$t$\end{tabular}}}}%
    \put(0.71555432,0.37606664){\color[rgb]{0,0,0}\makebox(0,0)[lt]{\lineheight{0}\smash{\begin{tabular}[t]{l}$t'$\end{tabular}}}}%
    \put(0.81714423,0.25756154){\color[rgb]{0,0,0}\makebox(0,0)[lt]{\lineheight{0}\smash{\begin{tabular}[t]{l}$n-s-t-s'-t'$\end{tabular}}}}%
  \end{picture}%
\endgroup%

\end{figure}
\end{enumerate}
\end{proof}

Note that this $A_\infty$-structure on the morphisms utilizes only the composition data for $r=1$, $a=1$.
Here $r=1$ reflects the fact that we restricted the structure to two fixed objects.
Then $a=1$ is related to the fact that $(A_\infty,2)$-flow categories will typically be compatible with fiber products for $r\leq 2$ in the sense of the following definition.
This is because the associahedra $K_r$ for $r\leq 2$ are points, so that fiber products over these base spaces $K_r$ are actually Cartesian products.
In such cases the composition data for $a\geq 2$ can be constructed by Cartesian products of the $a=1$ data.
In a general $(A_\infty,2)$-flow category, however, the definition above requires no relationship between the data for $a\geq 2$ and that for $a=1$.

\begin{definition}\label{def:compatible}
A regularized $(A_\infty,2)$-flow category $2\sC$ as in Definition~\ref{def:general A_infty-2-flow_category} is called \emph{compatible with fiber products for $r= r_c$} if 
for each choice of $a \geq 2$, $\underline\bn=(\bn^1, \ldots, \bn^a) \in \bigl(\bZ_{\geq0}^r\bigr)^a$, and collections of objects $M_0,\ldots,M_r\in\Ob$ and 1-morphisms as in \eqref{eq:curly-L}, 
\begin{equation} 
\cL \: = \:
\left(\begin{array}{c}
 \cL^1 = \left( L_{(i-1)i}^{1,k} \right)_{ 1\leq i\leq r , 0\leq k\leq n_i^1} 
\\ \vdots \\ 
\cL^a = \left( L_{(i-1)i}^{a,k} \right)_{ 1\leq i\leq r , 0\leq k\leq n_i^a}
\end{array}\right)
\end{equation}
the associated $(*)$moduli space and its $(*)$maps  are the fiber products 
\begin{equation}\label{eq:fiber-compatible}
\begin{array}{rlrl} 
\textstyle \cX(\cL) & = \: \prod^{K_r}_{1\leq j \leq a}  \cX(\cL^j) ,  
& \qquad \qquad 
\underline\alpha_\cL  &= \: \underline\alpha_{\cL^1}\times \ldots \times \underline\alpha_{\cL^a} \big|_{\cX(\cL)}  \\ 
p_\cL & = \: p_{\cL^1}\times \ldots \times p_{\cL^a} \big|_{\cX(\cL)}  
& \qquad \qquad 
\underline\beta_\cL  &= \: \underline\beta_{\cL^1}\times \ldots \times \underline\beta_{\cL^a} \big|_{\cX(\cL)}  .
\end{array} 
\end{equation}
Here the iterated fiber product $ \prod^{K_r}$ is defined with respect to the maps $p_{\cL^j}: \cX(\cL^j) \to W_{\bn^j}$ composed with the forgetful maps $\pi: W_{\bn^j} \to K_r$.
\end{definition}

The consequences of this compatibility will be discussed in the more algebraic context of \S\ref{ssec:2-extract}.
The next step towards making sense of the notion of $(A_\infty,2)$-flow category is to explain the geometric and combinatorial motivation for the boundary decomposition maps.

\begin{remark}
\label{rmk:three_types_of_comp_maps}
The three types of boundary decomposition maps in Definition~\ref{def:general A_infty-2-flow_category} arise from the codimension-1 degenerations of a witch curve in a 2-associahedron, or more generally a tuple of witch curves in a fiber product of 2-associahedra.
As in \cite{bottman_realizations}, witch curves are ``quilted spheres'' which arise by stereographic projection from a configuration of parallel lines -- called ``seams'' -- in the plane and any number of marked points on the seams.
Another marking -- the ``output marked point'' -- is added to the point on the sphere that represents infinity in the plane.
Thus the seams on the witch curves are circles that intersect tangentially at the output marked point, and each of these circles can carry input marked points.
The relative location of the seams induces maps from each 2-associahedron $W_\bn$ for $\bn \in \bZ_{\geq0}^r$ to the 1-associahedron $K_r$.
Thus the fiber product of 2-associahedra are made up of tuples of witch curves with the same relative location of their seams.

Mildly generalizing the terminology in \cite{bottman_2-associahedra} there are three types of codimension-1 degenerations of such a tuple of witch curves in a fiber product of 2-associahedra, depicted in Figure~\ref{fig:degens of fiber product}:  

\begin{enumerate}
\item
In a type-1 move, consecutive marked points on a single seam on a single sphere collide; 
a single bubble forms, carrying this seam and all the marked points involved in the collision.
This corresponds to the boundary decomposition maps $\varphi^{(1)}_{i,j,s,t} $.

\item
In a type-2 move, a proper subset of consecutive seams on all spheres collides.
Collections of marked points on these seams can collide simultaneously.
Bubbles form whenever marked points are colliding or are involved in the collision of seams.
This corresponds to the boundary decomposition maps $\varphi^{(2)}_{s,t,\underline\bm}$.

\item
In a type-3 move, marked points on a single sphere diverge to infinity -- which is equivalent to all seams on a single sphere colliding without changing the relative location, i.e.\ map to the 1-associahedron.
Collections of marked points on these seams can collide simultaneously.
Bubbles form whenever marked points are colliding or are involved in the collision of seams.
This corresponds to the boundary decomposition maps $\varphi^{(3)}_{j,\underline{\bm}}$.
\null\hfill$\triangle$
\end{enumerate}
\end{remark}

Analogous to the $A_\infty$ case, Theorem~\ref{thm:A-infinity_2_flow_to_linear} outlines how a regularized $(A_\infty,2)$-flow category in any regularization framework induces a linear $(A_\infty,2)$ category.
To give a precise yet accessible proof, we will work in the simplified $(\cC^0,\text{Morse})$-framework of Definition~\ref{def:C0-framework}, as specified in the following definition.
In applications, this framework can be achieved as discussed in Remark~\ref{rmk:cascades}.

\begin{definition}
\label{def:A_infty-2-flow_category}
A \emph{regularized $(A_\infty,2)$-flow category $2\sC$ 
in the $(\cC^0,\text{Morse})$-framework} of Definition~\ref{def:C0-framework} consists of:

\begin{itemize}
\item
A category $(\Ob, \leftindex^1\Mor)=(\Ob_{2\sC}, \leftindex^1\Mor_{2\sC})$.

\smallskip

\item
For every $L, L' \in  \leftindex^1\Mor$, a finite set $\leftindex^2\Mor(L,L')=\leftindex^2\Mor_{2\sC}(L,L')$.

\smallskip

\item
For every choice of integers $r \geq 1, a \geq 1$,
$\underline\bn=(\bn^1, \ldots, \bn^a) \in \bigl(\bZ_{\geq0}^r\bigr)^a$ 
and collections of objects $M_0,\ldots,M_r\in\Ob$ and 1-morphisms
$\cL = \Bigl(L_{(i-1)i}^{j,k} \in  \leftindex^1\Mor(M_{i-1},M_i) \Bigr)_{(i,j,k)\in I^{\, r,a}_{\underline\bn} }$
indexed as in \eqref{eq:curly-L} a $\cC^0$-manifold with boundary $\cX(\cL)$ that is equipped with
a locally constant energy function $\cE:\cX(\cL)\to\bR$ such that $\cE^{-1}((-\infty,E])$ is compact for all $E\in\bR$.
Moreover, $\cX(\cL)$ is equipped with $\cC^0$ evaluation maps
\begin{align}
\xymatrix{
&&& \leftindex^2\Mor(L_{(i-1)i}^{j,k-1},L_{(i-1)i}^{j,k}) 
\quad\text{for}\quad (i,j,k)\in {'\!I^{\,r,a}_{\underline\bn}} \\
\cX(\cL) \ar@/^1.0pc/[urrr]^(0.3){\alpha^{i,j,k}_\cL}\ar[rrr]^(0.3){\beta^j_\cL}\ar@/_1.0pc/[drrr]_(0.3){p_\cL} &&& \leftindex^2\Mor(L_{01}^{j,0}\circ\cdots\circ L_{(r-1)r}^{j,0},L_{01}^{j,n_1^j}\circ\cdots\circ L_{(r-1)r}^{j,n_r^j}) 
\quad\text{for}\quad 1 \leq j \leq a \\
&&& \prod^{K_r}_{1\leq j \leq a}  W_{\bn^j},
}
\end{align}
where $W_\bn$ is the $\bn$-th 2-associahedron as in \cite{bottman_realizations,bottman_oblomkov}, 
and $\prod^{K_r}_{1\leq j\leq a} W_{\bn^j}$ indicates the iterated fiber product with respect to the forgetful maps $W_{\bn^j}\to K_r$ on each of the $a$ factors.
 
\smallskip

\item
For any choice of integers $r \geq 1$, $a\geq 1$, $\bn \in (\bZ_{\geq0}^r)^a$ and collection of 1-morphisms $\cL$ as in \eqref{eq:curly-L}, continuous embeddings -- called \emph{boundary decomposition maps} -- of three types $\tau=1,2,3$ as in \eqref{eq:type1_composition}--\eqref{eq:type3_composition}
\begin{equation}
\varphi^{(\tau)}_{\cdots} \: \colon \quad
\cX(\cL') \:\leftindex_{\underline\alpha'}\times_{\underline\beta''}\: \cX(\cL'') \quad\longrightarrow\quad \cX(\cL)  
\end{equation}

\end{itemize}
We require that these boundary decomposition maps satisfy the following properties.
\begin{enumerate}
\item[\bf(i)]
Each boundary decomposition map covers the corresponding operadic composition map on fiber products of 2-associahedra as in \eqref{eq:2 cover}.

\smallskip

\item[\bf(ii)]
Each boundary decomposition map is compatible with the evaluation maps $\alpha^{i,j,k}_\cL$ and $\beta^j_\cL$ as in \eqref{eq:type-1-compatibility-with-alpha-beta}--\eqref{eq:type-3-compatibility-with-alpha-beta}, and makes the energy functions additive in the sense that 
\begin{equation} \label{eq:2-additive}
\cE\bigl( \varphi^{(\tau)}_{\cdots}(\underline\chi',\underline\chi'') \bigr) 
\ = \: 
\cE(\underline\chi') + \cE(\underline\chi'')   .
\end{equation}

\smallskip

\item[\bf(iii)]
For any collection of 1-morphisms $\cL$ as in \eqref{eq:curly-L}
the collection of boundary decomposition maps $\varphi^{(\tau)}_{\cdots}$ for $\tau=1,2,3$
is a system of boundary faces for $\cX(\cL)$ in the sense of Definition~\ref{def:mfds_with_faces}.

\item[\bf(iv)]
The boundary decomposition maps are associative in the sense that when we form finer decomposition maps $\varphi^{\ul{2T}}_\cL$ as in Remark~\ref{rmk:combinatorics_of_regularized_A-infty_2_categories}, the resulting map is independent of the order in which we composed boundary decomposition maps.

\end{enumerate}

Moreover, a regularized $(A_\infty,2)$-flow category $2\sC$ 
in the $(\cC^0,\text{Morse})$-framework is called \emph{compatible with fiber products for $r= r_c$} if it satisfies the conditions of Definition~\ref{def:compatible} and the energy is additive in the sense that for $(\chi^1,\ldots,\chi^a) \in \cX(\cL) =  \prod^{K_r}_{1\leq j \leq a}  \cX(\cL^j)$ we have
\begin{equation}\label{eq:fiber compatible energy}
\cE(\chi^1,\ldots,\chi^a)=\cE(\chi^1)+\ldots+\cE(\chi^a) .
\end{equation}  
\null\hfill$\triangle$
\end{definition}

\section{Extracting linear categorical structures from $A_\infty$- and $(A_\infty,2)$-flow categories}
\label{sec:extracting}

This section shows that $A_\infty$- and $(A_\infty,2)$-flow categories give rise to linear $A_\infty$- and $(A_\infty,2)$-categories by replacing base spaces with chain complexes and replacing moduli spaces with maps to base spaces with push-pull maps between the chain complexes.

\subsection{From $A_\infty$-flow categories to linear $A_\infty$-categories}
\label{sec:flow to cat}

We will start by reviewing the notion of a curved $\Lambda$-linear $A_\infty$-category $\sC_\lin$ for any coefficient ring (or field) $\Lambda$.
As explained in \cite[Rmk.\ 2.12]{auroux_beginners_guide}, this is a generalization of an ordinary (``flat'') $A_\infty$-category where we associate to every object $L$ an element $\mu_0=\mu_0(L)\in \Mor(L,L)$, and modify the $A_\infty$-relations by incorporating these elements into the sum.
\noindent
Here and throughout, we will avoid specifying signs by working with a coefficient ring (or field) $\Lambda$ of characteristic 2.

\begin{definition} \label{def:A_infty-category}
A \emph{$\Lambda$-linear $A_\infty$-category $\sC$}  consists of:
\begin{itemize}
\item
A set $\Ob=\Ob_{2\sC}$.

\smallskip

\item
For every $L, L' \in \Ob$, a $\Lambda$-module (or vector space over $\Lambda$) $\Mor(L,L')=\Mor_{2\sC}(L,L')$.

\smallskip

\item
For every $n \geq 0$ and $L^0, \ldots, L^n \in \Ob$, a  $\Lambda$-linear map
\begin{align} 
\mu_n \:: \:  \Mor(L^0,L^1)\otimes\cdots\otimes\Mor(L^{n-1},L^n) \;\to\; \Mor(L^0,L^n).
\end{align}
This includes algebraic curvature from $n=0$ for each $L^0 \in \Ob$ given by $\mu_0 \in \Mor(L^0,L^0)$.
\end{itemize}

\noindent
We require that these $\Lambda$-linear maps satisfy the (curved) $A_\infty$ equations for each $n\geq 0$, 
\begin{align}
\label{eq:A_infty-equations}
\textstyle
0 \:=\: \sum_{s+t\leq n}
\mu_{n-t+1}(x_1\smotimes\ldots\smotimes x_s \smotimes \mu_s(x_{s+1}\smotimes \ldots \smotimes x_{s+t}) \smotimes x_{s+t+1}\smotimes \ldots \smotimes x_n) 
\end{align}
for any choice of $x_k\in \Mor(L^{k-1},L^k)$ for $k=1,\ldots,n$.
This sum is over $s,t\geq 0$ with $s+t \leq n$, so that the terms with $t=0$ contribute
$\sum_{s=0}^n \mu_{n+1}(\ldots\smotimes x_s \smotimes\mu_0 \smotimes x_{s+1} \smotimes\ldots)$.
\end{definition}

The following proposition states how an $A_\infty$-flow category in any regularization framework gives rise to a linear $A_\infty$-category -- assuming the framework satisfies the requirements of Definition~\ref{def:framework}.
We then specify to the $(\cC^0,\text{Morse})$-framework of Definition~\ref{def:C0-framework} to prove that this construction satisfies the $A_\infty$-equations.
We expect this proof to generalize to other regularization frameworks -- using framework-specific analogues of the fact in Lemma~\ref{lem:system of boundary faces induces identity} that a system of boundary faces induces an algebraic identity between push-pull maps.

\begin{proposition}
\label{prop:A-infinity_flow_to_linear}
Suppose that $\sC$ is a regularized $A_\infty$-flow category in a framework $(*)$ 
as in Definition~\ref{def:general_A_infty-flow_category}, for example in the $(\cC^0,\text{Morse})$-framework as in Definition~\ref{def:A_infty-flow_category}.
Then it gives rise to a $\Lambda$-linear $A_\infty$-category $\sC_\lin$ as follows:
\begin{itemize}
\item
The set of objects is the same $\Ob_{\sC_\lin} \coloneqq \Ob_\sC$.

\smallskip

\item
For every $L,L'\in \Ob_{\sC_\lin}$, the morphism $\Lambda$-module 
$\Mor_{\sC_\lin}(L,L') \,\coloneqq\: C_* Mor_\sC(L,L')$
is the chain complex associated by the $(*)$-framework to the $(*)$base space $Mor_\sC(L,L')$.

In the $(\cC^0,\text{Morse})$-framework the morphism $\Lambda$-module
$\Mor_{\sC_\lin}(L,L') \,\coloneqq\: \bigoplus_{p\in\Mor_\sC(L,L')} \Lambda \, p $
is generated by the finite set $\Mor_\sC(L,L')$.

\smallskip

\item
For $n \geq 0$ and $\cL \coloneqq (L^0,\ldots,L^n)$ we define the  $n$-ary composition map by push-pull via the $(*)$moduli space $\cX(L^0,\ldots,L^n)$: 
\begin{align} \label{eq:push-pull}
\mu_n
\,\colon \:
\Mor_{\sC_\lin}(L^0,L^1)
\otimes
\cdots
\otimes
\Mor_{\sC_\lin}(L^{n-1},L^n)
& \:\to\:
\Mor_{\sC_\lin}(L^0,L^n)  \\
c_1 \smotimes \ldots \smotimes c_n & \:\mapsto\: {\beta_{\cL}}_* \bigl( ( \alpha^1_{\cL} \times \ldots \times \alpha^n_{\cL} ) ^* (c_1 \times \ldots \times c_n ) \bigr). \nonumber
\end{align}
In the $(\cC^0,\text{Morse})$-framework this push-pull map is given by $\Lambda$-linear extension of the map 
$\mu_n(p_1\smotimes\cdots\smotimes p_n) \coloneqq \sum_{q\in \Mor_\sC(L^0,L^n)} \langle p_1\smotimes\cdots\smotimes p_n,q\rangle \, q$ given on generators by counting the $0$-dimensional part $\cX(L^0 \ldots L^n)_0\subset \cX(L^0 \ldots L^n)$ with Novikov coefficients.
That is, we denote
\begin{equation}
\cX(p_1,\ldots,p_n,q; E)_0 
 \coloneqq  \cX(L^0 \ldots L^n)_0\cap (\alpha^1_{\cL},\ldots,\alpha^n_{\cL},\beta_{\cL})^{-1}\{(p_1,\ldots,p_n,q)\} \cap \cE^{-1}(E)
\end{equation}
for any $p_1\in \Mor_{\sC}(L^0,L^1) \ldots p_n\in \Mor_{\sC}(L^{n-1},L^n), q\in \Mor_{\sC}(L^0,L^n)$, $E\in\bR$, and set 
\begin{align} \label{eq:count}
\langle p_1\smotimes\cdots\smotimes p_n,q\rangle
& \: \coloneqq \: 
\#_\Lambda \bigl( \cX(L^0,\ldots,L^n)_0\cap (\alpha^1_{\cL},\ldots,\alpha^n_{\cL},\beta_{\cL})^{-1}\{(p_1,\ldots,p_n,q)\} \bigr) \\
& \: \coloneqq \: \textstyle \sum_{l=0}^\infty  \#_{\bZ_2} \cX(p_1,\ldots,p_n,q; E_l )_0 \:  T^{E_l} ,  
\nonumber
\end{align}
where $\cE( \cX(L^0,\ldots,L^n)_0\cap (\alpha^1_{\cL},\ldots,\alpha^n_{\cL},\beta_{\cL})^{-1}\{(p_1,\ldots,p_n,q)\} ) = \{E_0, E_1, \ldots \}$ is a discrete set with $E_l < E_{l+1}$ and finitely many elements or $\lim_{l\to\infty} E_l = \infty$ as in Remark~\ref{rmk:energies can be ordered}.
\end{itemize}
\end{proposition}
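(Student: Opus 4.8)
The plan is to establish the $A_\infty$-equations \eqref{eq:A_infty-equations} by the standard ``boundary of a one-dimensional moduli space'' argument, working in the $(\cC^0,\mathrm{Morse})$-framework and feeding the combinatorics of boundary faces into the Novikov-count identities of Lemmas~\ref{lem:Novikov product} and~\ref{lem:system of boundary faces induces identity}. Since each $\mu_n$ is $\Lambda$-linear, I would fix $n\geq0$, objects $L^0,\ldots,L^n$, input generators $p_k\in\Mor_\sC(L^{k-1},L^k)$, and an output generator $q\in\Mor_\sC(L^0,L^n)$, and show that the coefficient of $q$ in the left-hand side of \eqref{eq:A_infty-equations} vanishes. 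Writing $\cL\coloneqq(L^0,\ldots,L^n)$, the relevant space is
\begin{equation*}
Y \: \coloneqq \: \cX(\cL)_1 \cap (\alpha^1_\cL,\ldots,\alpha^n_\cL,\beta_\cL)^{-1}\{(p_1,\ldots,p_n,q)\}.
\end{equation*}
Because the evaluation maps target the finite sets $\Mor_\sC(\cdot,\cdot)$, this preimage is clopen, so $Y$ is a one-dimensional $\cC^0$-manifold with boundary, and it inherits a locally constant energy function whose sublevel sets stay compact; thus Lemma~\ref{lem:system of boundary faces induces identity} will apply to it.

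The heart of the argument is to match each summand of \eqref{eq:A_infty-equations} with the Novikov count of one boundary face of $Y$. Fixing $s,t\geq0$ with $s+t\leq n$ and setting $\cL'\coloneqq(L^0,\ldots,L^s,L^{s+t},\ldots,L^n)$, $\cL''\coloneqq(L^s,\ldots,L^{s+t})$, I would unwind \eqref{eq:push-pull}--\eqref{eq:count} to express the coefficient of $q$ in the $(s,t)$-summand $\mu_{n-t+1}(p_1\smotimes\cdots\smotimes\mu_t(p_{s+1}\smotimes\cdots\smotimes p_{s+t})\smotimes\cdots\smotimes p_n)$ as the sum over an intermediate generator $q'\in\Mor_\sC(L^s,L^{s+t})$ of the products
\begin{equation*}
\textstyle
\langle p_{s+1}\smotimes\cdots\smotimes p_{s+t},\,q'\rangle_{\cX(\cL'')}\cdot\langle p_1\smotimes\cdots\smotimes p_s\smotimes q'\smotimes p_{s+t+1}\smotimes\cdots\smotimes p_n,\,q\rangle_{\cX(\cL')}.
\end{equation*}
Here $q'$ plays the dual role of the output of $\cX(\cL'')$ and the $(s+1)$-st input of $\cX(\cL')$, which is exactly the matching $\alpha^{s+1}_{\cL'}=\beta_{\cL''}$ in the fiber product $\cX(\cL')\leftindex_{\alpha^{s+1}}\times_\beta\cX(\cL'')$. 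Invoking the compatibility \eqref{eq:compatibility-with-alpha-beta} with the evaluation maps, the energy-additivity \eqref{eq:energy additive}, and the multiplicativity of Novikov counts (Lemma~\ref{lem:Novikov product}), I would identify this double sum with $\#_\Lambda F^{s,t}$, where $F^{s,t}$ is the face $\varphi^{s,t}_\cL$ restricted to the preimage of $(p_1,\ldots,p_n,q)$; here one uses that the zero-dimensional part of a fiber product over the finite set $\Mor_\sC(L^s,L^{s+t})$ is the disjoint union over $q'$ of products of zero-dimensional parts.

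To finish, property (iii) of Definition~\ref{def:A_infty-flow_category} makes $(\varphi^{s,t}_\cL)_{s+t\leq n}$ a system of boundary faces for $\cX(\cL)$; restricting to the clopen preimage of the generators and to the one-dimensional part exhibits $(F^{s,t})_{s+t\leq n}$ as a system of boundary faces for $Y$ in the sense of Definition~\ref{def:mfds_with_faces}. Lemma~\ref{lem:system of boundary faces induces identity} then yields $\sum_{s+t\leq n}\#_\Lambda F^{s,t}=0$, which is precisely the vanishing of the coefficient of $q$, and the $A_\infty$-equations follow since $q$, the $p_k$, and $n$ were arbitrary.

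I expect the main obstacle to be the identification in the middle step: turning the composite of two push-pull maps into a single count over the fiber product. The delicate points are that the intermediate sum over $q'$ is a priori a sum of products of Novikov \emph{series}, so one must check that these multiply correctly level-by-level in the energy filtration---which is where \eqref{eq:energy additive} and Lemma~\ref{lem:Novikov product} enter---and that the index dictionary \eqref{eq:compatibility-with-alpha-beta} pins the fiber-product variable to $q'$ exactly, with only the $(0\text{-dim})\times(0\text{-dim})$ stratum of each fiber product contributing to $\partial Y$. The remaining ingredients (clopenness of the preimages, compactness of energy sublevel sets, and inheritance of the system of boundary faces by $Y$) should be routine consequences of the framework axioms.
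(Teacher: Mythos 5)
Your proposal is correct and follows essentially the same route as the paper's proof: fix generators, rewrite each $(s,t)$-summand of \eqref{eq:A_infty-equations} as a sum over the intermediate generator of products of Novikov counts, convert these via energy additivity \eqref{eq:energy additive} and fiber-product multiplicativity into the count of the face $\varphi^{s,t}_\cL$ restricted to the clopen preimage of $(p_1,\ldots,p_n,q)$, and conclude by applying Lemma~\ref{lem:system of boundary faces induces identity} to the induced system of boundary faces of the $1$-dimensional part. The only cosmetic difference is that you invoke Lemma~\ref{lem:Novikov product} for the product identity, while the paper re-derives it inline (its step \eqref{eq:Novikov sum}), including the same boundedness-below argument you flag as the delicate point.
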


\begin{proof}
The proof will be given in the $(\cC^0,\text{Morse})$-framework, relying on the properties in Definitions~\ref{def:C0-framework} and \ref{def:A_infty-flow_category} of a regularized $A_\infty$-flow category in this framework.
We obtain well-defined  $\Lambda$-modules $\bigoplus_{p\in\Mor_\sC(L,L')} \Lambda \, p $ since each $\Mor_\sC(L,L')$ is assumed to be a finite set.
To verify that $\langle p_1\smotimes\cdots\smotimes p_n,q\rangle\in\Lambda$ are well-defined coefficients in the universal Novikov field \eqref{eq:novikov}, we use the fact that by Definition~\ref{def:C0-framework}~(i) -- after restriction to the $0$-dimensional part of the $(\cC^0,\text{Morse})$-moduli space -- the energy function $\cE:\cX(L^0,\ldots,L^n)_0\to\bR$ is locally constant, and $\cE^{-1}((-\infty,E])$ is compact for all $E\in\bR$.
Thus each $\cE^{-1}((-\infty,E])$ can contain only finitely many points, which have finitely many energy values $\cE\bigl(\cE^{-1}((-\infty,E])\bigr) = \{ E_0 < E_1 < \ldots < E_L \}$.
As we increase $E\to\infty$, the ordered list of energy values either ends with finitely many entries or continues with $\lim_{l\to\infty} E_l = \infty$.
Moreover, each $\cX(p_1,\ldots,p_n,q; E_l )_0=\cE^{-1}(E_l)$ is a finite number of points, so has a well-defined count $\#_{\bZ_2} \cX(p_1,\ldots,p_n,q; E_l )_0$ modulo 2.
Thus each coefficient $\langle p_1\smotimes\cdots\smotimes p_n,q\rangle \coloneqq \sum_{l=0}^\infty  \#_{\bZ_2} \cX(p_1,\ldots,p_n,q; E_l )_0 \:  T^{E_l}\in\Lambda$ in the $n$-ary composition map
is a well-defined expression in \eqref{eq:novikov}.

To verify the $A_\infty$ equations \eqref{eq:A_infty-equations} we will utilize Definition~\ref{def:A_infty-flow_category}~(iii), which ensures that the collection of boundary decomposition maps $\varphi^{s,t}_{L^0 \ldots L^n}$ for $s,t\geq 0$ with $s+t\leq n$ in \eqref{eq:A_infty-flow_recursive} forms a system of boundary faces for the $\cC^0$-manifold $\cX(L^0,\ldots,L^n)$ with boundary.
Spelling out Definition~\ref{def:mfds_with_faces}: 
\begin{enumerate}
\item[(a)]
Each boundary decomposition map is a $\cC^0$-embedding
$$
\varphi^{s,t}_{L^0 \ldots L^n}\colon
\cX(L^0,\ldots,L^s,L^{s+t},\ldots,L^n)
\:\leftindex_{\alpha^{s+1}}\times_\beta\:
\cX(L^s,\ldots,L^{s+t})
\to
\cX(L^0,\ldots,L^n)
$$
whose image lies in the boundary $\partial\cX(L^0,\ldots,L^n)$, and the image of the interior is an open subset of the boundary.
\item[(b)]
There is an open dense subset $O\subset\partial\cX(L^0,\ldots,L^n)$ so that each point of $O$ lies in the image of exactly one boundary decomposition map $\varphi^{s,t}_{L^0 \ldots L^n}$ (restricted to its interior).
\end{enumerate}
When restricting to components of fixed dimension as in Definition~\ref{def:mfds_with_faces}, property (a) implies that each boundary decomposition map restricts to $\cC^0$-embeddings for $m,m'\geq 0$ 
$$
\cX(L^0,\ldots,L^s,L^{s+t},\ldots,L^n)_m
\:\leftindex_{\alpha^{s+1}}\times_\beta\:
\cX(L^s,\ldots,L^{s+t})_{m'}
\to
\partial \cX(L^0,\ldots,L^n)_{m+m'+1}.
$$
Indeed, since the boundary decomposition maps are embeddings, the image of the interior under such a map is a submanifold of dimension $m+m'$.
By property (a) this image is an open subset of the boundary, hence it lies in the boundary of a component of dimension $m+m'+1$.
Now consider the 1-dimensional part $\cX(L^0,\ldots,L^n)_1\subset \cX(L^0,\ldots,L^n)$.
By definition, this is a 1-dimensional $\cC^0$-manifold with boundary, whose boundary is a $\cC^0$-manifold of dimension $0$, that is a disjoint union of points.
Now by property (b) each such boundary point $\chi\in\partial\cX(L^0,\ldots,L^n)_1$ lies in the image of a unique boundary decomposition map -- in fact, we have $\chi=\varphi^{s,t}_{L^0 \ldots L^n}(\chi',\chi'')$ for some 
$(\chi',\chi'')\in \cX(L^0,\ldots,L^s,L^{s+t},\ldots,L^n)_0 \:\leftindex_{\alpha^{s+1}}\times_\beta\: \cX(L^s,\ldots,L^{s+t})_0$.
Thus the collection of maps
$
\varphi^{s,t}_{L^0 \ldots L^n}|_{\cX(L^0 \ldots L^s,L^{s+t} \ldots L^n)_0
\:\leftindex_{\alpha^{s+1}}\times_\beta\: \cX(L^s,\ldots,L^{s+t})_0}
$
induces a bijection
\begin{align}
\label{eq:bijection}
\bigsqcup_{s+t\leq n}
\cX(L^0,\ldots,L^s,L^{s+t},\ldots,L^n)_0
\leftindex_{\alpha^{s+1}}\times_\beta
\cX(L^s,\ldots,L^{s+t})_0
\quad\overset{\sim}{\longrightarrow}\quad
\partial\cX(L^0,\ldots,L^n)_1.
\end{align}
Moreover, the energy is additive $\cE(\varphi^{s,t}_{L^0 \ldots L^n}(\chi',\chi''))=\cE(\chi')+\cE(\chi'')$ with respect to this bijection by Definition~\ref{def:A_infty-flow_category}~(ii).

Since the composition operations $\mu_n$ are $\Lambda$-linear, it suffices to check the $A_\infty$-relations  \eqref{eq:A_infty-equations} on generators.
For fixed $n\geq0$, $L^0,\ldots,L^n\in \Ob_{\sC_\lin}$ and $p_k \in \Mor_\sC(L^{k-1},L^k)$ for $1 \leq k \leq n$ we have
\begin{align*}
& \sum_{s+t\leq n}
\mu_{n-s+1}(p_1\smotimes \ldots\smotimes p_s\smotimes \mu_s(p_{s+1}\smotimes \ldots\smotimes p_{s+t})\smotimes p_{s+t+1}\smotimes \ldots,p_n) \\
&\;=\; 
\sum_{s+t\leq n} \mu_{n-s+1}\Bigl(p_1\smotimes \ldots,p_s\smotimes 
\Bigl( \sum_{p\in \Mor_\sC(L^s,L^{s+t})} 
\langle p_{s+1}\smotimes\ldots\smotimes p_{s+t} , p \rangle \,
 p \Bigr)\:  \smotimes p_{s+t+1}\smotimes \ldots\smotimes p_n\Bigr) \\
&\;=\; 
\sum_{s+t\leq n}
\sum_{q\in \Mor_\sC(L^0,L^n)}
\sum_{p\in \Mor_\sC(L^s,L^{s+t})} 
\langle p_{s+1}\smotimes\ldots\smotimes p_{s+t} , p \rangle \, 
\langle p_1\smotimes\cdots p_s \smotimes p \smotimes p_{s+t+1} \cdots \smotimes p_n,q\rangle  \, q .
\end{align*}
So \eqref{eq:A_infty-equations} is equivalent to the following identity for all $p_k \in \Mor_\sC(L^{k-1},L^k)$ and $q\in \Mor_\sC(L^0,L^n)$ with 
$\cL' \coloneqq  (L^0,\ldots L^s,L^{s+t},\ldots,L^n)$, $\cL'' \coloneqq  (L^s,\ldots,L^{s+t})$, and $\cL \coloneqq (L^0,\ldots,L^n)$ 
\begin{align*}
0
&\;=\; 
\sum_{s+t\leq n}
\sum_{p\in \Mor_\sC(L^s,L^{s+t})} 
\langle p_{s+1}\smotimes\ldots\smotimes p_{s+t} , p \rangle \, 
\langle p_1\smotimes\cdots p_s \smotimes p \smotimes p_{s+t+1} \cdots \smotimes p_n,q\rangle  \\
&\;=\; 
\sum_{s+t\leq n}
\sum_{p\in \Mor_\sC(L^s,L^{s+t})} 
\left(
\begin{aligned}
& \#_\Lambda \bigl(\cX(\cL'')_0\cap (\alpha^1_{\cL''} \ldots \alpha^s_{\cL''},\beta_{\cL''})^{-1}\{(p_{s+1},\ldots,p_{s+t},p)\} \bigr) \\
& \cdot \#_\Lambda  \bigl(\cX(\cL')_0\cap (\alpha^1_{\cL'} \ldots \alpha^{n-s+1}_{\cL'},\beta_{\cL'})^{-1}\{(p_1,\ldots,p_s,p,p_{s+t+1},\ldots,p_n,q)\} \bigr)  
\end{aligned}
\right)
\\
&\;=\; 
\sum_{s+t\leq n}
\sum_{p\in \Mor_\sC(L^s,L^{s+t})} 
\#_\Lambda  \left\{  (\chi',\chi'')
\,\left| \;
\begin{aligned}
&  \chi' \in \cX(\cL')_0,  \chi'' \in \cX(\cL'')_0, \\
& \alpha^1_{\cL''}(\chi'')=p_{s+1},\ldots, \alpha^s_{\cL''}(\chi'')=p_{s+t}, \beta_{\cL''}(\chi'')=p  \\
& \alpha^1_{\cL'}(\chi')=p_1,\ldots, \alpha^s_{\cL'}(\chi')=p_s, \alpha^{s+1}_{\cL'}(\chi')=p,  \\
& \alpha^{s+2}_{\cL'}(\chi')=p_{s+t+1}, \ldots,
\alpha^{n-s+1}_{\cL'}(\chi')=p_n, \beta_{\cL'}(\chi')=q 
\end{aligned}
\right.
\right\}  \\
&\;=\; 
\sum_{s+t\leq n}
\#_\Lambda  \left\{  (\chi',\chi'') 
\,\left| \;
\begin{aligned}
&  (\chi',\chi'')\in \cX(\cL')_0\leftindex_{\alpha^{s+1}}\times_\beta \cX(\cL'')_0  \\
& \alpha^1_{\cL'}(\chi')=p_1,\ldots, \alpha^s_{\cL'}(\chi')=p_s, \\
& \alpha^1_{\cL''}(\chi'')=p_{s+1},\ldots, \alpha^s_{\cL''}(\chi'')=p_{s+t}, \\
& \alpha^{s+2}_{\cL'}(\chi')=p_{s+t+1}, \ldots,
\alpha^{n-s+1}_{\cL'}(\chi')=p_n, \beta_{\cL'}(\chi')=q 
\end{aligned}
\right.
\right\}  \\
&\;=\; 
\sum_{s+t\leq n}
\#_\Lambda  \left\{  \chi \in  \im \varphi^{s,t}_{\cL} \cap \partial \cX(\cL)_1 
\,\left| \; 
 \alpha^1_{\cL}(\chi)=p_1,\ldots, \alpha^{n}_{\cL}(\chi)=p_n, \beta_{\cL}(\chi)=q 
\right.
\right\} \\
&\;=\; 
 \#_\Lambda  \; \partial \bigl(\cX(\cL)_1\cap (\alpha^1_{\cL},\ldots,\alpha^n_{\cL},\beta_{\cL})^{-1}\{(p_1,\ldots,p_n,q)\} \bigr).
\end{align*}
Here the second step (for fixed $s,t$ and $p$) is of the form 
\begin{equation}\label{eq:Novikov sum}
 \#_\Lambda \bigl\{ \chi' \in \cX(\cL'')_0 \,\big|\, \ldots \bigr\} 
 \cdot \#_\Lambda  \bigl\{\chi''\in\cX(\cL')_0  \,\big|\, \ldots \bigr\} 
\;=\; 
\#_\Lambda  \bigl\{  (\chi',\chi'') \in \cX(\cL'')_0\times \cX(\cL')_0 \,\big|\, \ldots \bigr\}  ,
\end{equation}
where we define the Novikov count on the right hand side by setting $\cE(\chi',\chi'') \coloneqq \cE(\chi')+\cE(\chi'')$.
Note that we sum over the same set of pairs $(\chi',\chi'')$ on both sides.
If this set is finite, then the identity holds by multiplying out finite sums 
$$
\textstyle \bigl( \sum_{\chi'}  T^{\cE(\chi')} \bigr) \cdot \bigl( \sum_{\chi''}  T^{\cE(\chi'')} \bigr) 
=
\sum_{\chi'}  \sum_{\chi''}  T^{\cE(\chi')+\cE(\chi'')} 
=
\sum_{(\chi',\chi'')}  T^{\cE(\chi',\chi'')} .
$$
If the set is infinite, then the same identity holds when we restrict both sides to the set of pairs $(\chi',\chi'')$ with $\cE(\chi')+\cE(\chi'')\leq E$ -- which we claim to be finite for any $E\in\bR$.
Then any pair that contributes to the infinite sum does so for some finite $E$, so the limit proves the identity in general, if we can confirm that such sets of pairs with bounded energy $\leq E$ are indeed finite.
The latter will be deduced from the properties of the energy function in Definition~\ref{def:C0-framework}: 
The energy on each moduli space $\cX(\cL')$ and $\cX(\cL'')$ is locally constant -- thus constant on connected components.
Moreover, each sublevel set is compact, so in particular has finitely many components, which ensures that the energy functions are bounded below, $\cE|_{\cX(\cL')}\geq - E'$ and $\cE|_{\cX(\cL'')}\geq - E''$.
Now the set in question is finite since it is a subset of the Cartesian product of finite sets
$$
\bigl\{ (\chi',\chi'') \,\big|\, \cE(\chi')+\cE(\chi'')\leq E \bigr\}   \subset 
\bigl\{ \chi' \,\big|\, \cE(\chi')\leq E+E'' \bigr\} \times \bigl\{ \chi'' \,\big|\, \cE(\chi'')\leq E+E' \bigr\}  .
$$
In the third step we include a sum over the finite set $\Mor_\sC(L^s,L^{s+t})$ in the Novikov counts $\#_\Lambda$, and the fourth step uses the energy additivity $\cE(\varphi^{s,t}_{\cL}(\chi',\chi''))=\cE(\chi')+\cE(\chi'')=\cE(\chi',\chi'')$.
The last two steps also use the bijection \eqref{eq:bijection} and its compatibility with the maps in 
\eqref{eq:compatibility-with-alpha-beta}.
Thus the $A_\infty$-relations \eqref{eq:A_infty-equations} are equivalent to
$ \#_\Lambda  \; \partial \bigl(\cX(\cL)_1\cap (\alpha^1_{\cL},\ldots,\alpha^n_{\cL},\beta_{\cL})^{-1}\{(p_1,\ldots,p_n,q)\} \bigr) = 0$.
This identity holds by Lemma~\ref{lem:system of boundary faces induces identity} applied to the system of boundary faces in \eqref{eq:bijection} as follows: 
The target spaces of the $\cC^0$-maps $\alpha^k_\cL$ and $\beta_\cL$ in \eqref{eq:a-b-maps} are discrete.
Hence each space $\cX(\cL)_1\cap (\alpha^1_{\cL},\ldots,\alpha^n_{\cL},\beta_{\cL})^{-1}\{(p_1,\ldots,p_n,q)\}$
is a union of connected components of the $1$-dimensional $\cC^0$-manifold $\cX(\cL)_1$.
It has a system of boundary faces given by restricting the bijection \eqref{eq:compatibility-with-alpha-beta} to the subset of pairs in the fiber product
$(\chi',\chi'')\in \cX(\cL')_0\leftindex_{\alpha^{s+1}}\times_\beta \cX(\cL'')_0$ which map to $p_1,\ldots,p_n,q$ by the evaluation maps as specified above.
Thus Lemma~\ref{lem:system of boundary faces induces identity} applies, and the Novikov identity \eqref{eq:boundary Novikov identity} amounts to
$$
\sum_{s+t\leq n}
\#_\Lambda  \left\{  (\chi',\chi'') 
\,\left| \;
\begin{aligned}
&  (\chi',\chi'')\in \cX(\cL')_0\leftindex_{\alpha^{s+1}}\times_\beta \cX(\cL'')_0  \\
& \alpha^1_{\cL'}(\chi')=p_1,\ldots, \beta_{\cL'}(\chi')=q 
\end{aligned}
\right.
\right\}
\: = \: 0 , 
$$
which proves the $A_\infty$-relations \eqref{eq:A_infty-equations}.
(The final two steps in the long computation above spell out the steps of the proof of Lemma~\ref{lem:system of boundary faces induces identity} in this setting.) 
\end{proof}

\begin{remark} \label{rmk:characteristic and regularization}
\begin{enumerate}
\item[(i)]
Lemma~\ref{lem:system of boundary faces induces identity} relies on the coefficient ring (or field) $\Lambda$ having characteristic 2.
We could drop this hypothesis at the expense of introducing signs into the $A_\infty$-equations, and would need to assume that the morphism spaces in the $A_\infty$-flow category carry orientations that are taken into account in the counting \eqref{eq:count}
and are compatible with the bijection \eqref{eq:bijection}.
\item[(ii)]
When constructing algebraic structures by counting pseudoholomorphic objects, a key step is to regularize the relevant moduli spaces.
This step is invisible in the proof of Proposition~\ref{prop:A-infinity_flow_to_linear}.
Indeed, when constructing e.g.\ a Fukaya $A_\infty$-category, the regularization step is part of constructing a regularized $A_\infty$-flow category in an appropriate regularization framework.
Once this is done, extracting a linear $A_\infty$-category is essentially a formal procedure.
\null\hfill$\triangle$
\end{enumerate}
\end{remark}

\subsection{From $(A_\infty,2)$-flow categories to linear $(A_\infty,2)$-categories}
\label{ssec:2-extract}

As discussed in the introduction, the reason that defining a linear version of $(A_\infty,2)$-categories is difficult is that the boundary faces of 2-associahedra involve fiber products of 2-associahedra.
This is in contrast to the situation with $A_\infty$-categories: 
The relatively straight-forward notion of linear $A_\infty$-categories arises naturally from the fact that
the boundary faces of associahedra are Cartesian products of associahedra.
In this subsection, we develop an alternative definition of a linear $(A_\infty,2)$-category.
The key idea is to replace ``2-associahedra'' with ``fiber products of 2-associahedra'' 
and associate separate operations to each fiber product of 2-associahedra.
Then we expect these operations to satisfy an $(A_\infty,2)$-equation that mirrors the combinatorics of the boundary decomposition maps of an $(A_\infty,2)$-flow category summarized in Remark~\ref{rmk:2-a-b-maps}.
Here we again avoid specifying signs by working with a coefficient ring (or field) $\Lambda$ of characteristic 2.

\begin{definition} \label{def:A_infty-2-category}
A \emph{$\Lambda$-linear $(A_\infty,2)$ category $2\sC$} consists of:
\begin{itemize}
\item
A category $(\Ob, \leftindex^1\Mor)=(\Ob_{2\sC}, \leftindex^1\Mor_{2\sC})$.

\smallskip

\item
For every $M_0, M_1 \in \Ob$ and $L_{01}, L_{01}' \in \leftindex^1\Mor(M_0,M_1)$, a $\Lambda$-module (or vector space over~$\Lambda$) $\leftindex^2\Mor_{2\sC}(L_{01},L_{01}')$.

\smallskip

\item
For any choice of integers $r \geq 1, a \geq 1$, tuples of integers 
$\underline\bn={\left( \small \begin{array}{c} \bn^1\\ \vdots \\ \bn^a \end{array} \right)} \in \bigl(\bZ_{\geq0}^r\bigr)^a$, 
and collections of objects $M_0,\ldots,M_r\in\Ob$ and 1-morphisms
\begin{align}  \label{eq:2-collection}
\cL & = 
\left(\begin{array}{c}
 \cL^1 \coloneqq  \left(  L_{(i-1)i}^{1,k} \right)_{ 1\leq i\leq r , 0\leq k\leq n_i^1} 
\\ \vdots \\ 
 \cL^a \coloneqq  \left( L_{(i-1)i}^{a,k} \right)_{ 1\leq i\leq r , 0\leq k\leq n_i^a}
\end{array}\right)
\: = \: \Bigl(L_{(i-1)i}^{j,k} \in  \leftindex^1\Mor(M_{i-1},M_i) \Bigr)_{(i,j,k)\in I^{\, r,a}_{\underline\bn} }  \\
& \qquad\qquad\qquad \text{indexed by}  \qquad
 I^{\, r,a}_{\underline\bn}  \coloneqq  \{ (i,j,k) \,|\,  1\leq i\leq r , 1\leq j\leq a, 0\leq k\leq n_i^j  \} 
\nonumber
\end{align}
a $\Lambda$-linear map
\begin{align} \label{eq:2-mu-r}
\bmu^{r,a}_{\underline\bn} \: : \: \bigotimes_{(i,j,k)\in {'\!I^{\, r,a}_{\underline\bn}}}
\leftindex^2\Mor(L_{(i-1)i}^{j,k-1},L_{(i-1)i}^{j,k})
\:\to \:
\bigotimes_{1\leq j \leq a} 
\leftindex^2\Mor(L_{01}^{j,0}\circ\cdots  L_{(r-1)r}^{j,0},L_{01}^{j,n_1^j}\circ\cdots L_{(r-1)r}^{j,n_r^j}).
\end{align}
Here pairs of consecutive 1-morphisms are indexed by 
$$
'\!I^{\, r,a}_{\underline\bn}  \coloneqq  \{ (i,j,k) \,|\,  1\leq i\leq r , 1\leq j\leq a, 1\leq k\leq n_i^j  \}.
$$
\end{itemize}

\noindent
We require that these $\Lambda$-linear maps satisfy the following \emph{$(A_\infty,2)$-equations}: 
For any $r \geq 1, a\geq 1$, $\underline\bn\in \bigl(\bZ_{\geq0}^r\bigr)^a$, 
collections of objects and 1-morphisms $\cL$ as in \eqref{eq:2-collection}, 
and every tensor product of 2-morphisms $x_i^{j,k} \in \leftindex^2\Mor\bigl(L_{(i-1)i}^{j,k-1},L_{(i-1)i}^{j,k}\bigr)$
\begin{align}
\label{eq:tuple of 2-morphisms}
\bX \, 
\: = \:
\bigotimes
\left(\begin{array}{c}
 \bX^1 \coloneqq \bigotimes_{ 1\leq i\leq r , 1\leq k\leq n_i^1} \; x_{i}^{1,k}  
\\ \vdots \\ 
 \bX^a \coloneqq \bigotimes_{ 1\leq i\leq r , 1\leq k\leq n_i^a} \;  x_{i}^{a,k} 
\end{array}\right)
\: = \:
\bigotimes_{(i,j,k)\in {'\!I^{\, r,a}_{\underline\bn}}}  x_i^{j,k} 
\end{align}
utilizing the notation of Remark~\ref{rmk:2-arrangements and notation}, 
we require that the corresponding \emph{$(A_\infty,2)$-equation} is satisfied: 
\begin{align}
\nonumber
0\: = \:
&
\sum_{*_1} \bmu_{\underline\bn_{*_1}}^{r,a}
\bigotimes \left(
\begin{array}{c}
\bX^{[1,j-1]}  \\
\left( \bX^j_{[1,i-1]} \otimes \bigotimes 
\left(
\begin{array}{c}
 \bX_i^{j,[1,s]} \\
\bmu_{(t)}^{1,1}
\left(\bX_i^{j,[s+1,s+t]} \right) \\
\bX_i^{j,[s+t+1,n_i^j]} \\
\end{array}
\right) 
\otimes \bX^j_{[i+1,r]} \right) \\
\bX^{[j+1,a]}  
\end{array}\right)
\\
&
+\sum_{*_2}
\bmu_{\underline\bn_{*_2} }^{r-t+1,a}
\left(
\bX_{[1,s]} \otimes 
\bmu_{\underline\bm_{*_2}}^{t,b^1+\ldots b^a}
\bigl(\bX_{[s+1,s+t]} \bigr)
\otimes \bX_{[s+t+1,r]} 
\right)
\label{eq:A_infty-2-equations}
\: + \: 
\sum_{*_3} \left(
\begin{array}{c} \tiny
\left.
\begin{array}{c} 
\vdots \\
\id   
\end{array}
\right\} \scriptstyle  j-1  \\
\bmu^{1,1}_{(b)} \phantom{\int_{bot}^{top}}   \\ 
\tiny 
\scriptstyle \left.
\begin{array}{c} 
\id  \\ 
\vdots
\end{array}
\right\}  \scriptstyle a-j \\
\end{array}
\right)
\left( 
\bmu^{r,a+b-1}_{\underline\bn_{*_3}} 
\left(
\bX
\right)
\right) 
\end{align}

Here the symbols $*_1, *_2, *_3$ indicate that we sum over the following:
\begin{equation} \label{eq:123}
\begin{array}{l}
*_1: \: \text{integers} \; 
1\leq i \leq r, 
1 \leq j \leq a, \, \text{and}\; 
s,t\geq 0 \:
\, \text{so that}\; s+t \leq n_i^j ;  \\
*_2: \: 
\text{in case} \; r\geq 3 \;  \text{we sum over integers} \; 
s\geq 0 \; \text{and}\;  2\leq t \leq r-1 \; \text{with}\; s+t\leq r, \\
\qquad\qquad 
\;\text{and for each}\; 1\leq j \leq a \; \text{partitions} \\
\qquad\qquad 
(n^j_{s+1},\ldots,n^j_{s+t})  
= \bm^{j,1} + \ldots + \bm^{j,b^j}  \;\text{into}\;  \bm^{j,1},\ldots,\bm^{j,b^j}  \in
\bZ_{\geq0}^t
\;\text{for some}\;  b^j\geq 0 ; 
\\
*_3: \: 
\text{in case} \; r\geq 2 \;  \text{we sum over integers} \; 1 \leq j \leq a  \; \text{and partitions} \\ 
\qquad\qquad \bn^j = \bm^1+\cdots+\bm^b \;\text{into}\; \bm^1,\ldots,\bm^b \in \bZ_{\geq0}^r \; \text{for some} \; b\geq 0 .
\end{array}
\end{equation}
Moreover, we denote
\begin{equation}  \label{eq:n13} \tiny  
\underline\bn_{*_1} \: \coloneqq \: 
\left( \begin{array}{c} 
\vdots \\  \bn^{j-1} \\ (\ldots,n_{i-1}^j, n_i^j-t+1,n_{i+1}^j,\ldots) \\ \bn^{j+1} \\ \vdots 
\end{array} \right), 
\qquad
\underline\bn_{*_3} \: \coloneqq \: 
\left( \begin{array}{c} 
\vdots \\ \bn^{j-1} \\ \bm^1 \\ \vdots \\ \bm^b \\ \bn^{j+1} \\ \vdots 
\end{array} \right), 
\end{equation}
\begin{equation} \label{eq:nm2} \tiny 
\underline\bn_{*_2} \: \coloneqq \: 
\left( \begin{array}{c} 
(\ldots,n_s^1,b^1,n_{s+t+1}^1,\ldots) \\ \vdots \\ (\ldots,n_s^a,b^a,n_{s+t+1}^a,\ldots)
\end{array} \right), 
\qquad
\underline\bm_{*_2} \: \coloneqq \: 
\left( \begin{array}{c} 
 \bm^{1,1} \\ \vdots \\ \bm^{1,b^1} \\ \vdots \\ \bm^{a,1} \\ \vdots \\ \bm^{a,b^a} 
\end{array} \right).
\end{equation}
\null\hfill$\triangle$
\end{definition}

\begin{remark} \label{rmk:2-arrangements and notation}
The $(A_\infty,2)$-structure in the above definition is -- analogously to the $A_\infty$-structure in Definition~\ref{def:A_infty-category} -- made up of a countable collection of $\Lambda$-linear maps \eqref{eq:2-mu-r} that satisfies a countable collection of $(A_\infty,2)$-equations.
Its higher categorical complexity is better understood by arranging the inputs of each map $\bmu^{r,a}_{\underline\bn}$ in two dimensions: 
\begin{align} 
{\tiny
\begin{array}{c}
\leftindex^2\Mor(L_{01}^{1,0},L_{01}^{1,1}) \\
\otimes \\
\vdots \\
\otimes \\
\leftindex^2\Mor(L_{01}^{1,n_1^1-1},L_{01}^{1,n_1^1}) \\
\otimes \\
\vdots \\
\otimes \\
\leftindex^2\Mor(L_{01}^{a,0},L_{01}^{a,1}) \\
\otimes \\
\vdots \\
\otimes \\
\leftindex^2\Mor(L_{01}^{a,n_1^a-1},L_{01}^{a,n_1^a})
\end{array}
\otimes
\cdots
\otimes
\begin{array}{c}
\leftindex^2\Mor(L_{(r-1)r}^{1,0},L_{(r-1)r}^{1,1}) \\
\otimes \\
\vdots \\
\otimes \\
\leftindex^2\Mor(L_{(r-1)r}^{1,n_r^1-1},L_{(r-1)r}^{1,n_r^1}) \\
\otimes \\
\vdots \\
\otimes \\
\leftindex^2\Mor(L_{(r-1)r}^{a,0},L_{(r-1)r}^{a,1}) \\
\otimes \\
\vdots \\
\otimes \\
\leftindex^2\Mor(L_{(r-1)r}^{a,n_r^a-1},L_{(r-1)r}^{a,n_r^a})
\end{array}
}
\sr{\bmu^{r,a}_{\underline\bn}}{\to}
{\tiny \begin{array}{c}
\leftindex^2\Mor(L_{01}^{1,0}\circ\cdots\circ L_{(r-1)r}^{1,0},L_{01}^{1,n_1^1}\circ\cdots\circ L_{(r-1)r}^{1,n_r^1}) \\
\otimes \\
\vdots \\
\otimes \\
\leftindex^2\Mor(L_{01}^{a,0}\circ\cdots\circ L_{(r-1)r}^{a,0},L_{01}^{a,n_1^a}\circ\cdots\circ L_{(r-1)r}^{a,n_r^a}).
\end{array}}
\end{align}
\noindent
The input for this linear map is a tensor product of 2-morphisms 
$x_i^{j,k} \in \leftindex^2\Mor\bigl(L_{(i-1)i}^{j,k-1},L_{(i-1)i}^{j,k}\bigr)$
which we abbreviate and group into various forms as follows: 
\begin{align}
\bX \, \coloneqq  \;  \bigotimes_{\substack{1\leq i\leq r,1\leq j\leq a \\1\leq k\leq n_i^j}}
x_i^{j,k} 
\; = \; 
\bigotimes
\left(
\begin{array}{c}
\bX^1  \\
\vdots \\
\bX^a  \\
\end{array}\right)
\end{align}
can be viewed as a tensor product of $a$ blocks\footnote{Each block represents the marked points on one of the $a$ quilted spheres in the fiber product.}
\begin{align}
\bX^j
\: \coloneqq  \:
\bigotimes_{1\leq i\leq r, \, 1\leq k\leq n_i^j}  x_i^{j,k} 
\: =  \:
\bigotimes
\left(
\begin{array}{ccc}
x_1^{j,1} & & x_r^{j,1} \\
\vdots & \cdots & \vdots \\
x_1^{j,n_1^j} & & x_r^{j,n_r^j}
\end{array}\right).
\end{align}
To compactify the notation in the $(A_\infty,2)$-equations \eqref{eq:A_infty-2-equations}, we denote the tensor product of blocks $j_1\leq j \leq j_2$ by 
\begin{align}
\bX^{[j_1,j_2]} \, \coloneqq  \; \bigotimes_{\substack{1\leq i\leq r,j_1\leq j\leq j_2 \\1\leq k\leq n_i^j}} 
x_i^{j,k} 
\; = \; 
\bigotimes
\left(
\begin{array}{c}
\bX^{j_1}  \\
\vdots \\
\bX^{j_2}  \\
\end{array}\right).
\end{align}
We can further view each block as the tensor product of column blocks, given for $1\leq j \leq a$ and $1\leq i \leq r$  by 
\begin{align}
\bX^j_i
\, \coloneqq  \;
\bigotimes_{1\leq k\leq n_i^j}  x_i^{j,k} 
\; = \; 
\bigotimes
\left(
\begin{array}{c}
x_i^{j,1}  \\
\vdots  \\
x_i^{j,n_i^j} \\
\end{array} \right)
\end{align}
so that we can view each block and as well as tensor products of column blocks $i_1\leq i \leq i_2$ as
\begin{align}
\bX^j
\:=\: 
\bX^j_1 \otimes \ldots  \otimes \bX^j_r 
\qquad
\text{and}
\qquad
\bX^j_{[i_1,i_2]}
\: \coloneqq \:
\bigotimes_{\substack{i_1\leq i\leq i_2 ,1\leq k\leq n_i^j}}  x_i^{j,k}
\: = \:
\bX^j_{i_1} \otimes \ldots \otimes \bX^j_{i_2} .
\end{align}
Alternatively, the tuple $\bigl(x_i^{j,k}\bigr)_{(i,j,k)\in {'\!I^{\, r,a}_{\underline\bn}}} $ decomposes naturally into $r$ columns\footnote{Each column represents the marked points on one of the $r$ seams.
Thus the columns may have different heights.} given by fixing $1\leq i \leq r$.
We denote their tensor products by 
\begin{align}
\bX_i
\, \coloneqq  \;
\bigotimes_{1\leq j\leq a, \, 1\leq k\leq n_i^j}  x_i^{j,k} 
\; = \; 
\bigotimes
\left(
\begin{array}{c}
\bX^1_i \\
 \vdots  \\
\bX^a_i 
\end{array}\right), 
\end{align}
so that we can express the full tensor product and the tensor product of columns $i_1\leq i \leq i_2$ as
\begin{equation}
\bX = \bX_1 \otimes \ldots \otimes\bX_r  , 
\qquad
\text{and}
\qquad
\bX_{[i_1,i_2]}
\: \coloneqq \:
\bX_{i_1} \otimes \ldots \otimes \bX_{i_2} 
\: = \:
\bigotimes_{\substack{i_1\leq i\leq i_2 , 1\leq j \leq a \\ 1\leq k\leq n_i^j}}
x_i^{j,k}  .
\end{equation}
Finally, we denote tensor products of 2-morphisms for $k_1\leq k \leq k_2$ within a column block by
\begin{align}
\bX_i^{j,[k_1,k_2]}
\, & \coloneqq  \;
\bigotimes_{k_1\leq k\leq k_2 } x_i^{j,k}
\; = \; 
\bigotimes
\left(
\begin{array}{c}
x^{j,k_1}_i \\
 \vdots  \\
x^{j,k_2}_i
\end{array}\right)  .
\nonumber
\end{align}
With this notation in place, the partitions of integer tuples in \eqref{eq:n13} and \eqref{eq:nm2} uniquely determine the partitions of the inputs in the following experessions in \eqref{eq:A_infty-2-equations}:  
\begin{align}
\bmu^{r,a+b-1}_{\underline\bn_{*_3}} 
\left( \bX \right)
\:=\: 
\bmu^{r,a+b-1}_{\underline\bn_{*_3}} 
\bigotimes
\left(
\begin{array}{c}
\bX^{[1,j-1]}  \\
\bX_1^{j,[1,m^{j,1}_1]} \otimes\ldots\otimes \bX_r^{j,[1,m^{j,1}_r]}  \\
\bX_1^{j,[m^{j,1}_1+1,m^{j,2}_1]} \otimes\ldots\otimes\bX_r^{j,[m^{j,1}_r+1,m^{j,2}_r]} \\
\vdots \\
\bX_1^{r,[m^{r,1}_1+\ldots m^{r,b-1}_1 +1 , n^j_1]} \otimes\ldots\otimes \bX_r^{j,[m^{j,1}_r+\ldots m^{1,b-1}_r +1,n^j_r]}  \\
\bX^{[j+1,a]} 
\end{array}
\right) , 
\end{align}
and similarly -- with a partition in each block -- 
\begin{align}
\bmu_{\underline\bm_{*_2}}^{t,b^1+\ldots b^a}
\bigl(\bX_{[s+1,s+t]} \bigr)
\:=\: 
\bmu_{\underline\bm_{*_2}}^{t,b^1+\ldots b^a}
\bigotimes
\left(
\begin{array}{c}
\bX_{s+1}^{1,[1,m^{1,1}_{s+1}]} \otimes\ldots\otimes \bX_{s+t}^{1,[1,m^{1,1}_{s+t}]}  \\
\vdots \\
\bX_{s+1}^{1,[m^{1,1}_{s+1}+\ldots m^{1,b^1-1}_{s+1} +1,n^1_{s+1}]} \otimes\ldots\otimes\bX_{s+t}^{1,[m^{1,1}_{s+t}+\ldots m^{1,b^1-1}_{s+t} +1,n^1_{s+t}]} \\
\vdots \\
\bX_{s+1}^{a,[1,m^{a,1}_{s+1}]} \otimes\ldots\otimes \bX_{s+t}^{a,[1,m^{a,1}_{s+t}]}  \\
\vdots \\
\bX_{s+1}^{a,[m^{a,1}_{s+1}+\ldots m^{1,b^a-1}_{s+1} +1,n^a_{s+1}]} \otimes\ldots\otimes\bX_{s+t}^{a,[m^{a,1}_{s+t}+\ldots m^{1,b^a-1}_{s+t} +1,n^a_{s+t}]} 
\end{array}
\right).
\end{align}
\null\hfill$\triangle$
\end{remark}

Next we explain the indexing conventions \eqref{eq:123} for the $(A_\infty,2)$-equations by the geometry and combinatorics of the 2-associahedra.

\begin{remark}
The three sums in the $(A_\infty,2)$-equation \eqref{eq:A_infty-2-equations} arise from the three types of codimension-1 degenerations in fiber products of 2-associahedra as in Remark~\ref{rmk:three_types_of_comp_maps} and depicted in Figure~\ref{fig:degens of fiber product}: 
\begin{enumerate}
\item
The first sum in \eqref{eq:A_infty-2-equations} corresponds to a type-1 move in which 
a consecutive sequence of $t$ points (the $(s+1)$-st through the $(s+t)$-th) on the $i$-th seam of the $j$-th witch ball collide.

\medskip

\item
The second sum corresponds to a type-2 move in which a consecutive sequence of $t$ seams (the 
$(s+1)$-st through $(s+t)$-th) on each witch ball collide.
On the $j$-th witch ball, $b^j\geq0$ bubbles form on the resulting fused seam.
These bubbles each carry $t$ seams, and the vector $\bm^{j,\ell}$ records the number of marked points on the $\ell$-th bubble.
The inequalities $2\leq t$ and $t\leq r-1$
correspond to the requirements that at least two seams but not not all of the seams collide.

\medskip

\item
The third sum corresponds to a type-3 move in which all the seams on the $j$-th witch ball collide.
On this witch ball, $b\geq0$ bubbles form on the fused seam.
These bubbles carry $r$ seams, and the vector $\bm^\ell$ records the number of marked points on the $\ell$-th bubble.
\null\hfill$\triangle$
\end{enumerate}
\end{remark}

As in Lemma~\ref{lem:mor is cat} we note that the morphisms $\Mor(M_0,M_1)$ between two fixed objects of a linear $(A_\infty,2)$-category have the structure of a linear $A_\infty$-category.

\begin{lemma} \label{lem:mor is A-infty cat}
Let $2\sC$ be a linear $(A_\infty,2)$-category as in Definition~\ref{def:general A_infty-2-flow_category}.
Then for any two objects $M_0, M_1 \in \Ob$ the structure of $2\sC$ restricts to a  
linear $A_\infty$-category $\Mor(M_0,M_1)$ as in Definition~\ref{def:general_A_infty-flow_category} as follows:  
\begin{itemize}
\item
Objects are given by the 1-morphisms $\Ob_{\Mor(M_0,M_1)} \coloneqq \leftindex^1\Mor_{2\sC}(M_0,M_1)$.

\smallskip

\item
$\Mor(L, L') \coloneqq \Mor_{2\sC}(L, L')$ forms a $\Lambda$-module (or vector space over $\Lambda$) for each pair of objects $L, L' \in \Ob_{\Mor(M_0,M_1)}$.

\smallskip

\item
For every $n \geq 0$ and $L^0, \ldots, L^n \in \Ob_{\Mor(M_0,M_1)}$, a $\Lambda$-linear map 
\begin{align} 
\mu_n  \coloneqq \bmu^{1,1}_{(n)}  \:: \: \bigotimes_{1\leq k \leq n} 
\leftindex^2\Mor(L^{k-1}=L_{01}^{1,k-1},L_{01}^{1,k}=L^k)  \;\to\; \Mor(L^0=L_{01}^{1,0},L_{01}^{1,n}=L^n)
\end{align}
 is given by choosing $r=1$, $a=1$, $\bn^1=(n)$ in \eqref{eq:2-mu-r}.
\end{itemize}

\end{lemma}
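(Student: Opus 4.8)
The plan is to specialize the $(A_\infty,2)$-equations \eqref{eq:A_infty-2-equations} to the case $r=1$, $a=1$ and to verify that, for the operations $\mu_n\coloneqq\bmu^{1,1}_{(n)}$, they reduce exactly to the $A_\infty$-equations \eqref{eq:A_infty-equations}. First I would check that the data match. For $r=1$, $a=1$, $\bn^1=(n)$ the index set ${'\!I^{1,1}_{(n)}}$ collapses to $\{(1,1,k)\mid 1\leq k\leq n\}$, so the source of $\bmu^{1,1}_{(n)}$ in \eqref{eq:2-mu-r} is $\bigotimes_{1\leq k\leq n}\leftindex^2\Mor(L^{k-1},L^{k})$ with $L^{k}\coloneqq L_{01}^{1,k}$, and the target is the single factor $\leftindex^2\Mor(L^{0},L^{n})$, since for $r=1$ each composite $L_{01}^{j,0}\circ\cdots\circ L_{(r-1)r}^{j,0}$ involves only the one seam. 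This is precisely the source and target of $\mu_n$ required by Definition~\ref{def:A_infty-category}, and in particular $\mu_0=\bmu^{1,1}_{(0)}\in\leftindex^2\Mor(L,L)$ supplies the curvature term.

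Next I would observe that for $r=1$ two of the three sums in \eqref{eq:A_infty-2-equations} are empty: by the index conventions \eqref{eq:123} the sum $*_2$ requires $2\leq t\leq r-1$ and hence $r\geq 3$, while the sum $*_3$ requires $r\geq 2$. Thus for $r=1$ only the type-1 sum $*_1$ survives. This is the algebraic counterpart of the observation in the flow-categorical Lemma~\ref{lem:mor is cat}(iii) that type-2 and type-3 boundary decomposition maps do not occur when $r=1$.

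The main step is to unwind the surviving $*_1$ sum and match it term by term against \eqref{eq:A_infty-equations}. For $r=1$, $a=1$ the summation indices reduce to $i=1$, $j=1$ and $s,t\geq 0$ with $s+t\leq n$, and the two-dimensional arrangement of Remark~\ref{rmk:2-arrangements and notation} degenerates to the single column $\bX=x_1\otimes\cdots\otimes x_n$ upon writing $x_k\coloneqq x_1^{1,k}$. The inner factor $\bmu^{1,1}_{(t)}\bigl(\bX_1^{1,[s+1,s+t]}\bigr)$ then becomes $\mu_t(x_{s+1}\otimes\cdots\otimes x_{s+t})$, while the outer operation is applied to the $s$ factors $x_1,\ldots,x_s$, this one output, and the $n-s-t$ factors $x_{s+t+1},\ldots,x_n$; counting tensor slots shows it has arity $s+1+(n-s-t)=n-t+1$, so it is $\mu_{n-t+1}$. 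Summing over $s+t\leq n$ reproduces \eqref{eq:A_infty-equations}, with the $t=0$ terms yielding the $\mu_0$-insertions. The hard part is entirely this notational bookkeeping: one must confirm that the heavily indexed arrangement of \eqref{eq:A_infty-2-equations} genuinely collapses to the linear tensor product and, in particular, that the arity of the outer composition is $n-t+1$ (from the count just given) and not $n-t$. No geometric input beyond the identification $W_{(n)}=K_n$ already used in Lemma~\ref{lem:mor is cat} is needed, since at the linear level the $(A_\infty,2)$-equation is a purely algebraic identity among the maps $\bmu^{r,a}_{\underline\bn}$.
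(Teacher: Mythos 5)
Your proposal is correct and follows essentially the same route as the paper's proof: specialize the $(A_\infty,2)$-equations to $r=1$, $a=1$, $\bn^1=(n)$, observe that the $*_2$ and $*_3$ sums are vacuous for $r=1$ (they require $r\geq 3$ resp.\ $r\geq 2$), and match the surviving $*_1$ sum term by term with the $A_\infty$-equations. Your arity bookkeeping (inner operation $\mu_t$ applied to $x_{s+1},\ldots,x_{s+t}$, outer operation $\mu_{n-t+1}$) is in fact slightly more careful than the notation in \eqref{eq:A_infty-equations}, where the inner operation is written as $\mu_s$.
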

\begin{proof}
The structures are identified in the statement, so it remains to verify the $A_\infty$-equations \eqref{eq:A_infty-equations} for any choice of $\bigl( x_k\in \Mor(L^{k-1},L^k)\bigr)_{1\leq k\leq n}$.
These will follow from the $(A_\infty,2)$-equations \eqref{eq:A_infty-2-equations} for $r=1$, $a=1$, $\bn^1=(n)$, which reduces the indices in \eqref{eq:123} to case $*_1$ with $i=1$, $j=1$, and $s,t\geq 0$ with $s+t \leq n$.
Thus only the first sum is present in this $(A_\infty,2)$-equation \eqref{eq:A_infty-2-equations}, which confirms 
\begin{align*}
\sum_{s+t\leq n}
\mu_{n-t+1}(\ldots x_s\smotimes \mu_s(x_{s+1}\smotimes \ldots x_{s+t})\smotimes x_{s+t+1} \ldots ) 
 \: = \sum_{s+t\leq n}
\bmu^{1,1}_{(n-t+1)}\left(   
 \bigotimes
\left(
\begin{array}{c}
\vdots  \\
 x_s \\
\bmu_{(t)}^{1,1}
\left(\begin{array}{c}x_{s+1} \\ \vdots \\ x_{s+t}
\end{array}\right) \\
x_{s+t+1} \\
\vdots \\
\end{array}
\right) 
\right)
= 0 .
\end{align*}
\end{proof}

This $A_\infty$-structure on the morphisms results from the $\Lambda$-linear maps $\bmu^{r,a}_{(n)}$ for $r=1$ and $a=1$.
To describe the expected algebraic meaning of these maps for $r\geq 2$ or $a\geq 2$ we need the linear algebraic analog of the notion of compatibility with fiber products in Definition~\ref{def:compatible} for flow categories.
Here the algebraic compatibility notion is restricted to $r\leq 2$ as the fiber products for $r\geq 3$ are nontrivial, so that no direct algebraic relationship between the associated $\Lambda$-linear maps can be expected.

\begin{definition}\label{def:linear compatible}
For $r_c \in \{1,2\}$, a linear $(A_\infty,2)$-category $2\sC$ as in Definition~\ref{def:A_infty-2-flow_category} is called \emph{compatible with fiber products for $r = r_c$} if for $a \geq 2$, $\underline\bn=(\bn^1, \ldots, \bn^a) \in \bigl(\bZ_{\geq0}^{r_c}\bigr)^a$, and collections of objects $M_0,\ldots,M_{r_c}\in\Ob$ and 1-morphisms as in \eqref{eq:curly-L}, 
\begin{equation} 
\cL 
\: = \:
\left(\begin{array}{c}
\cL^1 \coloneqq \left( L_{(i-1)i}^{1,k} \right)_{ 1\leq i\leq r_c , 0\leq k\leq n_i^1} 
\\ \vdots \\ 
\cL^a \coloneqq  \left( L_{(i-1)i}^{a,k} \right)_{ 1\leq i\leq r_c , 0\leq k\leq n_i^a}
\end{array}\right),
\end{equation}
the associated $\Lambda$-linear maps $\bmu^{r_c,a}_{\underline\bn}$ are tensor products of the corresponding $\Lambda$-linear maps for $a=1$, 
\begin{equation}
\bmu^{r_c,a}_{\underline\bn} 
\left( \bX = 
\bigotimes \left(
\begin{array}{c}
\bX^1  \\
\vdots \\
\bX^a  \\
\end{array}
\right)
\right)
\: =  \: 
\bigotimes
\left(
\begin{array}{c}
\bmu^{r_c,1}_{\bn^1}\bigl( \bX^1\bigr)  \\
\vdots \\
\bmu^{r_c,1}_{\bn^a}\bigl( \bX^a \bigr)  \\
\end{array}
\right).
\end{equation} 

\vspace{-8mm}
\null\hfill$\triangle$
\end{definition}

\begin{remark} \label{rmk:compatible}
The compatibility of Definition~\ref{def:linear compatible} for $r=1$ means that each $\Lambda$-linear map $\bmu^{1,a}_{\underline\bn}$ for $a\geq 2$ can be interpreted as the $a$-fold tensor product of $a$-many $A_\infty$-composition operations, where each of these composition operations is defined as in Lemma~\ref{lem:mor is A-infty cat}.
Assuming compatibility for $r=2$, the maps for $r=2,a=1$ induce a lift of the composition of morphisms to a curved $A_\infty$-bifunctor 
$\bigl( \Mor(M_0,M_1) , \Mor (M_1,M_2) \bigr) \to \Mor (M_0,M_2)$,  
as was conjectured for the symplectic applications in \cite[Conjecture~4.11]{bottman_wehrheim}.\footnote{
Indeed, the $(A_\infty,2)$-equations for $r=2$ in \eqref{eq:A_infty-2-equations} simplify as the second sum contributes no terms.
The first resp.\ third sum then correspond to the left- resp.\ right-hand-sides in \cite[Equation (40)]{bottman_wehrheim}.
} 
In the absence of curvature terms -- which are discussed below -- this associates to every 1-morphism $L_{12}\in\Mor (M_1,M_2)$ an $A_\infty$-functor $\Phi_{L_{01}}: \Mor(M_0,M_1) \to \Mor(M_0,M_2)$.

The relation between the composition of these functors $\Phi_{L_{12}}\circ \Phi_{L_{23}}$ and the functor associated to the composition $\Phi_{L_{12}\circ L_{23}}$ is then captured by the $(A_\infty,2)$-equations for $r=3$.
These provide a generalized homotopy\footnote{
The $(A_\infty,2)$-operations for $r=3$ do not provide an $A_\infty$-homotopy in the typical sense of \cite[\S1h]{seidel_picard-lefschetz} since the $(A_\infty,2)$-equations \eqref{eq:A_infty-2-equations} for $r=3$ have the third summand, indexed by $*_3$, whereas the $A_\infty$-homotopy equations have a different sum, namely the first sum on the right-hand side of \cite[(1.8)]{seidel_picard-lefschetz}.
However, the ``generalized $A_\infty$-homotopy'' here is homotopic to the usual definition by \cite{b:homotopies}
because the 2-associahedra with $r=3$ can be augmented with a union of new cells such that the augmented 2-associahedra exactly encode the $A_\infty$-homotopy equations.
} between the $A_\infty$-functors given by the maps $\bmu^{3,a}_{\underline\bn}$ -- and up to new curvature terms.
The algebraic structure of the $\bmu^{r,a}_{\underline\bn}$ for $r\geq4$ can then be viewed as iterated algebraic homotopies -- each up to yet new curvature terms, which we discuss in Remark~\ref{rmk:curvature}.
\end{remark}

In the symplectic application -- once the moduli spaces are constructed and regularized -- this realizes Weinstein's vision of a symplectic category \cite{weinstein_symplectic_category} and extends its Floer-theoretic functoriality properties from the Ma'u-Wehrheim-Woodward constructions \cite{mww} -- which were somewhat artificial and limited to monotone symplectic manifolds -- to a natural structure including all compact (or geometrically bounded) symplectic manifolds.
However, the algebraic structure of this theory involves an infinite hierarchy of new algebraic curvature terms as follows.

\begin{remark} \label{rmk:curvature}
As in the $A_\infty$-case of Definition~\ref{def:A_infty-category} we include unstable configurations of witch curves in Definition~\ref{def:A_infty-2-category}, as these arise in symplectic applications from energy concentration in pseudoholomorphic quilts without (or in addition to) degeneration of the underlying witch curve, and have important algebraic consequences.

\begin{enumerate}
\item[(i)]
The unique unstable configuration with $r=1$, $a=1$ and $\bn^1=(1)$ is a witch curve that -- after removing the incoming and outgoing marked point -- is biholomorphic to a cylinder with two seams dividing the cylinder into strips of equal width.
In symplectic applications, the corresponding pseudoholomorphic quilts can be combined into a single strip mapping to a product of symplectic manifolds with Lagrangian boundary conditions.
These moduli spaces give rise to the differential in Lagrangian Floer theory developed in \cite{floer_lag_int}.

In general, the unique unstable configuration in the trivial (2-)associahedron $W_{(1)}=K_1=\{\pt\}$ gives rise to a $\Lambda$-linear map $\mu_1: \Mor(L^0,L^1)\to  \Mor(L^0,L^1)$ in Definition~\ref{def:A_infty-category} and a $\Lambda$-linear map $\bmu^{1,1}_{(1)}:\leftindex^2\Mor(L^0_{01},L^1_{01})\to \leftindex^2\Mor(L^0_{01},L^1_{01})$ in Definition~\ref{def:A_infty-2-category}.
In the absence of algebraic curvature -- see (ii) -- this can be understood as giving each morphism space $\Mor(L^0,L^1)$ resp.\ each 2-morphism space $\leftindex^2\Mor(L^0_{01},L^1_{01})$ the structure of a chain complex.

\item[(ii)]
The unique unstable configuration with $r=1$, $a=1$ and $\bn^1=(0)$ is a witch curve with a single seam and no incoming marked points.
In symplectic applications, the corresponding pseudoholomorphic quilts can be combined into a single disk mapping to a product of symplectic manifolds with Lagrangian boundary conditions and only an outgoing marked point.
These moduli spaces give rise to the algebraic curvature in the $A_\infty$-algebra of Lagrangian Floer theory developed in \cite{fooo_12}.

In general, the unique unstable configuration in the trivial (2-)associahedron $W_{(0)}=K_0=\{\pt\}$ gives rise to algebraic curvature of the ``chain complexes'' in (i).
In Definition~\ref{def:A_infty-category} it appears as $\mu_0 \in \Mor(L^0,L^0)$ for each object $L^0$, which satisfies the zero-th $A_\infty$-equation $\mu_1(\mu_0)=0$.
The first $A_\infty$-equation 
$\mu_1(\mu_1(x)) + \mu_2(\mu_0\smotimes x) + \mu_2(x\smotimes \mu_0)$, or more precisely\footnote{Here we indicate by superscripts which object each curvature contribution is associated to.}
\begin{equation}
\mu_1(\mu_1(x)) + \mu_2(\mu_0^{L^0}\smotimes x) + \mu_2(x\smotimes \mu_0^{L^1})=0
\qquad\forall x\in \Mor(L^0,L^1) 
\end{equation}
quantifies the failure of differentials $\mu_1$ to square to zero.
In Definition~\ref{def:A_infty-2-category}, this curvature appears in the terms $\mu_0^{L_{01}}\coloneqq\bmu^{1,1}_{(0)}\in\leftindex^2\Mor(L_{01},L_{01})$ for each 1-morphism $L_{01}=L_{01}^{1,0}\in\leftindex^1\Mor(M_0,M_1)$ and satisfies the analogous relations with the differential $\bmu^{1,1}_{(1)}$ from (i).

\item[(iii)]
For $r=1$ and $a\geq 2$ the fiber products of (2-)associahedra of unstable configurations are Cartesian products of the (2-)associahedra, so that it makes sense to impose the compatibility assumption of Definition~\ref{def:linear compatible}.
The resulting algebraic contributions from unstable configurations are then tensor products of the above curvature terms, 

\begin{equation}
\bmu^{1,a}_{\tiny \left( \substack{(0) \\  \vdots  \\  (0)  } \right)} 
\: = \: 
\left( \begin{array}{c}
\bmu^{1,1}_{\tiny (0)} = \mu_0^{L^1_{01}}  \\
\otimes \\
\vdots \\
\bmu^{1,1}_{\tiny (0)} = \mu_0^{L^a_{01}}  .
\end{array} \right)
\in 
 \begin{array}{c}
\leftindex^2\Mor(L^1_{01} , L^1_{01} ) \\
\otimes \\
\vdots \\
\leftindex^2\Mor(L^a_{01},L^a_{01})  .
\end{array}
\end{equation}
However, the curvature also appears in mixed terms
\begin{equation}
\bmu^{1,2}_{\tiny \left( \substack{(1) \\  (0) } \right)} =
\otimes \left( \begin{array}{c}
\bmu^{1,1}_{(1)} = \mu_1 \\ 
\bmu^{1,1}_{(0)} = \mu_0 
\end{array} \right) , 
\quad
\bmu^{1,2}_{\tiny \left( \substack{(0) \\  (1) } \right)} =
\otimes \left( \begin{array}{c}
\bmu^{1,1}_{(0)} = \mu_0  \\ 
\bmu^{1,1}_{(1)}= \mu_1
\end{array} \right) , 
\quad
\bmu^{1,3}_{\tiny \left( \substack{(1) \\  (0)  \\  (0)  } \right)} =
\otimes \left( \begin{array}{c}
\bmu^{1,1}_{(1)} = \mu_1 \\ 
\bmu^{1,1}_{(0)} = \mu_0  \\ 
\bmu^{1,1}_{(0)} = \mu_0 
\end{array} \right)  ,  \quad \ldots 
\end{equation}
\end{enumerate}

New algebraic obstruction terms in the $(A_\infty,2)$-algebra that go beyond the differential and curvature terms in the $A_\infty$-algebra arise from the unstable configurations with $r\geq 2$ and $\bn^j=(0,\ldots,0)$, indicating no incoming marked points:
\begin{enumerate}
\item[(iv)]
For $r=2$, $a=1$ the unique underlying domain curve is the figure eight configuration introduced in \cite[Figure~2]{wehrheim_woodward_geometric_composition} -- two circular seams on the sphere that intersect tangentially at the outgoing marked point.
In symplectic applications, the corresponding pseudoholomorphic quilts arise as bubbles during strip-shrinking as described in \cite[Theorem~3.5]{bottman_wehrheim}, which do not add to the boundary codimension.
These moduli spaces capture obstructions to the ``categorification commutes with composition'' and ``Floer homology is well-defined under geometric composition'' results proven in \cite{wehrheim_woodward_2010functoriality} and \cite{wehrheim_woodward_geometric_composition} for monotone Lagrangian relations.
However, thanks to their Fredholm description \cite{bottman_figure_eight_singularity}, a regularization of these moduli spaces can now be understood as giving rise to a bounding cochain $\bmu^{2,1}_{(0,0)}\in \leftindex^2\Mor(L_{01}\circ L_{12},L_{01}\circ L_{12})$
which adjusts the Floer differential for composed Lagrangian relations $L_{01}\circ L_{12}$ such that the new adiabatic Fredholm theory for strip-shrinking \cite{bottman_wehrheim:adiabatic} can generalize the above algebraic results to all compact Lagrangian relations as outlined in \cite[\S4.4]{bottman_wehrheim}.

In general, the unique unstable configuration in the trivial 2-associahedron $W_{(0,0)}=\{\pt\}$ gives rise to ``figure eight'' terms $\bmu^{2,1}_{(0,0)}\in \leftindex^2\Mor(L_{01}\circ L_{12},L_{01}\circ L_{12})$ in Definition~\ref{def:A_infty-2-category} for each composable pair of 1-morphisms $L_{01}=L_{01}^{1,0}\in\leftindex^1\Mor(M_0,M_1)$, $L_{12}=L_{12}^{1,0}\in\leftindex^1\Mor(M_1,M_2)$.
Assuming compatibility with fiber products for $r=2$ as in Definition~\ref{def:linear compatible}, these figure eight terms determine the maps for $r=2$, $a\geq 2$, and $\bn^j=(0,0)$  
\begin{equation}
\bmu^{2,2}_{\tiny \left( \substack{(0,0) \\  (0,0) } \right)} =
\otimes \left( \begin{array}{c}
\bmu^{2,1}_{(0,0)}  \\ 
\bmu^{2,1}_{(0,0)}
\end{array} \right) , 
\qquad
\bmu^{2,3}_{\tiny \left( \substack{(0,0) \\  (0,0)  \\  (0,0)  } \right)} =
\otimes \left( \begin{array}{c}
\bmu^{2,1}_{(0,0)}  \\ 
\bmu^{2,1}_{(0,0)}  \\ 
\bmu^{2,1}_{(0,0)}
\end{array} \right)  ,  \quad \ldots 
\end{equation}
Compatibility also implies that the figure eight terms appear in the linear maps for $r=2$, $a\geq 2$, and $\underline\bn\not\equiv 0$, for example we will use below
\begin{equation}
\bmu^{2,2}_{\tiny \left( \substack{(1,0) \\  (0,0) } \right)} =
\otimes \left( \begin{array}{c}
\bmu^{2,1}_{(1,0)}  \\ 
\bmu^{2,1}_{(0,0)}
\end{array} \right) , 
\quad
\bmu^{2,2}_{\tiny \left( \substack{(0,0) \\  (1,0) } \right)} =
\otimes \left( \begin{array}{c}
\bmu^{2,1}_{(0,0)}  \\ 
\bmu^{2,1}_{(1,0)}
\end{array} \right) , 
\quad
\bmu^{2,3}_{\tiny \left( \substack{(1,0) \\  (0,0)  \\  (0,0)  } \right)} =
\otimes \left( \begin{array}{c}
\bmu^{2,1}_{(1,0)}  \\ 
\bmu^{2,1}_{(0,0)}  \\ 
\bmu^{2,1}_{(0,0)}
\end{array} \right)  ,  \quad \ldots 
\end{equation}
Still assuming compatibility with fiber products for $r=2$, we can now make algebraic sense of the figure eight terms: 
They obstruct -- or add higher curvature to -- the $A_\infty$-bifunctor relations arising from the 
$(A_\infty,2)$-equations \eqref{eq:A_infty-2-equations} for $r=2$, $a=1$.
For example, if we assume vanishing of the disk curvature $\bmu_{(0)}^{1,1}$ in (ii), then the $(A_\infty,2)$-equation for $\underline\bn=\bn^1=\bigl(n^1_1=1, n^1_2=0)$ applied to $\bX=\otimes ( x , \ )$ with $x=x_1^{1,1} \in \leftindex^2\Mor\bigl(L_{01}^{1,0},L_{01}^{1,1}\bigl) $ yields the first $A_\infty$-bifunctor relation -- making $\bmu^{2,1}_{(1,0)}$ a chain map -- up to an infinite sum arising from figure eight curvature terms, 
\begin{align}
0 &= 
 \bmu_{(1,0)}^{2,1}
 \left(
\bmu_{(1)}^{1,1} (x)
\right)
+
\bmu^{1,1}_{(1)}
\left( \bmu^{2,1}_{(1,0)} (x)  \right) 
+
\bmu^{1,1}_{(2)}
\otimes\!\! \left( 
\begin{array}{c}
\bmu^{2,1}_{(1,0) } ( x )  \\
\bmu^{2,1}_{(0,0) }  \\
\end{array}
\right)
+
\bmu^{1,1}_{(2)}
\otimes\!\! \left( 
\begin{array}{c}
\bmu^{2,1}_{(0,0)}   \\
 \bmu^{2,1}_{(1,0)}( x ) \\
\end{array}
\right) 
\nonumber \\
&\quad 
+
\bmu^{1,1}_{(3)}
\otimes\!\! \left( \begin{array}{c}
\bmu^{2,1}_{(1,0)}  ( x )  \\
\bmu^{2,1}_{(0,0)}   \\
 \bmu^{2,1}_{(0,0)}  \\
\end{array} \right) 
+
\bmu^{1,1}_{(3)}
\otimes\!\! \left( \begin{array}{c}
\bmu^{2,1}_{(0,0)}  \\
 \bmu^{2,1}_{(1,0)}  (x )  \\
 \bmu^{2,1}_{(0,0)}  \\
\end{array} \right) 
+
\bmu^{1,1}_{(3)}
\otimes\!\! \left( \begin{array}{c}
\bmu^{2,1}_{(0,0)}  \\
\bmu^{2,1}_{(0,0)} \\
  \bmu^{2,1}_{(1,0)} ( x )  \\
\end{array} \right) 
\nonumber \\
&\quad 
+
\bmu^{1,1}_{(4)}
\otimes\!\! \left( \begin{array}{c}
 \bmu^{2,1}_{(1,0)} (x )  \\
\bmu^{2,1}_{(0,0)}   \\
 \bmu^{2,1}_{(0,0)}  \\
 \bmu^{2,1}_{(0,0)}  \\
\end{array} \right) 
+
\bmu^{1,1}_{(4)}
\otimes\!\! \left( \begin{array}{c}
\bmu^{2,1}_{(0,0)}  \\
 \bmu^{2,1}_{(1,0)} ( x )  \\
\bmu^{2,1}_{(0,0)}   \\
\bmu^{2,1}_{(0,0)}  \\
\end{array} \right) 
+
\bmu^{1,1}_{(4)}
\otimes\!\! \left( \begin{array}{c}
\bmu^{2,1}_{(0,0)}  \\
\bmu^{2,1}_{(0,0)}   \\
 \bmu^{2,1}_{(0,0)} ( x ) \\
\bmu^{2,1}_{(0,0)}  \\
\end{array} \right) 
+
\bmu^{1,1}_{(4)}
\otimes\!\! \left( \begin{array}{c}
\bmu^{2,1}_{(0,0)}   \\
\bmu^{2,1}_{(0,0)}   \\
\bmu^{2,1}_{(0,0)}  \\
 \bmu^{2,1}_{(1,0)}  ( x ) \\
\end{array} \right) 
\nonumber \\
&\quad 
+ \ 
\textstyle \sum_{b\geq 5}
\sum_{\bm^1+ \ldots + \bm^b = (1,0)}  \: 
\bmu^{1,1}_{(b)}
\bigl( \bmu^{2,1}_{\bm^1 \ldots \bm^b} (x)  \bigr) .
\nonumber 
\end{align}
Convergence of this infinite sum follows in symplectic applications from the fact that figure eight bubbles have a positive minimal energy, so that contributions for $b\to\infty$ appear only in the tail of the Novikov coefficients.
For the general algebraic framework see (vi) below.

\item[(v)]
For $r \geq 3, a = 1$ the underlying domain curves are the first new examples of configurations introduced in \cite[Definition 2.4]{bottman_realizations}: $r$ circular seams on the sphere that intersect tangentially at the outgoing marked point.
In symplectic applications, the corresponding pseudoholomorphic quilts are expected to arise by the analysis of \cite{bottman_wehrheim} as bubbles during the simultaneous shrinking of several adjacent seams.
This bubbling phenomenon is also expected not to increase the boundary codimension.
The moduli spaces can be given a Fredholm description by combining \cite{bottman_figure_eight_singularity,bottman_wehrheim:adiabatic}.

In general, the 2-associahedra $W_{(0,\ldots,0)}$ for $r\geq 3$ consist of unstable configurations and can be identified with nontrivial associahedra via the forgetful maps $\pi: W_{(0,\ldots,0)}\to K_r$.
They give rise to terms $\bmu^{r,1}_{(0,\ldots,0)}\in \leftindex^2\Mor(L_{01}\circ L_{12}\circ\cdots\circ L_{(r-1)r},L_{01}\circ L_{12}\circ\cdots\circ L_{(r-1)r})$ in Definition~\ref{def:A_infty-2-category} for each composable chain of 1-morphisms $L_{01}=L_{01}^{1,0}\in\leftindex^1\Mor(M_0,M_1), \ldots, $$L_{(r-1)r}=L_{(r-1)r}^{1,0}\in\leftindex^1\Mor(M_{r-1},M_r)$.
Then a term $\bmu^{r',1}_{(0,\ldots,0)}$ for fixed $r'$ appears as algebraic obstruction / curvature in all $(A_\infty,2)$-equations for $r\geq r'$.

\item[(vi)]
For $a\geq 2$ and $r\geq 3$ the moduli spaces of unstable configurations are fiber products of the moduli spaces for $a=1$ over the nontrivial associahedron $K_r$.
Thus the algebra will generally contain further obstruction terms 
\begin{equation}
\bmu^{r,a}_{\tiny \left( \substack{(0,\ldots,0) \\  \vdots  \\  (0,\ldots,0)  } \right)} 
\in 
 \begin{array}{c}
\leftindex^2\Mor(L^1_{01}\circ L^1_{12}\circ\cdots\circ L^1_{(r-1)r} , L^1_{01}\circ L^1_{12}\circ\cdots\circ L^1_{(r-1)r}) \\
\otimes \\
\vdots \\
\leftindex^2\Mor(L^a_{01}\circ L^a_{12}\circ\cdots\circ L^a_{(r-1)r},L^a_{01}\circ L^a_{12}\circ\cdots\circ L^a_{(r-1)r})  .
\end{array}
\end{equation}
Finally, fiber products of unstable and stable configurations for $r\geq 3$ yield a further infinite hierarchy of obstructed linear maps such as 
\begin{equation}
\bmu^{3,2}_{\tiny \left( \substack{(1,0,0) \\  (0,0,0) } \right)} \: : \: 
\leftindex^2\Mor(L^{1,0}_{01},L^{1,1}_{01})
\quad\longrightarrow\quad
\begin{array}{c}
\leftindex^2\Mor(L^{1,0}_{01}\circ L^{1,0}_{12}\circ L^{1,0}_{23},L^{1,1}_{01}\circ L^{1,0}_{12}\circ L^{1,0}_{23}) \\
\otimes \\
\leftindex^2\Mor(L^{2,0}_{01}\circ L^{2,0}_{12}\circ L^{2,0}_{23},L^{2,0}_{01}\circ L^{2,0}_{12}\circ L^{2,0}_{23})  .
\end{array}
\end{equation}

\item[(vii)]
Throughout, the infinite sums resulting from insertions of the curvature and obstruction terms can be made sense of by working in a Novikov ring (or field) and using the additional property (or algebraic assumption) that the terms $\bmu^{r,a}_{(0,\ldots,0)}$ only contribute in positive energy.
\null\hfill$\triangle$
\end{enumerate}
\end{remark}

Finally, we can state and prove our main result -- a generalization of Proposition~\ref{prop:A-infinity_flow_to_linear} to $(A_\infty,2)$-categories.
That is, we explain how an $(A_\infty,2)$-flow category in any regularization framework gives rise to a linear $(A_\infty,2)$-category -- assuming the framework satisfies the requirements of Definition~\ref{def:framework}.
We then specify to the $(\cC^0,\text{Morse})$-framework of Definition~\ref{def:C0-framework} to prove that this construction satisfies the $(A_\infty,2)$-equations.
Again, we expect this proof to generalize to other regularization frameworks -- using framework-specific analogues of the fact in Lemma~\ref{lem:system of boundary faces induces identity} that a system of boundary faces induces an algebraic identity between push-pull maps.

\begin{theorem}
\label{thm:A-infinity_2_flow_to_linear}
Suppose that $2\sC$ is an $(A_\infty,2)$-flow category
as in Definition~\ref{def:A_infty-2-flow_category}.
Then it gives rise to a $\Lambda$-linear $(A_\infty,2)$-category $2\sC_\lin$ as follows:
\begin{itemize}
\item
The categories of objects and 1-morphisms are the same for $2\sC$ and $2\sC_\lin$.

\smallskip

\item
For $L_{01}, L_{01}' \in \leftindex^1\Mor(M_0, M_1)$, the 2-morphism $\Lambda$-module 
\begin{equation}
\leftindex^2\Mor_{2\sC_\lin}(L_{01},L_{01}') \,\coloneqq\: C_* \leftindex^2\Mor_{2\sC}(L_{01},L_{01}')
\end{equation}
is the chain complex associated by the $(*)$-framework to the $(*)$base space $\leftindex^2\Mor_{2\sC}(L_{01},L_{01}')$.

In the $(\cC^0,\text{Morse})$-framework the morphism $\Lambda$-module  is generated by the finite set $\leftindex^2\Mor_{2\sC}(L_{01},L_{01}')$, 
\begin{equation}
\leftindex^2\Mor_{2\sC_\lin}(L_{01},L_{01}') \,\coloneqq\: \bigoplus_{p\in\leftindex^2\Mor_{2\sC}(L_{01},L_{01}')} \Lambda \, p  .
\end{equation}

\smallskip

\item
For any integers $r \geq 1$, $a \geq 1$, tuples of integers 
$\underline \bn = (\bn^1, \ldots, \bn^a) \in \bigl(\bZ_{\geq0}^r \bigr)^a$, 
and a collection of 1-morphisms $\cL = \bigl(L_{(i-1)i}^{j,k}\bigr)_{(i,j,k)\in I^{\, r,a}_{\underline\bn}}$ as in \eqref{eq:2-collection} we define the $\Lambda$-linear map in \eqref{eq:2-mu-r} by 
\begin{align}
\bmu^{r,a}_{\underline\bn}
\,:\: 
\bigotimes_{(i,j,k) \in {'\!I^{\, r,a}_{\underline\bn}}}
\leftindex^2\Mor_{2\sC_\lin}\bigl(L_{(i-1)i}^{j,k-1},L_{(i-1)i}^{j,k}\bigr)
&\:\to\:
\bigotimes_{\scriptscriptstyle 1\leq j\leq a}
\leftindex^2\Mor_{2\sC_\lin}(L_{01}^{j,0}\circ\cdots L_{(r-1)r}^{j,0},L_{01}^{j,n_1^j}\circ\cdots L_{(r-1)r}^{j,n_r^j})
\nonumber \\
\bigotimes_{(i,j,k) \in {'\!I^{\, r,a}_{\underline\bn}} } c_i^{j,k}  \:  & \:\mapsto\: \:\bigl((\beta^j_{\cL})_{\scriptscriptstyle 1\leq j\leq a}\bigr)_* \bigl( ( a_{\cL}^{i,k})_{(i,j,k) \in {'\!I^{\, r,a}_{\underline\bn}} }\bigr)^*  \bigl( \underset{(i,j,k) \in {'\!I^{\, r,a}_{\underline\bn}} }{\times} c_i^{j,k}  \bigr).
\label{eq:2-push-pull}
\end{align}
In the $(\cC^0,\text{Morse})$-framework, this construction is given by $\Lambda$-linear extension of the map 
$\bmu^{r,a}_{\underline\bn}(\otimes \bP) \coloneqq \sum_{\underline q=(q^1,\ldots,q^a)\in\leftindex^2\Mor^{\rm out}_{2\sC}(\cL)}  \langle \bP , \underline q \rangle \, q^1\otimes \ldots\otimes q^a$ given for $\bP\in\leftindex^2\Mor^{\rm in}_{2\sC}(\cL)$
by counting the $0$-dimensional part $\cX(\cL)_0$ of $\cX(\cL)$ with Novikov coefficients.
That is, we denote 
\begin{align}
\cX(\bP,\underline q)_0 
&  \coloneqq  \cX(\cL)_0\cap \underline\alpha_\cL^{-1}(\bP) \cap \underline\beta_\cL^{-1}(\underline q)  , \qquad
\cX(\bP,\underline q; E)_0 
\coloneqq  \cX(\bP,\underline q)_0  \cap \cE^{-1}(E)
\end{align}
for any $\bP=\bigl( p_i^{j,k}\bigr)_{(i,j,k) \in {'\!I^{\, r,a}_{\underline\bn}} }\in \leftindex^2\Mor^{\rm in}_{2\sC}(\cL)$, $\underline q=(q^j)_{1\leq j \leq a} \in \leftindex^2\Mor^{\rm out}_{2\sC}(\cL)$, $E\in\bR$, and set 
\begin{align} \label{eq:2-count}
\langle \bP , \underline q\rangle
 \: \coloneqq \: 
\#_\Lambda \cX(\bP,\underline q)_0  
 \: \coloneqq \: \textstyle \sum_{l=0}^\infty  \#_{\bZ_2} \cX(\bP,\underline q, E_l)_0 \:  T^{E_l} .
\end{align}
Here $\cE( \cX(\bP,\underline q)_0  ) = \{E_0, E_1, \ldots \}$ is a discrete set with $E_l < E_{l+1}$ and finitely many elements or $\lim_{l\to\infty} E_l = \infty$ as in Remark~\ref{rmk:energies can be ordered}.

\end{itemize}
Moreover, if $2\sC$ is compatible with fiber products for $r = r_c \in \{1,2\}$
in the sense of Definition~\ref{def:compatible}, then $2\sC_\lin$ is compatible with fiber products for
$r = r_c$ in the sense of Definition~\ref{def:linear compatible}.
\end{theorem}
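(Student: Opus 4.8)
The plan is to follow the proof of Proposition~\ref{prop:A-infinity_flow_to_linear} essentially verbatim, upgrading from the single family of boundary decomposition maps to the three types $\varphi^{(\tau)}_{\cdots}$ for $\tau=1,2,3$. The well-definedness of the $2$-morphism $\Lambda$-modules (finiteness of $\leftindex^2\Mor_{2\sC}(L_{01},L_{01}')$) and of the Novikov coefficients $\langle\bP,\underline q\rangle\in\Lambda$ in \eqref{eq:2-count} is identical to the $A_\infty$ case: the energy function on the $0$-dimensional part $\cX(\cL)_0$ is locally constant with compact sublevel sets, so each $\cX(\bP,\underline q;E_l)_0$ is a finite point count and the series lies in the universal Novikov field. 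By $\Lambda$-linearity it suffices to check each $(A_\infty,2)$-equation \eqref{eq:A_infty-2-equations} on generators $\bP$ and a target tuple $\underline q$.

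First I would set up the analogue of the bijection \eqref{eq:bijection}. By property (iii) of Definition~\ref{def:A_infty-2-flow_category}, the collection of all three types of boundary decomposition maps forms a system of boundary faces for $\cX(\cL)$; restricting to the $1$-dimensional part $\cX(\cL)_1$ and intersecting with the fiber $\underline\alpha_\cL^{-1}(\bP)\cap\underline\beta_\cL^{-1}(\underline q)$ (which, the targets being discrete, is a union of connected components) yields a system of boundary faces whose $0$-dimensional faces are indexed by $(\tau,*_\tau)$ and realized as the relevant fibers of the $0$-dimensional fiber products $\cX(\cL')_0\leftindex_{\underline\alpha'}\times_{\underline\beta''}\cX(\cL'')_0$. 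The second step is to expand each of the three sums $*_1,*_2,*_3$ of \eqref{eq:A_infty-2-equations} using the push-pull definition \eqref{eq:2-push-pull}. For $\tau=1,2$ the fiber product is over the outgoing space of $\cL''$, so the nested composition $\bmu_{\cL'}(\cdots\bmu_{\cL''}(\cdots)\cdots)$ unwinds --- via the compatibilities \eqref{eq:type-1-compatibility-with-alpha-beta}--\eqref{eq:type-2-compatibility-with-alpha-beta} of $\varphi^{(\tau)}$ with the evaluation maps and the quotient decomposition of $\leftindex^2\Mor^{\rm in}$ in Remark~\ref{rmk:2-a-b-maps} --- into a Novikov count over the matched pairs $(\chi',\chi'')$ in the fiber product. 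The multiplicativity of the two inner counts across the fiber product is exactly Lemma~\ref{lem:Novikov product}, applied using the energy additivity \eqref{eq:2-additive}. For $\tau=3$ the fiber product is instead over the incoming space of $\cL'$, so the outer operation $\bmu^{1,1}_{(b)}$ acts on the output block; the same unwinding, now using \eqref{eq:type-3-compatibility-with-alpha-beta} and the dual quotient decomposition of $\leftindex^2\Mor^{\rm out}$, identifies this sum with the count over the type-$3$ faces.

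Assembling the three expansions, the right-hand side of \eqref{eq:A_infty-2-equations} evaluated at $(\bP,\underline q)$ equals the total Novikov count of the $0$-dimensional boundary faces of $\cX(\cL)_1\cap\underline\alpha_\cL^{-1}(\bP)\cap\underline\beta_\cL^{-1}(\underline q)$. By Lemma~\ref{lem:system of boundary faces induces identity} this total count vanishes, which is precisely the $(A_\infty,2)$-equation. I expect the main obstacle to be the purely combinatorial bookkeeping of the second step: one must verify, separately for each $\tau$ and each index $*_\tau$ in \eqref{eq:123}, that the reindexing of inputs and outputs produced by push-pull across $\varphi^{(\tau)}$ matches the placement of the nested $\bmu$'s prescribed in \eqref{eq:A_infty-2-equations} together with \eqref{eq:n13}--\eqref{eq:nm2} --- in particular that the type-$3$ composition, which glues on the output rather than the input side, reproduces the third summand with $\bmu^{1,1}_{(b)}$ acting on the $j$-th block.

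Finally, for the compatibility statement, suppose $2\sC$ is compatible with fiber products for $r=r_c\in\{1,2\}$. Since $K_{r_c}$ is a point, the fiber product $\cX(\cL)=\prod^{K_{r_c}}_{1\leq j\leq a}\cX(\cL^j)$ of \eqref{eq:fiber-compatible} is an honest Cartesian product, and the evaluation and forgetful maps split as products. Hence its $0$-dimensional part is the product of the $0$-dimensional parts, and $\cX(\bP,\underline q)_0=\prod_j\cX(\bP^j,q^j)_0$, where $\bP^j,q^j$ denote the $j$-th blocks of $\bP,\underline q$. The energy is additive by \eqref{eq:fiber compatible energy}, so Lemma~\ref{lem:Novikov product} gives $\langle\bP,\underline q\rangle=\prod_j\langle\bP^j,q^j\rangle$. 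This factorization is exactly the tensor-product decomposition of $\bmu^{r_c,a}_{\underline\bn}$ demanded by Definition~\ref{def:linear compatible}, so $2\sC_\lin$ is compatible with fiber products for $r=r_c$.
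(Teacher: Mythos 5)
Your proposal is correct and follows essentially the same route as the paper's own proof: well-definedness of the Novikov coefficients via compact energy sublevels, reduction to generators, expansion of the three sums via push-pull using the evaluation-map compatibilities and energy additivity, identification with the $0$-dimensional boundary faces of $\cX(\cL)_1\cap\underline\alpha_\cL^{-1}(\bP)\cap\underline\beta_\cL^{-1}(\underline q)$ via the system-of-boundary-faces property, and conclusion by Lemma~\ref{lem:system of boundary faces induces identity}, with the fiber-product compatibility handled exactly as you describe via Cartesian products and Lemma~\ref{lem:Novikov product}. The only cosmetic difference is that for the multiplicativity of counts across the fiber product the paper invokes the argument following \eqref{eq:Novikov sum} in the proof of Proposition~\ref{prop:A-infinity_flow_to_linear} rather than citing Lemma~\ref{lem:Novikov product} directly, which is immaterial.
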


\begin{proof}
The proof will be given in the $(\cC^0,\text{Morse})$-framework, relying on the properties in Definitions~\ref{def:C0-framework} and \ref{def:A_infty-2-flow_category} of a regularized $(A_\infty,2)$-flow category in this framework.
We obtain well-defined  $\Lambda$-modules $\bigoplus_{p\in \leftindex^2\Mor_{2\sC}(L_{01},L'_{01})} \Lambda \, p $ since each $\leftindex^2\Mor_{2\sC}(L_{01},L'_{01})$ is a finite set.

To verify that $\langle \bP , \underline q\rangle\in\Lambda$ are well-defined coefficients in the universal Novikov field \eqref{eq:novikov}, we use the fact that by Definition~\ref{def:C0-framework}~(i) -- after restriction to the $0$-dimensional part of the $(\cC^0,\text{Morse})$-moduli space -- the energy function $\cE:\cX(\cL)_0\to\bR$ is locally constant, and $\cE^{-1}((-\infty,E])$ is compact for all $E\in\bR$.
Thus each $\cE^{-1}((-\infty,E])$ can contain only finitely many points, which have finitely many energy values $\cE\bigl(\cE^{-1}((-\infty,E])\bigr) = \{ E_0 < E_1 < \ldots < E_L \}$.
As we increase $E\to\infty$, the ordered list of energy values either ends with finitely many entries or continues with $\lim_{l\to\infty} E_l = \infty$.
Moreover, each $\cX(\bP , \underline q; E_l )_0=\cE^{-1}(E_l)$ is a finite number of points, so has a well-defined count $\#_{\bZ_2} \cX(\bP , \underline q; E_l )_0$ modulo 2.
Thus each coefficient $\langle \bP , \underline q \rangle \coloneqq \sum_{l=0}^\infty  \#_{\bZ_2} \cX(\bP , \underline q; E_l )_0 \:  T^{E_l}\in\Lambda$ in the $n$-ary composition map
is a well-defined expression in \eqref{eq:novikov}.

To verify the $(A_\infty,2)$-equations \eqref{eq:A_infty-2-equations} we first use the fact that the composition operations $\bmu^{r,a}_{\underline\bn}$ are $\Lambda$-linear, so it suffices to check the relations on generators -- for any fixed choice of integers
$r \geq 1, a\geq 1$, $\underline\bn\in \bigl(\bZ_{\geq0}^r\bigr)^a$, 
collections of objects $M_0,\ldots,M_r\in\Ob_{2\sC}$, 
1-morphisms $\cL= \bigl(L_{(i-1)i}^{j,k} \in  \leftindex^1\Mor_{2\sC}(M_{i-1},M_i) \bigr)_{(i,j,k)\in I^{\, r,a}_{\underline\bn} }$ as in \eqref{eq:2-collection}, 
and every tuple of 2-morphisms 
\begin{align*}
\bP \, 
\: = \:
\left(\begin{array}{c}
 \bP^1 \coloneqq \left( p_{i}^{1,k} \right)_{ 1\leq i\leq r , 1\leq k\leq n_i^1} 
\\ \vdots \\ 
 \bP^a \coloneqq  \left( p_{i}^{a,k} \right)_{ 1\leq i\leq r , 1\leq k\leq n_i^a}
\end{array}\right)
\;=\; \left( p^{j,k}_i \in 
\leftindex^2\Mor_{2\sC}\bigl(L_{(i-1)i}^{j,k-1},L_{(i-1)i}^{j,k}\bigr) \right)_{(i,j,k)\in {'\!I^{\, r,a}_{\underline\bn}}} 
\: \in \:
\leftindex^2\Mor^{\rm in}_{2\sC}(\cL)   .
\end{align*}
Given such choices, the $(A_\infty,2)$-equation \eqref{eq:A_infty-2-equations} is -- in the notation of \eqref{eq:123} and Remarks~\ref{rmk:2-a-b-maps} and \ref{rmk:2-arrangements and notation} and denoting $\otimes \bigl( \underline q=(q^j)_{1\leq j \leq a} \bigr) \coloneqq q^1\otimes\ldots\otimes q^a$ -- equivalent to 
\begin{align*}
0\: = \:
&
\sum_{*_1} \bmu_{\underline\bn_{*_1}}^{r,a}
\bigotimes \left(
\begin{array}{c}
\scriptstyle \bP^{[1,j-1]}  \\
\left(   \bP^j_{[1,i-1]} \otimes \bigotimes
\left({ \tiny
\begin{array}{c}
\scriptstyle  \bP_i^{j,[1,s]} \\
\bmu_{(t)}^{1,1}
\left(\scriptstyle  \bP_i^{j,[s+1,s+t]} \right) \\
\scriptstyle  \bP_i^{j,[s+t+1,n_i^j]} \\
\end{array}
}\right) 
\otimes \bP^j_{[i+1,r]} \right) \\
\scriptstyle \bP^{[j+1,a]}  
\end{array}\right)
 \\
&
+\sum_{*_2}
\bmu_{\underline\bn_{*_2} }^{r-t,a}
\left(
 \bP_{[1,s]} \otimes  
\bmu_{\underline\bm_{*_2}}^{t,b^1+\ldots b^a}
\bigl( \bP_{[s+1,s+t]} \bigr)
\otimes   \bP_{[s+t+1,r]} 
\right)
\: + \: 
\sum_{*_3} \left(
\begin{array}{c}
\substack{\scriptscriptstyle \vdots \\ \scriptscriptstyle \id } \Bigr\} \scriptscriptstyle  j-1  
\\
\scriptstyle \bmu^{1,1}_{(b)} \phantom{\int_{bot}^{top}}   \\ 
\substack{\scriptscriptstyle \id \\ \scriptscriptstyle \vdots } \Bigr\} \scriptscriptstyle a-j
\end{array}
\right)
\left( 
\bmu^{r,a+b-1}_{\underline\bn_{*_3}} 
\left(
\bP
\right)
\right) 
 \\
\: = \: &  
\sum_{*_1} \bmu_{\underline\bn_{*_1}}^{r,a}
\bigotimes\left(
\begin{array}{c}
\scriptstyle \bP^{[1,j-1]}  \\
\left( \bP^j_{[1,i-1]} \otimes \bigotimes
\left({\tiny 
\begin{array}{c}
 \scriptstyle \bP_i^{j,[1,s]} \\
{\textstyle \sum_{q\in  \leftindex^2\Mor^{\rm out}( \cL^{(1) \rm out}_{*_1} ) } \#_\Lambda 
\cX\left(\scriptstyle \bP_i^{j,[s+1,s+t]}, q \right) q } \\
\scriptstyle \bP_i^{j,[s+t+1,n_i^j]} \\
\end{array}
}\right) 
\otimes  \bP^j_{[i+1,r]} \right) \\
\scriptstyle \bP^{[j+1,a]}  
\end{array}\right)
 \\
&
+\sum_{*_2}
\bmu_{\underline\bn_{*_2} }^{r-t,a}
\left(
 \bP_{[1,s]} \otimes  
\Bigl( {\textstyle \sum_{\underline q \in  \leftindex^2\Mor^{\rm out}( \cL^{(2) \rm out}_{*_2} ) } \#_\Lambda 
\cX\bigl( \bP_{[s+1,s+t]}  , \underline q \bigr) \otimes\underline q } \Bigr)
\otimes  \bP_{[s+t+1,r]} 
\right)
\\
&
+
\sum_{*_3} \left(
\begin{array}{c} \tiny
\substack{\scriptscriptstyle \vdots \\ \scriptscriptstyle \id } \Bigr\} \scriptscriptstyle j-1 \\
\scriptstyle \bmu^{1,1}_{(b)} \phantom{\int_{bot}^{top}}   \\ 
\substack{\scriptscriptstyle \id \\ \scriptscriptstyle \vdots } \Bigr\} \scriptscriptstyle a-j \\
\end{array}
\right)
\left( 
\sum_{\underline{\tilde q} \in  \leftindex^2\Mor^{\rm out}( \cL^{(3) \rm out}_{*_3} )  }
 \#_\Lambda 
\cX\left( 
\bP
, 
\underline{\tilde q} 
 \right)
\otimes \underline{\tilde q} 
\right)
\end{align*}
Here the second equality holds by construction and $\Lambda$-linearity of the composition operations.
Applying the construction and $\Lambda$-linearity again, this is equivalent to 

\begin{align*}
0 \: = \: & 
\sum_{\substack{*_1 \\ \scriptscriptstyle  q\in \leftindex^2\Mor^{\rm out}( \cL^{(1) \rm out}_{*_1} ) \\ \scriptscriptstyle \underline {\hat q} \in \leftindex^2\Mor^{\rm out}( \cL^{(1) \rm in}_{*_1} ) }   } 
\#_\Lambda \cX\left( 
\left(
\begin{array}{c}
\scriptstyle \bP^{[1,j-1]}  \\
\left( \bP^j_{[1,i-1]}, 
\left({\tiny 
\begin{array}{c}
 \scriptstyle \bP_i^{j,[1,s]} \\
 q  \\
\scriptstyle \bP_i^{j,[s+t+1,n_i^j]} \\
\end{array}
}\right) 
,  \bP^j_{[i+1,r]} \right) \\
\scriptstyle \bP^{[j+1,a]}  
\end{array}\right) 
, 
\underline{\hat q}
\right) 
\cdot
\#_\Lambda  \cX\left(\scriptstyle \bP_i^{j,[s+1,s+t]}, q \right) 
\otimes \underline{\hat q}
 \\
&
+\sum_{\substack{*_2 \\ \scriptscriptstyle   \underline q \in \leftindex^2\Mor^{\rm out}( \cL^{(2) \rm out}_{*_2} ) \\ \scriptscriptstyle \underline {\hat q} \in  \leftindex^2\Mor^{\rm out}( \cL^{(2) \rm in}_{*_2} ) }}
\#_\Lambda \cX\left( 
\left(
\bP_{[1,s]}, 
 \underline q 
,  \bP_{[s+t+1,r]} 
\right)
, 
\underline{\hat q}
 \right)
 \cdot
 \#_\Lambda 
\cX\left( \bP_{[s+1,s+t]}  , \underline q \right)
\otimes \underline{\hat q}
\\
&
+
\sum_{\substack{*_3 \\ \scriptscriptstyle  \underline{\tilde q}  \in \leftindex^2\Mor^{\rm out}( \cL^{(3) \rm out}_{*_3} ) \\  \scriptscriptstyle \hat q \in  \leftindex^2\Mor^{\rm out}( \cL^{(3) \rm in}_{*_3} ) } }
\#_\Lambda 
\cX\left(
( \tilde q^{j+j''})_{0\leq j'' \leq b-1}
, \hat q   
\right) 
\cdot
 \#_\Lambda 
\cX\left( 
 \bP
, 
\underline{\tilde q} 
 \right)
\; 
 \ldots \tilde q^{j-1} \otimes \hat q 
\otimes \tilde q^{j+b}  \ldots 
 \\ 
\: = \: & 
\hspace{-4mm}
\sum_{\substack{*_1 \\  \scriptscriptstyle  \underline {\hat q} \in \leftindex^2\Mor^{\rm out}( \cL) }} 
\hspace{-4mm}
\#_\Lambda  \left[
\cX\left( 
\left(
\begin{array}{c}
\scriptstyle \bP^{[1,j-1]}  \\
\left(  \scriptstyle \bP^j_{[1,i-1]}, 
\left({
\begin{array}{c}
\scriptstyle  \bP_i^{j,[1,s]} \\
\scriptstyle \leftindex^2\Mor^{\rm out}( \cL^{(1) \rm out}_{*_1} )  \\
\scriptstyle \bP_i^{j,[s+t+1,n_i^j]} \\
\end{array}
}\right) 
, \scriptstyle \bP^j_{[i+1,r]} \right) \\
\scriptstyle \bP^{[j+1,a]}  
\end{array}\right) 
, 
\underline{\hat q}
\right) 
\leftindex_{\underline\alpha'}\times_{\underline\beta''}
 \cX\left(\scriptstyle \bP_i^{j,[s+1,s+t]},
 \scriptstyle\leftindex^2\Mor^{\rm out}( \cL^{(1) \rm out}_{*_1} ) \right) 
 \right]
\otimes \underline{\hat q}
 \\
&
+
\hspace{-4mm}
\sum_{\substack{*_2 \\ \scriptscriptstyle  \underline {\hat q} \in  \leftindex^2\Mor^{\rm out}( \cL) }}
\hspace{-4mm}
\#_\Lambda 
\left[
\cX\left( 
\left(
\scriptstyle \bP_{[1,s]}, 
\leftindex^2\Mor^{\rm out}( \cL^{(2) \rm out}_{*_2} )
, \scriptstyle \bP_{[s+t+1,r]} 
\right)
, 
\underline{\hat q}
 \right) 
\leftindex_{\underline\alpha'}\times_{\underline\beta''}
\cX\left(\scriptstyle \bP_{[s+1,s+t]}  , \leftindex^2\Mor^{\rm out}( \cL^{(2) \rm out}_{*_2} )  \right)
\right]
\otimes \underline{\hat q}
\\
&
+
\hspace{-8mm}
\sum_{\substack{*_3 \\  \scriptscriptstyle \underline{\tilde q} \in \leftindex^2\Mor^{\rm out}( \cL^{(3) \rm out}_{*_3} ) / \scriptscriptstyle \leftindex^2\Mor^{\rm in}( \cL^{(3) \rm in}_{*_3} )  \\ \hat q \in  \leftindex^2\Mor^{\rm out}( \cL^{(3) \rm in}_{*_3} ) } }
\hspace{-18mm}
 \#_\Lambda
 \left[
\cX\left(
{\scriptstyle \leftindex^2\Mor^{\rm in}( \cL^{(3) \rm in}_{*_3} )}
, \hat q   
\right) 
\leftindex_{\underline\alpha'}\times_{\underline\beta''}
\cX\left( 
 \bP
, 
\bigl( \ldots ,\tilde q^{j-1}, {\scriptstyle \leftindex^2\Mor^{\rm in}( \cL^{(3) \rm in}_{*_3} ) }, 
 \tilde q^{j+b} , \ldots \bigr)
 \right)
 \right]
\; 
 \ldots \tilde q^{j-1} \otimes \hat q 
\otimes \tilde q^{j+b}  \ldots 
\end{align*}
Here the last equality is of the form 
$$
\textstyle
\sum_{Q\in \leftindex^2\Mor^{\cdots}(\ldots)}
 \#_\Lambda \bigl\{ \chi' \,\big|\, \underline\alpha'(\chi')=Q \bigr\} 
 \cdot \#_\Lambda  \bigl\{\chi''  \,\big|\,  \underline\beta''(\chi'')=Q \bigr\} 
\;=\; 
\#_\Lambda  \left[ \{ \chi' \} \leftindex_{\underline\alpha'}\times_{\underline\beta''}  \{ \chi'' \}  \right] ,
$$
where we define the Novikov count of the fiber product on the right hand side by setting $\cE(\chi',\chi'') \coloneqq \cE(\chi')+\cE(\chi'')$.
This is a general feature of Novikov counts as explained in the proof of Proposition~\ref{prop:A-infinity_flow_to_linear} following \eqref{eq:Novikov sum}.
Next, the last sum over $\underline{\hat q} := \ldots \tilde q^{j-1} \otimes \hat q \otimes \tilde q^{j+b}  \ldots $ for $\underline{\tilde q} \in \leftindex^2\Mor^{\rm out}( \cL^{(3) \rm out}_{*_3} ) / \scriptscriptstyle \leftindex^2\Mor^{\rm in}( \cL^{(3) \rm in}_{*_3} )$ and $\hat q \in  \leftindex^2\Mor^{\rm out}( \cL^{(3) \rm in}_{*_3} )$ is in fact a sum over $\underline{\hat q} \in  \leftindex^2\Mor^{\rm out}( \cL)$ by Remark~\ref{rmk:2-a-b-maps}.
So \eqref{eq:A_infty-2-equations} is equivalent to the following identity for all tuples $\bP\in \leftindex^2\Mor^{\rm in}_{2\sC}(\cL)$ and $\underline{\hat q} \in  \leftindex^2\Mor^{\rm out}_{2\sC}( \cL)$
\begin{align*}
0 \: = \: & 
\sum_{*_1} 
\#_\Lambda  
\left[
\cX\left( 
\left(
\begin{array}{c}
\scriptstyle \bP^{[1,j-1]}  \\
\left(  \scriptstyle \bP^j_{[1,i-1]}, 
\left({
\begin{array}{c}
\scriptstyle  \bP_i^{j,[1,s]} \\
\scriptstyle \leftindex^2\Mor^{\rm out}( \cL^{(1) \rm out}_{*_1} )  \\
\scriptstyle \bP_i^{j,[s+t+1,n_i^j]} \\
\end{array}
}\right) 
, \scriptstyle \bP^j_{[i+1,r]} \right) \\
\scriptstyle \bP^{[j+1,a]}  
\end{array}\right) 
, 
\underline{\hat q}
\right) 
\leftindex_{\underline\alpha'}\times_{\underline\beta''}
 \cX\left(\scriptstyle \bP_i^{j,[s+1,s+t]},
 \scriptstyle\leftindex^2\Mor^{\rm out}( \cL^{(1) \rm out}_{*_1} ) \right) 
\right]
 \\
&
+
\sum_{*_2 }
\#_\Lambda 
\left[
\cX\left( 
\left(
\scriptstyle \bP_{[1,s]}, 
\leftindex^2\Mor^{\rm out}( \cL^{(2) \rm out}_{*_2} )
, \scriptstyle \bP_{[s+t+1,r]} 
\right)
, 
\underline{\hat q}
 \right) 
\leftindex_{\underline\alpha'}\times_{\underline\beta''}
\cX\left(\scriptstyle \bP_{[s+1,s+t]}  , \leftindex^2\Mor^{\rm out}( \cL^{(2) \rm out}_{*_2} )  \right)
\right]
\\
&
+
\sum_{*_3}
 \#_\Lambda
 \left[
\cX\left(
{\scriptstyle \leftindex^2\Mor^{\rm in}( \cL^{(3) \rm in}_{*_3} )}
, \hat q   
\right) 
\leftindex_{\underline\alpha'}\times_{\underline\beta''}
\cX\left( 
 \bP
, 
\bigl( \ldots ,\tilde q^{j-1}, {\scriptstyle \leftindex^2\Mor^{\rm in}( \cL^{(3) \rm in}_{*_3} ) }, 
 \tilde q^{j+b} , \ldots \bigr)
 \right) 
 \right]
\end{align*}
Now we use the facts that each boundary decomposition map intertwines the evaluation maps as specified in \eqref{eq:type-1-compatibility-with-alpha-beta}-- \eqref{eq:type-3-compatibility-with-alpha-beta}, and that the energy is additive $\cE(\varphi^{(\tau)}_{\cdots}(\chi',\chi''))=\cE(\chi')+\cE(\chi'')$ under boundary decomposition maps by Definition~\ref{def:A_infty-2-flow_category}~(ii).
This shows that \eqref{eq:A_infty-2-equations} is equivalent to
\begin{align}
\: = \: & 
\sum_{*_1} 
\#_\Lambda \; 
\varphi^{(1)}_{*_1}\left( 
\cX(\cL^{(1) \rm in}_{*_1})_0 \:\leftindex_{\underline{\alpha}'} \times_{\underline\beta''} \: \cX(\cL^{(1) \rm out}_{*_1})_0 \right)
\cap \underline\alpha_\cL^{-1}(\bP) \cap \underline\beta_\cL^{-1}(\underline{\hat q})
\label{eq:2-equations-to-prove} \\
&
+
\sum_{*_2} 
\#_\Lambda \; 
\varphi^{(2)}_{*_2}\left( 
\cX(\cL^{(2) \rm in}_{*_2})_0 \:\leftindex_{\underline{\alpha}'} \times_{\underline\beta''} \: \cX(\cL^{(2) \rm out}_{*_2})_0 \right)
\cap \underline\alpha_\cL^{-1}(\bP) \cap \underline\beta_\cL^{-1}(\underline{\hat q})
\nonumber \\
&
+
\sum_{*_3} 
\#_\Lambda \; 
\varphi^{(3)}_{*_3}\left( 
\cX(\cL^{(3) \rm in}_{*_3})_0 \:\leftindex_{\underline{\alpha}'} \times_{\underline\beta''} \: \cX(\cL^{(3) \rm out}_{*_3})_0 \right)
\cap \underline\alpha_\cL^{-1}(\bP) \cap \underline\beta_\cL^{-1}(\underline{\hat q}) .
  \nonumber \\
\: = \: & 
\: \#_\Lambda \; \partial \cX(\cL)_1 \cap \underline\alpha_\cL^{-1}(\bP) \cap \underline\beta_\cL^{-1}(\underline{\hat q})
\: = \: 
\#_\Lambda \; \partial \bigl( 
\cX(\cL)_1 \cap \underline\alpha_\cL^{-1}(\bP) \cap \underline\beta_\cL^{-1}(\underline{\hat q}) \bigr).
\nonumber
\end{align}
Here the last step follows from the bijection \eqref{eq:2-bijection}, which we will establish below.
Now the $(A_\infty,2)$-equations -- which we showed to be equivalent to \eqref{eq:2-equations-to-prove} -- hold by Lemma~\ref{lem:system of boundary faces induces identity} applied to the system of boundary faces in \eqref{eq:2-bijection}.
To establish the last step in \eqref{eq:2-equations-to-prove} we will utilize Definition~\ref{def:A_infty-2-flow_category}~(iii), which ensures that the collection of boundary decomposition maps $\varphi^{(\tau)}_{\cdots}$ for $\tau=1,2,3$ forms a system of boundary faces for the $\cC^0$-manifold $\cX(\cL)$ with boundary.
Spelling out Definition~\ref{def:mfds_with_faces} and specifying to the component $\cX(\cL)_1$ of dimension $1$ as in Definition~\ref{def:mfds_with_faces} and the proof of Proposition~\ref{prop:A-infinity_flow_to_linear}, this system of boundary faces implies the following: 
\begin{enumerate}
\item[(a)]
Each boundary decomposition map is a $\cC^0$-embedding 
\begin{equation} 
\varphi^{(\tau)}_{*_\tau}
\: \colon \quad
\cX(\cL'\coloneqq\cL^{(\tau) \rm in}_{*_\tau})_0 \:\leftindex_{\underline{\alpha}'} \times_{\underline\beta''} \: \cX(\cL''\coloneqq\cL^{(\tau) \rm out}_{*_\tau})_0 \quad \longrightarrow \quad \partial \cX(\cL)_1 .
\end{equation}
\item[(b)]
Each point of $\partial\cX(\cL)_1$ lies in the image of exactly one boundary decomposition map $\varphi^{(\tau)}_{*_\tau}$.
\end{enumerate}
Thus the collection of maps $\varphi^{(\tau)}_{*_\tau}$ restricted to the $0$-dimensional components of their domains  induces a bijection -- which can again be viewed as a system of boundary faces for $\partial\cX(\cL)_1$, 
\begin{align}
\label{eq:2-bijection}
\bigsqcup_{\tau=1,2,3} \; \bigsqcup_{*_\tau} \; 
\cX(\cL^{(\tau) \rm in}_{*_\tau})_0 \:\leftindex_{\underline{\alpha}'} \times_{\underline\beta''} \: \cX(\cL^{(\tau) \rm out}_{*_\tau})_0
\quad\overset{\sim}{\longrightarrow}\quad
\partial\cX(\cL)_1.
\end{align}
This proves the $(A_\infty,2)$-equations -- equivalent to \eqref{eq:2-equations-to-prove} -- via Lemma~\ref{lem:system of boundary faces induces identity} applied to this bijection.

Finally, assume that the $(A_\infty,2)$-flow category $2\sC$ is compatible with fiber products for
$r = r_c \in \{1,2\}$ in the sense of Definition~\ref{def:compatible} and \eqref{eq:fiber compatible energy} for the $(\cC^0,\text{Morse})$-framework.
In case $r_c=1$ this means that 
for any $M_0,M_1\in\Ob$, 
integers $a \geq 2$, $\underline\bn=\bigl(\bn^1=(n^1_1), \ldots, \bn^a=(n^a_1) \bigr) \in \bigl(\bZ_{\geq0}\bigr)^a$, 
and 1-morphisms 
\begin{equation} 
\cL \: = \:
\left(\begin{array}{c}
 \cL^1 = \left( L_{01}^{1,k} \right)_{  0\leq k\leq n_1^1} 
\\ \vdots \\ 
\cL^a = \left( L_{01}^{a,k} \right)_{ 0\leq k\leq n_1^a}
\end{array}\right)
\qquad
\text{we have}
\qquad
\begin{array}{rl} 
\textstyle \cX(\cL) & = \: \cX(\cL^1)\times \ldots\times \cX(\cL^a) ,  \\
\underline\alpha_\cL  &= \: \underline\alpha_{\cL^1}\times \ldots \times \underline\alpha_{\cL^a}  \\ 
\underline\beta_\cL  &= \: \underline\beta_{\cL^1}\times \ldots \times \underline\beta_{\cL^a} \\
p_\cL & = \: p_{\cL^1}\times \ldots \times p_{\cL^a} \\
\cE(\chi^1 \ldots \chi^a)& =\: \cE(\chi^1)+\ldots+\cE(\chi^a)  .
\end{array} 
\end{equation}
In case $r_c=2$ the compatibility ensures that the same holds for any 
$M_0,M_1,M_2 \in\Ob$, integers $\underline\bn=\bigl(\bn^1=(n^1_1,n^1_2), \ldots, \bn^a=(n^a_1,n^a_2) \bigr) \in \bigl(\bZ_{\geq0}^2\bigr)^a$ and 1-morphisms 
$\cL = \bigl( \cL^1 , \ldots, \cL^a \bigr)$ consisting of 
$\cL^j = \bigl( L_{(i-1)i}^{j,k} \bigr)_{  1\leq i \leq 2, 0\leq k\leq n_i^j}$.
This compatibility implies that for any generators $\bP=(\bP^1,\ldots,\bP^a)\in \leftindex^2\Mor^{\rm in}_{2\sC}(\cL)$ and $\underline q=(q^1,\ldots,q^a)\in \leftindex^2\Mor^{\rm out}_{2\sC}(\cL)$ we have  
$\cX(\bP,\underline q)  =  \cX(\bP^1, q^1) \times \ldots\times \cX(\bP^a, q^a)$.
Since dimensions in Cartesian products are additive and negative dimensional components are empty, we also obtain a product identity for the $0$-dimensional component
$\cX(\bP,\underline q)_0  =  \cX(\bP^1, q^1)_0 \times \ldots\times \cX(\bP^a, q^a)_0$.
Furthermore, since the energy is additive, the Novikov count of this Cartesian product is the product of Novikov counts by Lemma~\ref{lem:Novikov product}, 
\begin{equation}
\#_\Lambda \bigl( \cX(\bP^1, q^1)_0 \times \ldots\times \cX(\bP^a, q^a)_0 \bigr) 
\: = \: 
\#_\Lambda \cX(\bP^1, q^1)_0 \cdot \ldots\cdot \#_\Lambda \cX(\bP^a, q^a)_0  .
\end{equation}
With that we can establish the algebraic compatibility of Definition~\ref{def:linear compatible} on any tuple of generators $\bP=(\bP^1 , \ldots, \bP^a)$ 
\begin{align}
& \bmu^{r,a}_{\underline\bn} 
\left( \otimes \bP = 
\bigotimes \left(
\begin{array}{c}
\otimes \bP^1  \\
\vdots \\
\otimes \bP^a  \\
\end{array}
\right)
\right) 
\nonumber\\
\: & =  \: 
\sum_{\underline q=(q^1,\ldots,q^a)\in\leftindex^2\Mor^{\rm out}_{2\sC}(\cL)}  \#_\Lambda \cX(\bP,\underline q)_0    \; q^1\otimes \ldots\otimes q^a 
 \\ 
 \: & =  \: 
\sum_{q^1\in\leftindex^2\Mor^{\rm out}_{2\sC}(\cL^1)} \ldots \sum_{q^a\in\leftindex^2\Mor^{\rm out}_{2\sC}(\cL^a)}  \#_\Lambda \cX(\bP^1,q^1)_0 \cdot \ldots \cdot  \#_\Lambda \cX(\bP^a, q^a)_0   \; q^1\otimes \ldots\otimes q^a 
\nonumber \\ 
 \: & =  \: 
\left(\sum_{q^1\in\leftindex^2\Mor^{\rm out}_{2\sC}(\cL^1)} \#_\Lambda \cX(\bP^1,q^1)_0 \; q^1 \right) \otimes \ldots \otimes  \left( 
\sum_{q^a\in\leftindex^2\Mor^{\rm out}_{2\sC}(\cL^a)} 
\#_\Lambda \cX(\bP^a, q^a)_0   \; q^a  \right)
\nonumber \\ 
\: & =  \: 
\bmu^{r,1}_{\bn^1}\bigl( \bP^1\bigr)  \otimes
\ldots \otimes
\bmu^{r,1}_{\bn^a}\bigl( \bP^a \bigr) 
\: = \: 
\bigotimes
\left(
\begin{array}{c}
\bmu^{r,1}_{\bn^1}\bigl( \bP^1\bigr)  \\
\vdots \\
\bmu^{r,1}_{\bn^a}\bigl( \bP^a \bigr)  \\
\end{array}
\right).
\nonumber
\end{align} 
\end{proof}

\begin{remark}
\begin{enumerate}
\item[(i)]
As in Remark~\ref{rmk:characteristic and regularization}, 
Lemma~\ref{lem:system of boundary faces induces identity} relies on the coefficient ring (or field) $\Lambda$ having characteristic 2.
We could drop this hypothesis at the expense of introducing signs into the $(A_\infty,2)$-equations, and would need to assume that the morphism spaces in the $(A_\infty,2)$-flow category carry orientations that are taken into account in the counting \eqref{eq:2-count}
and are compatible with the bijection \eqref{eq:2-bijection}.
\item[(ii)]
When constructing algebraic structures by counting pseudoholomorphic quilts, a key step is to regularize the relevant moduli spaces.
This step is invisible in the proof of Theorem~\ref{thm:A-infinity_2_flow_to_linear} as the regularization step is part of constructing a regularized $(A_\infty,2)$-flow category in an appropriate regularization framework.
Once this is done, extracting a linear $(A_\infty,2)$-category is essentially a formal procedure.\null\hfill$\triangle$
\end{enumerate}
\end{remark}

\bibliographystyle{alpha}
\small
\bibliography{biblio}

\end{document}